\newtheorem{theorem}{Theorem}
\newtheorem{lemma}{Lemma}
\newtheorem{corollary}{Corollary}
\theoremstyle{definition}
\newtheorem{definition}{Definition}[section]
\newtheorem{remark}{Remark}[section]
\newtheorem{assumption}{Assumption}[section]
\newcommand{\llbrace}{\lbrace\hspace{-0.15cm}\lbrace}
\newcommand{\rrbrace}{\rbrace\hspace{-0.15cm}\rbrace}
\newcommand{\norm}[1]{\left\lVert#1\right\rVert}
\newcommand{\trinorm}[1]{{\left\vert\kern-0.25ex\left\vert\kern-0.25ex\left\vert #1 
		\right\vert\kern-0.25ex\right\vert\kern-0.25ex\right\vert}}
\newcommand{\myred}[1]{\textcolor{black}{#1}}
\newcommand{\rred}[1]{\textcolor{black}{#1}}
\newcommand*\circled[1]{\tikz[baseline=(char.base)]{
    \node[shape=circle, draw, inner sep=1pt, 
        minimum height=12pt] (char) {#1};}}
\newcommand{\MB}[1]{\textcolor{black}{#1}}
\newcommand{\RRR}[1]{\textcolor{black}{#1}}
\newcommand{\kint}{\tau}
\newcommand{\HH}{\mathbb{H}}
\newcommand{\UU}{\mathfrak{\bm U}}
\newcommand{\VV}{\mathfrak{\bm V}}
\begin{document}
\title{\sc A high-order discontinuous Galerkin method for the poro-elasto-acoustic problem on polygonal and polyhedral grids} 
\author{P.F.~Antonietti$^{1}$, M.~Botti$^{2}$, I.~Mazzieri$^{3}$, S.~Nati Poltri$^{4}$}

\maketitle 
\begin{center}
	{\small $^1$ MOX, Dipartimento di Matematica, Politecnico di Milano, Italy.\\ {\tt paola.antonietti@polimi.it}}\\
	{\small $^2$ MOX, Dipartimento di Matematica, Politecnico di Milano, Italy. \\ {\tt michele.botti@polimi.it}}\\
	{\small $^3$ MOX, Dipartimento di Matematica, Politecnico di Milano, Italy. \\ {\tt ilario.mazzieri@polimi.it}}\\
	{\small $^4$ MOX, Dipartimento di Matematica, Politecnico di Milano, Italy. \\ {\tt simone.nati@mail.polimi.it}}\\
\end{center}--
\date{}

\noindent
{\bf Keywords}: poroelasticity; acoustics; interface conditions; discontinuous Galerkin method; convergence analysis
\vspace*{0.5cm}
\noindent

\begin{abstract}
The aim of this work is to introduce and analyze a  finite element discontinuous Galerkin method on polygonal meshes for the numerical discretization of acoustic waves propagation through poroelastic materials. Wave propagation is modeled by the acoustics equations in the acoustic domain and the low-frequency Biot's equations in the poroelastic one. The coupling is \MB{realized by means of} (physically consistent) transmission conditions, imposed on the interface between the domains, modeling \MB{different pores configurations}.
For the space discretization we introduce and analyze a high-order discontinuous Galerkin method on polygonal and polyhedral meshes, which is then coupled with Newmark-$\beta$ time integration schemes. A stability analysis for both the continuous and  semi-discrete problem is presented and error estimates for the energy norm are derived for the semi-discrete one. A wide set of numerical results obtained on test cases with manufactured solutions are presented in order to validate the error analysis. Examples of physical interest are also presented to \MB{investigate} the capability of the proposed methods in practical \MB{scenarios}.
\end{abstract}

\section{Introduction}
The paper deals with the numerical analysis of the coupled poro-elasto-acoustic  differential problem modeling an acoustic/sound wave impacting a poroelastic medium and consequently propagating through it.  Coupled poro-elasto-acoustic problems model the combined propagation of pressure and elastic waves through a porous material. Pressure waves propagate through the saturating fluid inside pores, while acoustic ones through the porous skeleton. The theory of propagation of acoustic waves with application to poroelasticity has been developed mainly by Biot \cite{biot1941general} in 1956, by introducing general equations and proposing different ways to treat coupling between acoustic and poro-elastic domains. Pioneering advances of Biot's theory concerned with slow compressional waves, whose study carried on the analysis on fast compressional waves, introduced in 1944 by Frenkel. 
Coupled poro-elasto-acoustic models find  application in many science and engineering fields. For example, in acoustic engineering,  for the study of sound propagation through acoustic panels, whose main intent is to intercept and absorb acoustic waves for noise reduction \cite{TKWF2010};  in civil engineering, for the study of passive control and vibroacoustics, where plastic foams and fibrous or granular materials are mainly used with this intent \cite{krishnan}; in aeronautical engineering, where air-saturated porous materials are employed \cite{castagnede1998ultrasonic}; in biomedical engineering,  for the study of ultrasound propagation throughout bones to diagnose osteoporosis and study its evolution \cite{HAIRE1999291} and to model soft tissues deformation, such as the heart tissue \cite{huyghe1991two}, the skin \cite{oomens1987mixture} and the aortic tissue \cite{jayaraman1983water}.   
Poro-elasto-acoustic models find a wide strand of literature also in computational geosciences: we refer the reader to \cite{carcione2014book} for a comprehensive review.

In order to model the poroelastic domain, the concept of \textit{pores} is necessary.  \textit{Pores} can be seen as "holes" in the material where a fluid is able to move. They can be classified into \textit{open}, \textit{sealed}, and \textit{imperfect} pores: the first ones share a part with the outer surface of the material, the second ones are totally locked in, while the latter ones represent an itermediate state between the former two, as shown in Figure~\ref{fig::pores} below. From the modeling viewpoint, the difference between them is the way in which interface conditions are formulated, as detailed later on. 

Concerning the numerical discretization of poro-elasto-acoustic models, we mention the Lagrange Multipliers  method \cite{rockafellar1993lagrange,zunino,Flemisch2006}, the finite element method \cite{BERMUDEZ200317,FKTW2010}  the spectral and pseudo-spectral element method \cite{Morency2008,Sidler2010}, the ADER scheme \cite{delapuente2008,chiavassa_lombard_2013}, the finite difference method \cite{LOMBARD200490}, and references therein.

To accurately simulate wave propagation in coupled poro-elasto-acoustic domains the numerical scheme should take into account the following observations: (i) in the low-frequency range the evolution problem become stiff \cite{delapuente2008}, and therefore, explicit time integration schemes might become computationally too demanding due to the strict stability constraint; (ii) the diffusive slow compressional waves are localized near the interfaces, and therefore, mesh refinements are needed to capture the phenomenon; (iii) an accurate geometrical description of the arbitrary complex interfaces is crucial; (iv) a proper representation of the hydraulic contact at the interfaces is also mandatory to correctly capture the physics of the problem. 

By taking into consideration the aforementioned difficulties, the aim of this paper is to propose and analyze a high-order discontinuous Galerkin method on polygonal and polyhedral grids (PolyDG) for the space discretization of a coupled poroelasto-acoustic  problem, by extending  the theory carried out in \cite{bonaldi}, where a coupled system of elasto-acoustic equations is analyzed. 
We point out that the geometric flexibility due to mild regularity requirements on the underlying computational mesh together with the arbitrary-order accuracy featured by the proposed PolyDG method are crucial within this context as they ensure at the same time a high-level of flexibility in the representation of the geometry and an intrinsic high-level of precision and scalability that are mandatory to correctly represent the solution fields. Moreover, in the proposed semi-discrete formulation, the coupling between the acoustic and the poroelastic domains is introduced by considering (physically consistent) interface conditions, naturally incorporated in the scheme. 

For early results in the field of dG methods we refer, for example, to \cite{BaBoCoDiPiTe2012,AntoniettiGianiHouston_2013,cangiani2014hp,CangianiDongGeorgoulisHouston_2016,CongreveHouston2019,cangiani2020hpversion}
for second-order elliptic problems problems, to \cite{CangianiDongGeorgoulis_2017} for 
parabolic differential equations, to \cite{AntoniettiFacciolaRussoVerani_2019} for flows in fractured porous media, to \cite{AntoniettiVeraniVergaraZonca_2019}  for fluid structure interaction problems, cf. also \cite{CangianiDongGeorgoulisHouston_2017} for a comprehensive monograph.
In the framework of dG methods for hyperbolic problems we mention \cite{riviere2003discontinuous,GrScSc06} for scalar wave equation on simplex grids, while more recent dG discretizations on polytopic meshes can be found in  
\cite{AntoniettiMazzieri2018} for elastodynamics problems, in \cite{AntoniettiMazzieriMuhrNikolicWohlmuth_2020} for non-linear sound waves and in \cite{bonaldi,AntoniettiBonaldiMazzieri_2019b} for coupled elasto-acoustic problems.
To the best of our knowledge, the present approach is proposed and analyzed here for the first time in the context of multiphysics poroelasto-acoustic problems, and it provides a flexible and accurate scheme that can be employed in real applications. 

 The remaining part of the paper is structured as follows: in Section \ref{sec::physical} we introduce the mathematical model, present the weak formulation of the problem, and prove suitable stability estimates. In Section \ref{sec::numerical} we introduce the PolyDG approximation and prove its stability. Section \ref{sec::errors} is devoted to the analysis of the semi-discrete problem and the proof of $hp-$version \textit{a-priori} error estimates. The time integration schemes are introduced in Section \ref{sec::timedis}. In Section \ref{sec::results} we present some two-dimensional numerical experiments to validate the theoretical results and show the performances of the proposed method in examples of physical interest. Finally, in Section \ref{sec::conclusions} we draw some conclusions. The existence and uniqueness for the strong formulation of the problem and additional technical results are established in Appendix \ref{appendix}.

\section{The physical model and governing equations}\label{sec::physical}

Let $\Omega\subset\mathbb{R}^d$, $d=2,3$, be an open, convex polygonal/polyhedral  domain  decomposed as the union of two \MB{disjoint}, polygonal/polyhedral subdomains: $\Omega=\Omega_p\cup\Omega_a$, representing the poroelastic and the acoustic domains, respectively, cf. Figure~\ref{fig::domain}. The two subdomains share part of their boundary, resulting in the interface $\Gamma_I=\partial\Omega_p\cap\partial\Omega_a$. 
\begin{figure}
\caption{(\ref{fig::pores}) Pores classification in a poroelastic domain:  \textit{sealed} (1), \textit{open} (2) and \textit{imperfect} (3) pores.
(\ref{fig::domain}) Simplified graphic representation of the domain $\Omega=\Omega_p\cup\Omega_a$ for $d=2$.}
 \subfloat[Pores classification in a poroelastic domain. \label{fig::pores}]{\includegraphics[scale=0.3]{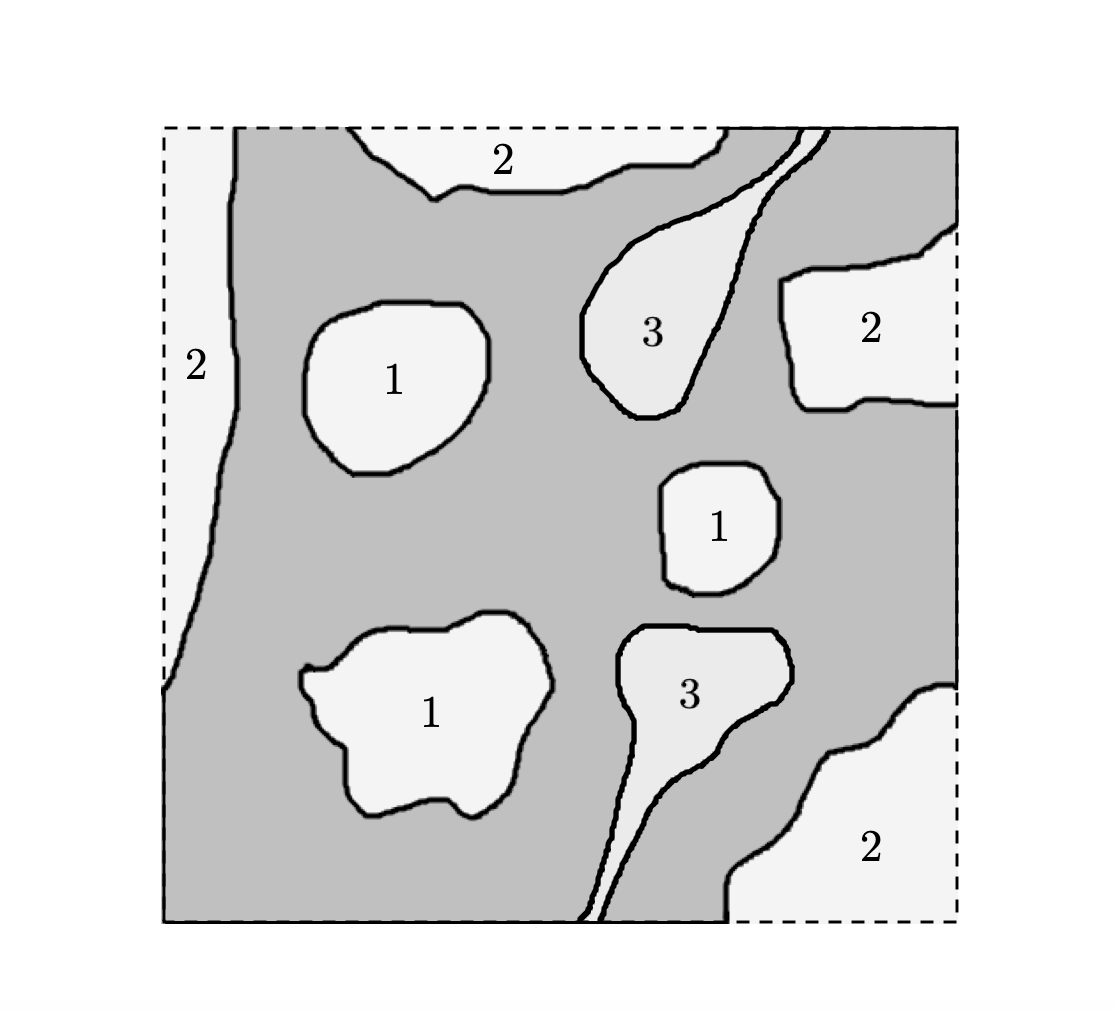}}
\subfloat[$\Omega=\Omega_p\cup\Omega_a$. \label{fig::domain}]{\includegraphics[scale=0.4]{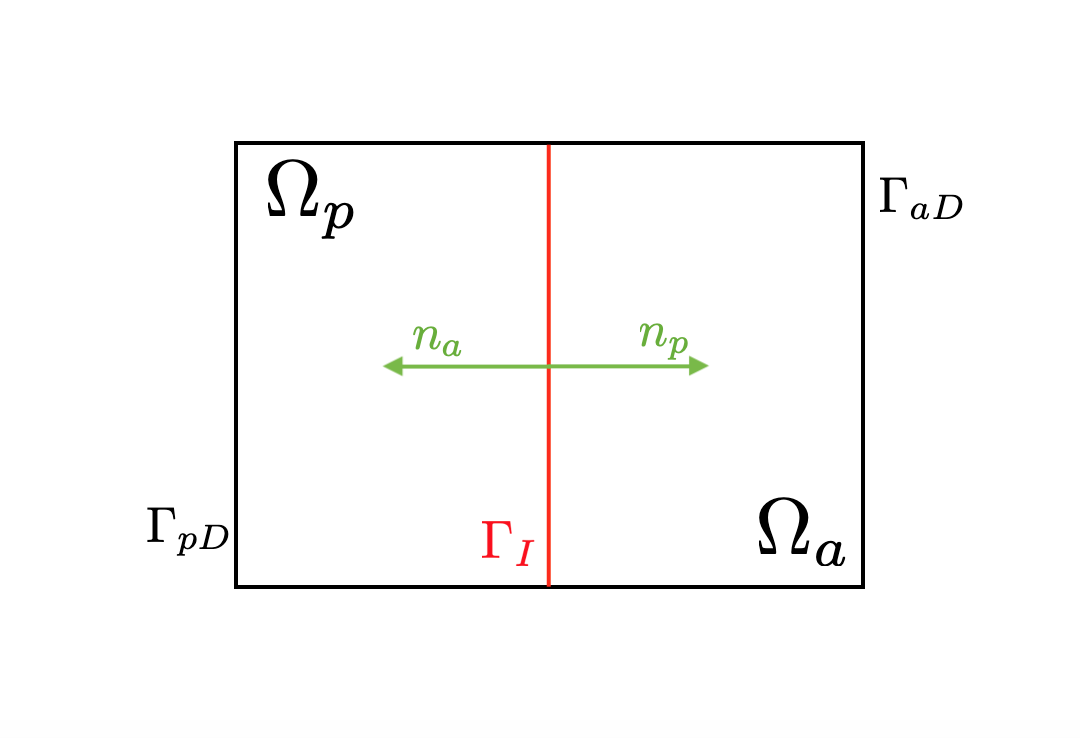}}
\end{figure}
The boundary of $\Omega$ is denoted by $\partial\Omega$, and we set $\partial\Omega_p=\Gamma_{pD}\cup\Gamma_I$ and $\partial\Omega_a=\Gamma_{aD}\cup\Gamma_I$, with $\Gamma_{pD}\cap\Gamma_I=\emptyset$ and $\Gamma_{aD}\cap\Gamma_I=\emptyset$. Surface measures of $\partial\Omega$, $\partial\Omega_p$, $\partial\Omega_a$ and $\Gamma_I$ are assumed to be strictly positive. The outer unit normal vectors to $\partial\Omega_p$ and $\partial\Omega_a$ are denoted by $\bm{n}_p$ and $\bm{n}_a$, respectively, so that $\bm{n}_p=-\bm{n}_a$ on $\Gamma_I$.
In the following, for $X\subseteq\Omega$, the notation $\bm{L}^2(X)$ is adopted in place of $[L^2(X)]^d$, with $d\in\{2,3\}$. The scalar product \MB{in $L^2(X)$} is denoted by $(\cdot,\cdot)_X$, with associated norm $\norm{\cdot}_X$.  Similarly, $\bm{H}^\ell(X)$ is defined as $[H^\ell(X)]^d$, with $\ell\geq 0$, equipped with the norm $\norm{\cdot}_{\ell,X}$, assuming conventionally that $\bm{H}^0(X)\equiv\bm{L}^2(X)$. \RRR{In addition we will use $\bm{H}(\textrm{div},X)$ to denote the space of $\bm{L}^2(X)$ functions with  square integrable divergence. In order to take into account essential boundary conditions, we also introduce the zero-trace subspaces, defined as  
$$
\begin{aligned}
H^1_0(\Omega_a) &= \{ \psi\in H^1(\Omega_a) \,|\, \psi_{|\Gamma_{aD}} = 0\}, \\
\bm{H}^1_0(\Omega_p) &= \{ \bm{v}\in \bm{H}^1(\Omega_p) \,|\, \bm{v}_{|\Gamma_{pD}} = \bm{0}\}, \\
\bm{H}_0(\textrm{div},\Omega_p) &= \{ \bm{z}\in \bm{H}(\textrm{div},\Omega_p) \,|\, (\bm{z}\cdot\bm{n}_p)_{|\Gamma_{pD}} = 0\}.
\end{aligned}
$$}
Given $k\in\mathbb{N}$ and a Hilbert space $\mathbb{H}$, the usual notation $C^k([0,T];\mathbb{H})$ is adopted for the space of \MB{$\mathbb{H}$-valued functions, $k$-times continuously differentiable in $[0,T]$}. 
The notation $x\lesssim y$ stands for $x\leq C y$, with $C>0$, independent of the discretization parameters, but possibly dependent on physical coefficients and the final time $T$.
\subsection{The poro-elasto-acoustic problem}
To model wave propagation in a poro-elastic domain $\Omega_p$ we consider the \textit{two-displacement} formulation of \cite{matuszy}, written in the solid and filtration displacements, denoted by $\bm{u}$ and $\bm{w}$, respectively. For a final observation time $T>0$, we consider the low-frequency Biot's equations:
\begin{equation}
\begin{cases}
\rho\ddot{\bm{u}} + 
\rho_f\ddot{\bm{w}}
-\nabla\cdot\bm{\sigma}=\bm{f}_p,  &\text{in }\Omega_p\times(0,T],\\[5pt]
\rho_f\ddot{\bm{u}} + 
\rho_w\ddot{\bm{w}} + 
\frac{\eta}{k}\dot{\bm{w}}
+\nabla p=\bm{g}_p,  &\text{in }\Omega_p\times(0,T].
\end{cases}
\label{eq::poroel}
\end{equation}
Here, the average density $\rho$ is given by $\rho=\phi\rho_f+(1-\phi)\rho_s$, where $\rho_s>0$ is the solid density, $\rho_f>0$ is the saturating fluid density, 
$\rho_w$ is defined as $\rho_w=\frac{a}{\phi}\rho_f$, being $\phi$ the \textit{porosity} satisfying $0<\phi_0\leq\phi\leq\phi_1<1$, and being $a>1$ the \textit{tortuosity} measuring the deviation of the fluid paths from straight streamlines, cf. \cite{souzanchi2013tortuosity}. In \eqref{eq::poroel}, $\eta>0$ represents the dynamic \textit{viscosity} of the fluid and $k>0$ is the absolute \textit{permeability}. 
\begin{remark}
As observed in \cite{chiavassa_lombard_2013}, the second equation in \eqref{eq::poroel} is valid under a constraint on frequencies, i.e. the spectrum of the waves has to lie in the low-frequency range. In what follows, we only consider frequencies lower than $f_c=\eta\phi/(2\pi a k \rho_f)$.
\end{remark}
In $\Omega_p$,  we assume the following constitutive laws for the stress $\bm{\sigma}$ and pressure $p$:
\begin{align}
& \bm{\sigma}(\bm{u},p)= \mathbb{C}:\bm{\epsilon}(\bm{u})
-\beta p \bm{I}, 
&& p(\bm{u},\bm{w}) = -m(\beta \nabla\cdot\bm{u}+\nabla\cdot \bm{w}),
\label{eq::const_sigma_press}
\end{align}
where the strain tensor $\bm \epsilon(\cdot)$ is defined as $\bm{\epsilon}(\bm{u})=\frac{1}{2}(\nabla\bm{u}+\nabla\bm{u}^T)$, and $\mathbb{C}$ is the fourth-order, symmetric and uniformly elliptic elasticity tensor \MB{defined by
 \begin{equation*}
\mathbb{C}:\bm{\tau} = 2\mu \bm{\tau} + \lambda \rm{tr}(\bm\tau), \qquad\text{for all } \bm{\tau}\in\mathbb{R}^{d\times d},
\end{equation*}
with $\rm{tr}(\bm{\tau}) = \sum_{i=1}^d \bm{\tau}_{ii}$.
}
Here, \MB{$\lambda\ge0$ and $\mu\ge\mu_0>0$} are the Lam\'e coefficients of the elastic skeleton. In \ref{eq::const_sigma_press}, the Biot--Willis coefficient $\beta$ and Biot modulus $m$ are such that $\phi<\beta\le1$ and $m\ge m_0>0$. It can be shown that the dilatation coefficients of the saturated matrix corresponds to $\lambda_f=\lambda+\beta^2m$.
By plugging the constitutive laws \eqref{eq::const_sigma_press} into \eqref{eq::poroel}, we obtain the \textit{two-displacement} formulation
\begin{equation}
\begin{cases}
\rho\ddot{\bm{u}} + 
\rho_f\ddot{\bm{w}}
-\nabla\cdot(\mathbb{C}:\bm{\epsilon}(\bm{u}))-\beta^2m\nabla(\nabla\cdot\bm{u})-\beta m \nabla(\nabla\cdot\bm{w})=\bm{f}_p, \\
\rho_f\ddot{\bm{u}} + 
\rho_w\ddot{\bm{w}} + 
\frac{\eta}{k}\dot{\bm{w}}
-\beta m\nabla(\nabla\cdot\bm{u})-m\nabla(\nabla\cdot\bm{w})=\bm{g}_p.
\end{cases}
\label{eq::poroel2}
\end{equation}
\begin{remark}
We point out that the $(\bm{u},\bm{w})$ formulation \eqref{eq::poroel2} is not the unique possible choice. For example, one could write the equations considering the velocity of the solid skeleton $\dot{\bm{u}}$ and the filtration velocity $\dot{\bm{w}}$ as unknowns, cf. \cite{chiavassa_lombard_2013}, or consider a velocity-pressure $(\bm{u},p)$ formulation, as in \cite{zunino,wohlmuth,botti,phillips2008coupling}. Here, the \textit{two-displacement} formulation turns out to be convenient in view of the coupling conditions stated below.
\end{remark}

In the fluid domain $\Omega_a$, we consider an acoustic wave with constant velocity \MB{$c>0$ and mass density $\rho_a>0$}. 
For a given source term $f_a$, the acoustic potential $\varphi$ satisfies
\begin{equation}
c^{-2}\ddot{\varphi}-\rho_a^{-1}\nabla \cdot( \rho_a \nabla\varphi) = f_a, \quad \text{in }\Omega_a\times(0,T].
\label{eq::acousticeq}
\end{equation}
Finally, we discuss the transmission conditions on $\Gamma_I$. The poro-elasto-acoustic coupling is realized through interface conditions, cf. \cite{gurevich1999interface}, expressing the continuity of normal stresses and conservation of mass. The continuity of the pressure is prescribed by writing the acoustic potential in terms of a pressure. Thus, on $\Gamma_I$ we impose
\begin{align}
-\bm{\sigma}\bm{n}_p & = \rho_a\dot{\varphi}\bm{n}_p,    \label{eq::contstress}\\
(\dot{\bm{u}}+\dot{\bm{w}})\cdot\bm{n}_p &= -\nabla\varphi\cdot\bm{n}_p, 	\label{eq::cont_vel} \\
\kint[p] & = \MB{(1-\kint)}\dot{\bm{w}}\cdot\bm{n}_p,    \label{eq::cont_pres} 
\end{align}
where  $[\cdot]$ denotes the jump operator at the interface $\Gamma_I$, i.e. \MB{$[p]=p(\bm{u},\bm{w})-p_a(\varphi)$ with $p_a(\varphi)=\rho_a\dot{\varphi}$, and $0\le\kint\le1$ is the hydraulic permeability at the interface and models both open, sealed, and imperfect pores, cf. \ref{fig::pores}. The stress tensor $\bm{\sigma}$ and  the pressure $p(\bm{u},\bm{w})$ obey the constitutive equations \eqref{eq::const_sigma_press}.
If $\kint=1$ (\textit{open} pores), equation \eqref{eq::cont_pres} reduces to the continuity of pressure at the interface, that is $p(\bm{u},\bm{w})=\rho_a\dot{\varphi}$}.  If $\kint=0$ (\textit{sealed} pores), \eqref{eq::cont_pres} simplifies to $\dot{\bm{w}}\cdot\bm{n}_p=0$, that implies that \eqref{eq::cont_vel} imposes a continuity only on the solid velocity, namely $\dot{\bm{u}}\cdot\bm{n}_p=-\nabla\varphi\cdot\bm{n}_p$. If $\kint \in (0,1)$ (\textit{imperfect} pores) then an intermediate state between \textit{open} and \textit{sealed} pores occurs.

Supplementing the constitutive equations with suitable boundary conditions (here supposed for simplicity to be of homogeneous Dirichlet type), the \textit{poro-elasto-acoustic problem} reads as: for any  $t \in (0,T]$, find $(\bm{u},\bm{w},\varphi): \Omega_p \times \Omega_p \times \Omega_a\rightarrow\mathbb{R}$ such that:
\begin{equation}\label{system}
\begin{aligned}
\rho\ddot{\bm{u}} + 
\rho_f\ddot{\bm{w}}
-\nabla\cdot(\mathbb{C}:\bm{\epsilon}(\bm{u}))-\beta m\nabla(\beta\nabla\cdot\bm{u}+\nabla\cdot\bm{w})&=\bm{f}_p, 
&& \text{in }\Omega_p,\\
\rho_f\ddot{\bm{u}} + \rho_w\ddot{\bm{w}} + \frac{\eta}{k}\dot{\bm{w}}
- m\nabla(\beta\nabla\cdot\bm{u}+\nabla\cdot\bm{w})&=\bm{g}_p,
&& \text{in }\Omega_p\\
 \rho_a c^{-2}  \ddot{\varphi} - \nabla \cdot (\rho_a\nabla\varphi)& = \rho_a  f_a 
&& \text{in }\Omega_a,\\
-(\mathbb{C}:\bm{\epsilon}(\bm{u})+\beta m(\beta \nabla\cdot\bm{u}+\nabla\cdot \bm{w}) \bm{I})\bm{n}_p  &= \rho_a\dot{\varphi}\bm{n}_p, 
&& \text{on } \Gamma_I,\\
(\dot{\bm{u}}+\dot{\bm{w}})\cdot\bm{n}_p &= -\nabla\varphi\cdot\bm{n}_p, 	
&& \text{on } \Gamma_I,\\
- m(\beta \nabla\cdot\bm{u}+\nabla\cdot \bm{w})-\MB{\kint^{-1}(1-\kint)\dot{\bm{w}}\cdot\bm{n}_p} &=
\MB{\rho_a \dot{\varphi}}, 
&& \text{on } \Gamma_I,
\end{aligned}
\end{equation}
together with initial conditions $\bm{u}(\cdot,0)=\bm{u}_0$, $\bm{w}(\cdot,0)=\bm{w}_0$, $\dot{\bm{u}}(\cdot,0)=\bm{u}_1$, $\dot{\bm{w}}(\cdot,0)=\bm{w}_1$,  in $\Omega_p$ and $\varphi(\cdot,0)=\varphi_0$, $\dot{\varphi}(\cdot,0)=\varphi_1 $ in $\Omega_a$. Notice that the acoustic equation has been multiplied by $\rho_a$. 
The existence and uniqueness of a strong solution to \eqref{system} is proved in Appendix \ref{appendix} by employing the semigroup theory.

\subsection{Weak formulation and stability estimates}
\MB{In order to derive a unified analysis for $0\le\kint\le 1$, we introduce the space
\begin{equation}
\label{eq:W_kint}
\bm{W}_{\kint} = \begin{cases}
\bm H_0(\textrm{div},\Omega_p), \qquad &\text{if }\, \kint = 1,\\
\{ \bm{z}\in \bm H_0(\textrm{div},\Omega_p) \,|\, \zeta(\kint)^{\frac12}(\bm{z}\cdot\bm{n}_p)_{|\Gamma_I} \in L^2(\Gamma_I)\},
\qquad &\text{if }\, \kint \in (0,1),\\
\{ \bm{z}\in \bm H_0(\textrm{div},\Omega_p) \,|\, (\bm{z}\cdot\bm{n}_p)_{|\Gamma_I} = 0 \},
\qquad &\text{if }\, \kint =0,
\end{cases}
\end{equation}
equipped with the norm $\norm{\cdot}_{\bm{W}_{\kint}}$ defined, for all $\bm{z}\in\bm{W}_{\kint}$, as
\begin{equation}
\label{eq:Wt_norm}
\norm{\bm{z}}_{\bm{W}_{\kint}} = \norm{\bm{z}}_{\Omega_p} + \norm{\nabla\cdot\bm{z}}_{\Omega_p} + 
\norm{\zeta(\kint)^{\frac12}\ \bm{z}\cdot\bm{n}_p}_{\Gamma_I},
\quad\text{with }\, \zeta(\kint) =
\begin{cases}
\frac{1-\kint}{\kint} \;\text{ for }\kint \in (0,1],\\
0\quad\;\;\text{ for }\kint=0.
\end{cases}
\end{equation}}
We also define the Hilbert space $ \HH = \bm H^1_0(\Omega_p) \times \RRR{\bm{W}_{\tau}} \times H^1_0(\Omega_a)$ and $\Omega_* = \Omega_p \times \Omega_p \times \Omega_a$.
The weak form  of \eqref{system} reads as: 
for any $t\in(0,T]$, find \MB{$(\bm{u},\bm{w},\varphi)(t) \in \HH $} s.t.
\begin{multline}
\MB{\mathcal{M} ((\ddot{\bm{u}},\ddot{\bm{w}},\ddot{\varphi}), (\bm{v},\bm{z},\psi))  +
\mathcal{A}((\bm{u},\bm{w},\varphi),(\bm{v},\bm{z},\psi)) + \mathcal{B} (\dot{\bm{w}},\bm{z})} \\
+ \mathcal{C}^p(\dot{\varphi},\bm{v} + \bm{z}) + \mathcal{C}^a(\dot{\bm{u}} + \dot{\bm{w}},\psi)
=  ((\bm{f}_p, \bm{g}_p, \rho_a f_a), (\bm{v},\bm{z},\psi))_{\Omega_*}\label{eq::weakform}
\end{multline}
for all $(\bm{v},\bm{z},\psi) \in \HH$, where for any \RRR{$\UU = (\bm u, \bm w, \varphi), \VV = (\bm v, \bm z, \psi) \in \HH$} we have set
\MB{\begin{equation}\label{eq:bilinear_forms}
    \begin{aligned}
    \mathcal{M}(\UU,\VV)  & =  (\rho \bm{u} + \rho_f \bm w , \bm{v})_{\Omega_p} 
    +  ( \rho_f \bm u + \rho_w \bm w, \bm z)_{\Omega_p} + (\rho_a c^{-2} \varphi, \psi)_{\Omega_a}, \\
    \mathcal{A}(\UU,\VV)  & = (\mathbb{C}:\bm{\epsilon}(\bm{u}),\bm{\epsilon}(\bm{v}))_{\Omega_p}+
    (m\nabla\cdot(\beta\bm{u}+\bm w),\nabla\cdot(\beta\bm{v}+\bm{z}))_{\Omega_p}+ 
    (\rho_a\nabla \varphi,\nabla \psi)_{\Omega_a}, \\
	\mathcal{B} (\bm{w},\bm{z}) & = 
	(\eta k^{-1} \bm{w},\bm{z})_{\Omega_p}+ (\zeta(\kint)\bm{w}\cdot\bm{n}_p, \bm{\bm{z}}\cdot\bm{n}_p)_{\Gamma_I}, \\
	\mathcal{C}^p(\varphi,\bm{z}) & =\langle \rho_a \varphi,\bm{z}\cdot\bm{n}_p\rangle_{\Gamma_I}
	= -\mathcal{C}^a(\bm{z},\varphi),
\end{aligned}
\end{equation}
\MB{with $\zeta(\kint)$ defined in \eqref{eq:Wt_norm}.}
Notice that, if $\kint = 0$,  the terms $\mathcal{C}^p(\dot\varphi,z)$ and $\mathcal{C}^a(\dot{\bm{w}},\psi)$ in \eqref{eq::weakform} are null thanks to the definition of $\bm{W}_{\kint}$ which strongly enforces condition \eqref{eq::cont_pres}}.

\MB{Before presenting a stability estimate for the solution of problem \eqref{eq::weakform} we define, for all $\UU = (\bm u, \bm w, \varphi) \in C^1([0,T];\bm{L}^2(\Omega_\star))\cap C^0([0,T];\mathbb{H})$, the energy norm
\begin{equation}\label{eq:energy}
\norm{\UU}_{\mathbb{E}}^2 = \max_{t\in[0,T]}
\norm{\UU(t)}_{\mathcal{E}}^2 =
\max_{t\in(0,T]}\left(\mathcal{M}(\dot{\UU},\dot{\UU})(t)+\mathcal{A}(\UU,\UU)(t)+\mathcal{B}(\bm{w},\bm{w})(t)\right). 
\end{equation}
As a result of the next Lemma,  $\norm{\cdot}_{\mathbb{E}}$ is a norm on $C^1([0,T];\bm{L}^2(\Omega_\star))\cap C^0([0,T];\mathbb{H})$.}
\MB{
\begin{lemma}\label{lemma:stab}
The bilinear forms $\mathcal{M}$, $\mathcal{A}$, and $\mathcal{B}$ defined in \eqref{eq:bilinear_forms} are such that
\begin{align}
\mathcal{M}(\UU,\VV) & \lesssim \norm{\UU}_{\Omega_*} \norm{\VV}_{\Omega_*}, \label{eq:M-cont}\\ 
\mathcal{M}(\UU,\UU) & \gtrsim  \norm{\UU}_{\Omega_*}^2,  \label{eq:M-coer} \\
\mathcal{A}(\UU,\VV) + \mathcal{B}(\bm{w},\bm{z}) & \lesssim 
\norm{\bm u}_{1,\Omega_p} \norm{\bm v}_{1,\Omega_p} + \norm{\bm w}_{\bm{W}_{\kint}}\norm{\bm z}_{\bm{W}_{\kint}} + \norm{\varphi}_{1,\Omega_a}\norm{\psi}_{1,\Omega_a}, \label{eq:A-cont}\\
\mathcal{A}(\UU,\UU) + \mathcal{B}(\bm{w},\bm{w}) & \gtrsim 
\norm{\bm u}_{1,\Omega_p}^2 + \norm{\bm w}_{\bm{W}_{\kint}}^2 + \norm{\varphi}_{1,\Omega_a}^2, \label{eq:A-coer}
\end{align}
for any $\UU=(\bm u, \bm w, \varphi), \VV=(\bm v, \bm z, \psi)\in \HH$.
\end{lemma}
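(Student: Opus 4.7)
The strategy is to handle the four bounds separately. For the continuity estimates \eqref{eq:M-cont} and \eqref{eq:A-cont}, I would apply Cauchy--Schwarz to each summand of \eqref{eq:bilinear_forms}. The only non-routine points are the coupled divergence term in $\mathcal{A}$, controlled by the triangle inequality $\norm{\nabla\cdot(\beta\bm{u}+\bm{w})}_{\Omega_p}\le\beta\norm{\nabla\cdot\bm{u}}_{\Omega_p}+\norm{\nabla\cdot\bm{w}}_{\Omega_p}\lesssim \norm{\bm{u}}_{1,\Omega_p}+\norm{\bm{w}}_{\bm{W}_{\kint}}$, and the interface contribution in $\mathcal{B}$, whose factors $\norm{\zeta(\kint)^{1/2}\bm{w}\cdot\bm{n}_p}_{\Gamma_I}$ and $\norm{\zeta(\kint)^{1/2}\bm{z}\cdot\bm{n}_p}_{\Gamma_I}$ are precisely the weighted trace piece of $\norm{\cdot}_{\bm{W}_{\kint}}$ built into \eqref{eq:Wt_norm}.

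For the mass coercivity \eqref{eq:M-coer}, the plan is to rewrite the poroelastic part of $\mathcal{M}(\UU,\UU)$ as an $\bm{L}^2(\Omega_p)$-pairing weighted by the symmetric $2\times 2$ density matrix with entries $\rho$, $\rho_f$, $\rho_w$, and verify that this matrix is uniformly positive definite: using $\rho=\phi\rho_f+(1-\phi)\rho_s$ and $\rho_w=(a/\phi)\rho_f$ with $a>1$ and $\phi\in[\phi_0,\phi_1]\subset(0,1)$, the determinant $\rho\rho_w-\rho_f^2$ is strictly and uniformly positive in the physical data, so both eigenvalues are bounded below by a positive constant. Adding the trivial $\rho_a c^{-2}\norm{\varphi}_{\Omega_a}^2$ contribution then yields \eqref{eq:M-coer}.

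The main obstacle is \eqref{eq:A-coer}, because $\mathcal{A}$ controls only the combined quantity $\norm{\beta\nabla\cdot\bm{u}+\nabla\cdot\bm{w}}_{\Omega_p}$ rather than $\norm{\nabla\cdot\bm{w}}_{\Omega_p}$ directly. I would first apply Korn's first inequality together with Poincar\'e (available since $\bm{u}\in\bm{H}^1_0(\Omega_p)$ and $|\Gamma_{pD}|>0$) to pass from $(\mathbb{C}:\bm{\epsilon}(\bm{u}),\bm{\epsilon}(\bm{u}))_{\Omega_p}\ge 2\mu_0\norm{\bm{\epsilon}(\bm{u})}_{\Omega_p}^2$ to $\alpha\norm{\bm{u}}_{1,\Omega_p}^2$ for some $\alpha>0$. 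Then I would split this bound as $\tfrac{\alpha}{2}\norm{\bm{u}}_{1,\Omega_p}^2+\tfrac{\alpha}{2}\norm{\bm{u}}_{1,\Omega_p}^2$, keeping the first half to retain the $H^1$ control on $\bm{u}$, and combining the second half with $m\norm{\beta\nabla\cdot\bm{u}+\nabla\cdot\bm{w}}_{\Omega_p}^2$. Using $\norm{\nabla\cdot\bm{u}}_{\Omega_p}^2\le d\norm{\bm{u}}_{1,\Omega_p}^2$, the resulting lower bound reduces to the elementary fact that the quadratic form $(s,t)\mapsto \gamma s^2+m_0(\beta s+t)^2$ on $\mathbb{R}^2$ has symmetric matrix of determinant $\gamma m_0>0$ and is hence uniformly coercive; applying this pointwise with $s=\nabla\cdot\bm{u}(x)$, $t=\nabla\cdot\bm{w}(x)$ and integrating recovers $\norm{\nabla\cdot\bm{w}}_{\Omega_p}^2$. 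Finally, $\mathcal{B}(\bm{w},\bm{w})$ supplies the $L^2(\Omega_p)$ and weighted $L^2(\Gamma_I)$ pieces of $\norm{\bm{w}}_{\bm{W}_{\kint}}^2$, and Poincar\'e on $H^1_0(\Omega_a)$ turns $\rho_a\norm{\nabla\varphi}_{\Omega_a}^2$ into $\norm{\varphi}_{1,\Omega_a}^2$, completing \eqref{eq:A-coer}.
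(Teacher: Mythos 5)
Your proposal is correct and follows essentially the same route as the paper: Cauchy--Schwarz and triangle inequalities for the two continuity bounds, positive definiteness of the density matrix (via $\rho\rho_w-\rho_f^2>0$ and $\rho_a c^{-2}>0$) for \eqref{eq:M-coer}, and Korn's and Poincar\'e's inequalities followed by recovering $\norm{\nabla\cdot\bm w}_{\Omega_p}$ from the combined divergence term together with the $H^1$-control of $\bm u$ for \eqref{eq:A-coer}. The only cosmetic difference is that the paper extracts $\norm{\nabla\cdot\bm w}_{\Omega_p}$ directly from the triangle inequality $\norm{\bm w}_{\bm{W}_{\kint}}^2\lesssim\norm{\nabla\cdot(\beta\bm u+\bm w)}_{\Omega_p}^2+\norm{\beta\nabla\cdot\bm u}_{\Omega_p}^2+\mathcal{B}(\bm w,\bm w)$, whereas you reach the same conclusion through a pointwise $2\times2$ quadratic-form argument.
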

\begin{proof}
Inequalities \eqref{eq:M-cont} and \eqref{eq:A-cont} are readily inferred by applying the Cauchy--Schwarz and triangle inequalities, while \eqref{eq:M-coer} is obtained by noting that $\rho \rho_w - \rho_f^2 > 0 $ and $\rho_a c^{-2} >0$.
The last inequality \eqref{eq:A-coer} represents the $\HH$-coercivity of $\mathcal{A}(\cdot,\cdot)+\mathcal{B}(\cdot,\cdot)$. To prove this property we apply Poincar\'e's and Korn's inequalities in $H^1_0(\Omega_a)$ and $\bm H^1_0(\Omega_p)$, respectively, to infer $\norm{\bm u}_{1,\Omega_p}^2 + \norm{\varphi}_{1,\Omega_a}^2\lesssim \mathcal{A}(\UU,\UU)$. Then, using the triangle inequality and recalling definition \eqref{eq:Wt_norm} of the $\bm{W}_{\kint}$-norm we get
$$
\norm{\bm w}_{\bm{W}_{\kint}}^2 \lesssim \norm{\nabla\cdot(\beta\bm{u}+\bm{w})}_{\Omega_p}^2 + \norm{\beta\nabla\cdot\bm{u}}_{\Omega_p}^2 + \mathcal{B}(\bm{w},\bm{w})\lesssim \mathcal{A}(\UU,\UU)+\mathcal{B}(\bm{w},\bm{w})
$$
and the conclusion follows.
\end{proof}
\begin{theorem}[Stability of the continuous weak formulation]\label{thm:stability}
Assume that the problem data satisfy $(\bm f_p, \bm g_p, \rho_a f_a)\in L^2((0,T);{\bm L}^2(\Omega_*))$, $\UU(0)=({\bm u}_0,{\bm w}_0,{\varphi}_0)\in\HH$, and $\dot{\UU}(0) = ({\bm u}_1,{\bm w}_1,{\varphi}_1)\in {\bm L}^2(\Omega_*)$. For any $t\in(0,T]$, let $\UU(t)=(\bm u,\bm w,\varphi)(t) \in \HH$ be the solution of \eqref{eq::weakform}. Then, it holds
\begin{equation*}
 \norm{\UU(t)}^2_{\mathcal{E}}\lesssim \norm{\UU(0)}_{\mathcal{E}}^2
+ \int_0^T \norm{(\bm f_p,\bm g_p, \rho_a f_a)(s)}_{\Omega_*}^2 ds,
\end{equation*}
with the hidden constant depending on the observation time $t\le T$ and on the material properties, but independent of $\kint$.
\end{theorem}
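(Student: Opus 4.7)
The plan is to carry out a classical energy argument by testing the weak formulation \eqref{eq::weakform} at each time $s\in(0,t]$ against $(\bm v,\bm z,\psi)=(\dot{\bm u},\dot{\bm w},\dot{\varphi})(s)$, so that the symmetric mass and stiffness contributions collapse into total time derivatives of quadratic energies, the resistive form $\mathcal{B}(\dot{\bm w},\dot{\bm w})\ge 0$ provides dissipation, and the fluid--solid coupling terms cancel by the antisymmetry built into \eqref{eq:bilinear_forms}. Integrating in time, handling the source by Young's inequality, and invoking Gronwall's lemma will then deliver control of $\mathcal{M}(\dot{\UU},\dot{\UU})(t)+\mathcal{A}(\UU,\UU)(t)$ together with the integrated dissipation $\int_0^t\mathcal{B}(\dot{\bm w},\dot{\bm w})$; the remaining $\mathcal{B}(\bm w,\bm w)(t)$ piece of the energy norm is then recovered by a separate triangle-type argument.

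Concretely, symmetry of $\mathcal{M}$ and $\mathcal{A}$ gives $\mathcal{M}(\ddot{\UU},\dot{\UU})=\tfrac{1}{2}\tfrac{d}{ds}\mathcal{M}(\dot{\UU},\dot{\UU})$ and $\mathcal{A}(\UU,\dot{\UU})=\tfrac{1}{2}\tfrac{d}{ds}\mathcal{A}(\UU,\UU)$, while the identity $\mathcal{C}^p(\varphi,\bm z)=-\mathcal{C}^a(\bm z,\varphi)$ from \eqref{eq:bilinear_forms} makes the two interface contributions $\mathcal{C}^p(\dot\varphi,\dot{\bm u}+\dot{\bm w})+\mathcal{C}^a(\dot{\bm u}+\dot{\bm w},\dot\varphi)$ in \eqref{eq::weakform} vanish identically. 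This yields the pointwise-in-time identity
\begin{equation*}
\frac{1}{2}\frac{d}{ds}\bigl[\mathcal{M}(\dot{\UU},\dot{\UU})+\mathcal{A}(\UU,\UU)\bigr]+\mathcal{B}(\dot{\bm w},\dot{\bm w})=\bigl((\bm f_p,\bm g_p,\rho_a f_a),(\dot{\bm u},\dot{\bm w},\dot\varphi)\bigr)_{\Omega_*}.
\end{equation*}

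Integrating from $0$ to $t$, bounding the right-hand side by Cauchy--Schwarz and Young's inequality, and using the coercivity $\norm{\dot{\UU}}_{\Omega_*}^2\lesssim\mathcal{M}(\dot{\UU},\dot{\UU})$ of \eqref{eq:M-coer}, one arrives at
\begin{multline*}
X(t):=\mathcal{M}(\dot{\UU},\dot{\UU})(t)+\mathcal{A}(\UU,\UU)(t)+\int_0^t\mathcal{B}(\dot{\bm w},\dot{\bm w})(s)\,ds\\
\lesssim \norm{\UU(0)}_{\mathcal{E}}^2+\int_0^T\norm{(\bm f_p,\bm g_p,\rho_a f_a)(s)}_{\Omega_*}^2\,ds+\int_0^t X(s)\,ds,
\end{multline*}
to which the integral form of Gronwall's lemma applies, producing a bound on $X(t)$ with a constant depending exponentially on $T$ but independent of $\kint$ (the latter enters only through the nonnegative surface contribution to $\mathcal{B}$).

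The final, and only non-standard, step is to bound $\mathcal{B}(\bm w(t),\bm w(t))$, which does not appear on the left of the energy identity. Since $\mathcal{B}$ is positive semi-definite on $\bm{W}_{\kint}$, its square root defines a seminorm; writing $\bm w(t)=\bm w(0)+\int_0^t\dot{\bm w}(s)\,ds$ and applying the triangle inequality for this seminorm together with Cauchy--Schwarz in time yields
\begin{equation*}
\mathcal{B}(\bm w(t),\bm w(t))\lesssim\mathcal{B}(\bm w(0),\bm w(0))+T\int_0^t\mathcal{B}(\dot{\bm w}(s),\dot{\bm w}(s))\,ds,
\end{equation*}
and both terms on the right are already controlled by $\norm{\UU(0)}_{\mathcal{E}}^2$ and by $X(t)$, respectively. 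Combining this estimate with the Gronwall bound and recalling definition \eqref{eq:energy} of $\norm{\UU(t)}_{\mathcal{E}}^2$ closes the argument. I expect this last step to be the principal obstacle, since the dissipation naturally provided by the energy identity acts on $\dot{\bm w}$ and must be lifted to $\bm w$ itself, an issue which becomes delicate in the \emph{sealed}/\emph{imperfect} pore regimes where $\zeta(\kint)$ is nonzero and the interface contribution to $\mathcal{B}$ has to be carried through the seminorm triangle inequality.
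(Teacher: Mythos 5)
Your proposal is correct and follows essentially the same route as the paper's proof: testing with $\dot{\UU}$, exploiting the antisymmetry $\mathcal{C}^p(\dot\varphi,\cdot)=-\mathcal{C}^a(\cdot,\dot\varphi)$, integrating in time, applying Cauchy--Schwarz, Young, the lower bound \eqref{eq:M-coer}, and Gronwall. Your recovery of $\mathcal{B}(\bm w,\bm w)(t)$ from the integrated dissipation via the seminorm triangle inequality and Cauchy--Schwarz in time is exactly the device the paper uses (stated there as $\mathcal{B}({\bm w},{\bm w})(t)\le\mathcal{B}({\bm w},{\bm w})(0)+\int_0^t\mathcal{B}(\dot{\bm w},\dot{\bm w})\,ds$ up to constants absorbed in $\lesssim$), and your version is if anything slightly more explicit about the $T$-dependence.
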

\begin{proof}
Taking $\dot{\UU} = (\dot{\bm u},\dot{\bm w},\dot{\varphi})$ as test functions in \eqref{eq::weakform}, using 
$\mathcal{C}^a(\dot{\bm u} +\dot{\bm w} ,\dot{\varphi})+\mathcal{C}^p(\dot{\varphi},\dot{\bm u} +\dot{\bm w})=0$, and integrating in time between $0$ and $t\le T$, it is inferred that
$$
\mathcal{M}(\dot{\UU},\dot{\UU})(t) + \mathcal{A}(\UU,\UU)(t)+
\int_0^t \hspace{-1mm} 2\mathcal{B}(\dot{\bm w},\dot{\bm w})\ ds
=\mathcal{M}(\dot{\UU},\dot{\UU})(0)+\mathcal{A}(\UU,\UU)(0)
+\int_0^t\hspace{-1mm} 2(\mathfrak{F},\dot{\UU})_{\Omega_*}\ ds,
$$
where we have adopted the abridged notation 
$\mathfrak{F} =  (\bm f_p,\bm g_p,\rho_a f_a)$. Hence, applying the Cauchy--Schwarz and Young inequalities to bound the third term in the right-hand side, using that $\mathcal{B}({\bm w},{\bm w})(t) \le \mathcal{B}({\bm w},{\bm w})(0)
+ \int_0^t \mathcal{B}(\dot{\bm w},\dot{\bm w})(s)\, ds$, and recalling definition \eqref{eq:energy} of the energy norm, for all $t\in (0,T]$ one has  
$$
\norm{\UU(t)}_{\mathcal{E}}^2 \lesssim \norm{\UU(0)}_{\mathcal{E}}^2
+ \int_0^t \norm{\mathfrak{F}(s)}_{\Omega_*}^2 \, ds + \int_0^t \norm{\dot{\UU}(s)}_{\Omega_*}^2 \, ds.
$$
Finally, owing to \eqref{eq:M-cont}, we obtain $\norm{\dot{\UU}}_{\Omega_*}^2\lesssim \norm{\UU}_{\mathcal{E}}^2$,
so that the thesis follows by applying the Gronwall's Lemma \cite{quarteroni2014numerical}.
\end{proof}
}

\section{The semi-discrete formulation and its stability analysis}\label{sec::numerical}

We introduce a \textit{polytopic} mesh $\mathcal{T}_h$ made of general polygons (in 2d) or polyhedra (in 3d) and write $\mathcal{T}_h$ as $\mathcal{T}_h=\mathcal{T}^p_h\cup\mathcal{T}^a_h$, where $\mathcal{T}^{\MB{\#}}_h=\{\kappa\in\mathcal{T}_h:\overline{\kappa}\subseteq\overline{\Omega}_{\MB{\#}}\}$, with $\MB{\#}=\{p,a\}$.
Implicit in this decomposition there is the assumption that the meshes $\mathcal{T}_h^a$ and $\mathcal{T}_h^p$ are aligned with $\Omega_a$ and $\Omega_p$, respectively. 
Polynomial degrees $p_{p,\kappa}\geq1$ and $p_{a,\kappa}\geq1$ are associated with each element of $\mathcal{T}_h^p$ and $\mathcal{T}_h^a$, respectively. 
The discrete spaces are introduced as follows: $\bm{V}_h^p=[\mathcal{P}_{p_p}(\mathcal{T}_h^p)]^d$ and $V_h^a=\mathcal{P}_{p_a}(\mathcal{T}_h^a)$, where \MB{$\mathcal{P}_{r}(\mathcal{T}^{\#}_h)$ is the space of piecewise polynomials in $\Omega_{\#}$ of degree less than or equal to $r$ in any $\kappa\in\mathcal{T}_h^{\#}$ with $\#=\{p,a\}$}.

In the following, we assume that $\mathbb{C}$, $\rho_a$ and $m$ are element-wise constant and we define 
$\overline{\mathbb{C}}_\kappa=(|\mathbb{C}^{1/2}|_2^2)_{|\kappa}$, $\overline{m}_{\kappa}=( m)_{|\kappa}$ for all $\kappa\in\mathcal{T}_h^p$ and $\overline{\rho}_{a,\kappa}=\rho_{a|\kappa}$ for all $\kappa\in\mathcal{T}_h^a$.
The symbol $|\cdot|_2$ stands for \MB{the $\ell^2$-norm on $\mathbb{R}^{n\times n}$, with $n=3$ if $d=2$ and $n=6$ if $d=3$.}
In order to deal with \myred{polygonal and polyhedral elements}, we define an \textit{interface}  as the intersection of the $(d-1)$-dimensional faces of  
any two neighboring elements of $\mathcal{T}_h$. If $d=2$, an interface/face is a line segment and the set of all interfaces/faces is  denoted by $\mathcal{F}_h$.
When $d=3$, an interface can be a general polygon that we assume could be further decomposed into a set of planar triangles collected in the set  $\mathcal{F}_h$.  
We decompose $\mathcal{F}_h$ as $\mathcal{F}_h=\mathcal{F}_{h}^I \cup \mathcal{F}_h^p \cup \mathcal{F}_h^a$, where
$ \mathcal{F}_{h}^I=\{F\in\mathcal{F}_h:F\subset\partial \kappa^p\cap\partial \kappa^a,\kappa^p\in\mathcal{T}_{h}^p,\kappa^a\in\mathcal{T}_{h}^a\}$, and 
$\mathcal{F}_h^p$ and $\mathcal{F}_h^a$ denote all the faces of $\mathcal{T}_h^p$ and $\mathcal{T}_h^a$, respectively, not laying on $\Gamma_I$.
Finally, the faces of $\mathcal{T}_h^p$ and $\mathcal{T}_h^a$ can be further written as the union of \textit{internal} ($i$) and \textit{boundary} ($b$) faces, respectively, i.e.:
$\mathcal{F}^p_h=\mathcal{F}^{p,i}_h\cup\mathcal{F}^{p,b}_h$ and 
$\mathcal{F}^a_h=\mathcal{F}^{a,i}_h\cup\mathcal{F}^{a,b}_h.$

Following \cite{CangianiDongGeorgoulisHouston_2017}, we next introduce the main assumption on $\mathcal{T}_h$. 
\begin{definition}\label{def::polytopic_regular}
A mesh $\mathcal{T}_h$ is said to be \textit{polytopic-regular} if for any $ \kappa \in \mathcal{T}_h$, there exists a set of non-overlapping $d$-dimensional simplices contained in $\kappa$, denoted by $\{S_\kappa^F\}_{F\subset{\partial \kappa}}$, such that for any face $F\subset\partial \kappa$, the following condition holds:
\begin{equation}
h_\kappa\lesssim d|S_\kappa^F| \, |F|^{-1}.
\label{eq::firstdef}
\end{equation}
\end{definition}
\begin{assumption}
The sequence of meshes $\{\mathcal{T}_h\}_h$ is assumed to be \textit{uniformly} polytopic regular in the sense of  Definition~\ref{def::polytopic_regular}.
\label{ass::regular}
\end{assumption}
\noindent
As pointed out in \cite{CangianiDongGeorgoulisHouston_2017}, this assumption does not impose any restriction on either the number of faces per element nor their measure relative to the diameter of the element they belong to.
Under Assumption~\ref{ass::regular}, the following \textit{trace-inverse inequality} holds:
\begin{align}
& ||v||_{L^2(\partial \kappa)}\lesssim ph_\kappa^{-1/2}||v||_{L^2(\kappa)}
&& \forall \ \kappa\in\mathcal{T}_h \ \forall v \in \mathcal{P}_p(\kappa).
\label{eq::traceinv}
\end{align}
\MB{In order to avoid technicalities}, we also make the following assumption.
\begin{assumption}
For any pair of neighboring elements $\kappa^\pm\in\mathcal{T}_h$. The following \textit{hp-local bounded variation property} holds: $h_{\kappa^+}\lesssim h_{\kappa^-}\lesssim h_{\kappa^+},\ \ p_{\kappa^+}\lesssim p_{\kappa^-}\lesssim p_{\kappa^+}$.
\label{ass::3}
\end{assumption}

Finally, following  \cite{Arnoldbrezzicockburnmarini2002}, for sufficiently piecewise smooth scalar-, vector- and tensor-valued fields $\psi$, $\bm{v}$ and $\bm{\tau}$, respectively, we define the averages and jumps on each \textit{interior} face $F\in\mathcal{F}_h^{p,i}\cup\mathcal{F}_h^{a,i}\cup\mathcal{F}_h^{I}$ shared by the elements $\kappa^{\pm}\in \mathcal{T}_h^p$ as follows:
\begin{align*}
 \llbracket\psi\rrbracket  &=  \psi^+\bm{n}^++\psi^-\bm{n}^-, 
 &&\llbracket\bm{v}\rrbracket  = \bm{v}^+\otimes\bm{n}^++\bm{v}^-\otimes\bm{n}^-, 
 &&\MB{\llbracket\bm{v}\rrbracket_{\bm n}  = \bm{v}^+\cdot\bm{n}^++\bm{v}^-\cdot\bm{n}^-}, \\
 \llbrace\psi \rrbrace &= \frac{\psi^++\psi^-}{2}, 
&&\llbrace\bm{v}\rrbrace   = \frac{\bm{v}^++\bm{v}^-}{2}, 
&&\llbrace\bm{\tau}\rrbrace  = \frac{\bm{\tau}^++\bm{\tau}^-}{2}, 
\end{align*}
where $\otimes$ is the tensor product in $\mathbb{R}^3$, $\cdot^{\pm}$ denotes the trace on $F$ taken within $\kappa^\pm$, and $\bm{n}^\pm$ is the outer normal vector to $\partial \kappa^\pm$. Accordingly, on \textit{boundary} faces $F\in\mathcal{F}_h^{p,b}\cup\mathcal{F}_h^{a,b}$, we set
$
\llbracket\psi\rrbracket = \psi\bm{n},\
\llbrace\psi \rrbrace = \psi,\
\llbracket\bm{v}\rrbracket= \bm{v}\otimes\bm{n},\
\MB{\llbracket\bm{v}\rrbracket_{\bm n} =\bm{v}\cdot\bm{n}},\
\llbrace\bm{v}\rrbrace= \bm{v},\
\llbrace\bm{\tau}\rrbrace= \bm{\tau}.$

\subsection{Semi-discrete PolyDG formulation}
We are now ready to introduce the semi-discrete formulation: for $t\in(0,T]$, find $(\bm{u}_h,\bm{w}_h,\varphi_h)(t)\in \bm{V}_h^p\times \bm{V}_h^p\times V_h^a$, s.t.
\begin{multline}
\MB{\mathcal{M}((\ddot{\bm{u}_h},\ddot{\bm{w}_h},\ddot{\varphi}_h), (\bm{v}_h,\bm{z}_h,\psi_h)) +
\mathcal{A}_h((\bm{u}_h,\bm{w}_h,\varphi_h),(\bm{v}_h,\bm{z}_h,\psi_h)) +
\mathcal{B} (\dot{\bm{w}}_h,{\bm z}_h)} \\
+\mathcal{C}_h^p(\dot{\varphi_h},\bm{v}_h+\bm{z}_h)
+\mathcal{C}_h^a(\dot{\bm{u}_h}+\dot{\bm{w}_h},\psi_h)
= ((\bm{f}_p, \bm{g}_p, \rho_a f_a), (\bm{v}_h,\bm{\xi}_h,\psi_h))_{\Omega_*}\label{eq::dgsystem}
\end{multline}
for all $(\bm{v}_h,\bm{\xi}_h,\psi_h)\in \bm{V}_h^p\times \bm{V}_h^p\times V_h^a$. 
\MB{As initial conditions we take the $L^2$-orthogonal projections onto $(\bm{V}_h^p\times \bm{V}_h^p\times V_h^a)^2$ of the initial data $(\bm{u}_0,\bm{w}_0,\varphi_0,\bm{u}_1, \bm{w}_1,\varphi_1)$}.
We define $\nabla_h$ and $\nabla_h \cdot$ to be the broken \MB{gradient} and divergence operators, respectively, 
set $\bm{\epsilon}_h(\bm{v})=\frac{\nabla_h \bm{v} + \nabla_h \bm{v}^T}{2}$, $\bm{\sigma}_h(\bm{v})=\mathbb{C}:\bm{\epsilon}_h(\bm{v})$, and use the short-hand notation  
\MB{$(\cdot,\cdot)_{\Omega_{\#}}=\sum_{\kappa\in\mathcal{T}_h^{\#}} \int_\kappa\cdot$ and $\langle\cdot,\cdot\rangle_{\mathcal{F}_h^{\#}}=\sum_{F\in\mathcal{F}_h^{\#}}\int_F\cdot$ for $\# = \{a,p\}$.} 
Then, for all $\bm{u},\bm{v},\bm{w},\bm{z} \in\bm{V}_h^p$ and  $ \varphi,\psi \in V_h^a$, the bilinear forms appearing in the above formulation are given by
\MB{\begin{align}
\mathcal{A}_h((\bm{u},\bm{v},\varphi),(\bm{v},\bm{z},\psi))  & = \mathcal{A}_h^e(\bm{u},\bm{v})+
\mathcal{A}_h^p(\beta\bm{u}+\bm w,\beta\bm{v}+\bm{z}) +  \mathcal{A}_h^a(\varphi,\psi), \label{eq:bilinear_Ah}\\
\mathcal{C}_h^p(\varphi,\bm{v}) &=
\langle \rho_a\varphi,\bm{v}\cdot\bm{n}_p\rangle_{\mathcal{F}_{h}^{I}}= -\mathcal{C}_h^a(\bm{v},\varphi),
\label{eq::bilineardg2}
\end{align}}
with
\begin{equation}
\label{eq::DGbilinearforms}
\begin{aligned}
\mathcal{A}_h^e(\bm{u},\bm{v})
& = (\bm{\sigma}_h(\bm{u}),\bm{\epsilon}_h(\bm{v}))_{\Omega_p}-\langle\llbrace\bm{\sigma}_h(\bm{u})\rrbrace,\llbracket\bm{v}\rrbracket
\rangle_{\mathcal{F}_h^p} \\ \nonumber
& \qquad \qquad \qquad \qquad \qquad \qquad
- \langle\llbracket\bm{u}\rrbracket, \llbrace\bm{\sigma}_h(\bm{v})\rrbrace \rangle_{\mathcal{F}_h^p}+ 
\langle\alpha\llbracket\bm{u}\rrbracket,\llbracket\bm{v}\rrbracket\rangle_{\mathcal{F}_h^p},\\
\mathcal{A}_h^p(\bm{w},\bm{z})&=
( m\nabla_h\cdot\bm{w},\nabla_h\cdot\bm{z})_{{\Omega}_p}
\MB{-\langle\llbrace m(\nabla_h\cdot\bm{w})\rrbrace, \llbracket\bm{z}\rrbracket_{\bm n}\rangle_{\mathcal{F}_h^{\star}}}
\\ \nonumber
& \qquad \qquad \qquad \qquad \qquad \qquad - \MB{\langle\llbracket\bm{w}\rrbracket_{\bm n},\llbrace m(\nabla_h\cdot\bm{z})\rrbrace\rangle_{\mathcal{F}_h^\star}+\langle \gamma \llbracket\bm{w}\rrbracket_{\bm n}, \llbracket\bm{z}\rrbracket_{\bm n}\rangle_{\mathcal{F}_h^{\star}},}\\
\mathcal{A}_h^a(\varphi,\psi) & = (\rho_a\nabla_h \varphi,\nabla_h \psi)_{\Omega_a}
-\langle\llbrace\rho_a\nabla_h \varphi\rrbrace,\llbracket\psi\rrbracket\rangle_{\mathcal{F}_h^a}
\\ \nonumber
& \qquad \qquad \qquad \qquad \qquad \qquad
- \langle\llbracket\varphi\rrbracket, \llbrace\rho_a\nabla_h \psi\rrbrace \rangle_{\mathcal{F}_h^a}
+ \langle \chi \llbracket\varphi\rrbracket,\llbracket\psi\rrbracket\rangle_{\mathcal{F}_h^a},
\end{aligned}
\end{equation}
\MB{and $\mathcal{F}_h^\star = \mathcal{F}_h^p$ in the case $\kint\in(0,1]$, while $\mathcal{F}_h^\star = \mathcal{F}_h^p\cup\mathcal{F}_h^I$ in the case $\kint=0$}.
The stabilization functions $\alpha\in L^\infty(\mathcal{F}_h^p)$, $\gamma\in L^\infty(\mathcal{F}_h^p)$ and $\chi\in L^\infty(\mathcal{F}_h^a)$, are defined s.t.
\begin{align}
&
\alpha|_F=
\begin{cases}
c_1 \max\limits_{\kappa\in\{\kappa^+,\kappa^-\}}\left(\overline{\mathbb{C}}_\kappa\ p_{p,\kappa}^2{h_\kappa^{-1}}\right) \hspace{7,5mm}&\forall F\in\mathcal{F}_h^{p,i},\hspace{8,5mm}F\subseteq\partial \kappa^+\cap\partial \kappa^-, \vspace{0.1cm}\label{eq::stab_1}\\
\overline{\mathbb{C}}_\kappa \ p_{p,\kappa}^2{h_\kappa^{-1}}&\forall F\in\mathcal{F}_h^{p,b},\hspace{8,5mm}F\subseteq\partial \kappa,
\end{cases}\\ \nonumber\\
&
\gamma|_F=
\begin{cases}
c_2 \max\limits_{\kappa\in\{\kappa^+,\kappa^-\}}\left(\overline{ m}_\kappa \ p_{p,\kappa}^2{h_\kappa^{-1}}\right) \hspace{7,5mm}&\forall F\in\mathcal{F}_h^{p,i},\hspace{8,5mm}F\subseteq\partial \kappa^+\cap\partial \kappa^-, \vspace{0.1cm}\label{eq::stab_2}\\
\overline{ m}_\kappa \ p_{p,\kappa}^2{h_\kappa^{-1}}&\forall F\in\MB{\mathcal{F}_h^{p,b}\cup\mathcal{F}_h^{I}},\hspace{0,5mm} F\subseteq\partial \kappa,
\end{cases} \\ \nonumber\\
&
\chi|_F=
\begin{cases}
c_3 \max\limits_{\kappa\in\{\kappa^+,\kappa^-\}}\left(\overline{\rho}_{a,\kappa}\ p_{a,\kappa}^2{h_\kappa^{-1}}\right) \hspace{7,5mm}&\forall F\in\mathcal{F}_h^{a,i},\hspace{8,5mm}F\subseteq\partial \kappa^+\cap\partial \kappa^-, \vspace{0.1cm}\label{eq::stab_3}\\
\overline{\rho}_{a,\kappa}\  p_{a,\kappa}^2{h_\kappa^{-1}}&\forall F\in\mathcal{F}_h^{a,b},\hspace{8,5mm}F\subseteq\partial \kappa,
\end{cases}
\end{align}
with $c_1,\ c_2,\ c_3>0$ positive constants, to be properly chosen.
The definition of the penalty functions \eqref{eq::stab_1}--\eqref{eq::stab_3} is based on \cite[Lemma 35]{CangianiDongGeorgoulisHouston_2017}. With this choice, the bilinear forms in \eqref{eq::DGbilinearforms} are symmetric and coercive, cf. Lemma \ref{lem::cont}.
Alternative stabilization functions can be defined in the spirit of \cite{M2AN_2013__47_3_903_0}. The analysis of the latter is however beyond the scope of this work. See also \cite{sarkis2010} for the elliptic case.

By fixing a basis for $\bm V_h^p$ and  $V_h^a$ and denoting by ($U$, $W$,$\Phi$) the vector of the expansion coefficients in the chosen basis of the unknowns $\bm{u}_h$,  $\bm{w}_h$ and $\varphi_h$, respectively, the semi-discrete formulation \eqref{eq::dgsystem} can be written equivalently as:
\begin{multline}\label{eq::algebraic}
\left[ \begin{matrix} 
\MB{\rho\bm{M}^p} & \MB{\rho_f\bm{M}^p} & 0 \\
\MB{\rho_f\bm{M}^p} & \MB{\rho_w\bm{M}^p} & 0 \\
0 & 0 & \MB{\rho_a c^{-2}\bm{M}^a}
\end{matrix} \right]  
\left[ \begin{matrix} 
\ddot{U} \\
\ddot{W} \\ 
\ddot{\Phi}
\end{matrix} \right] + 
\left[ \begin{matrix} 
0 & 0 & \bm{C}^p \\
0 & \MB{\bm{B}} & \bm{C}^p \\
\bm{C}^a & \bm{C}^a & 0
\end{matrix} \right]  
\left[ \begin{matrix} 
\dot{U} \\
\dot{W} \\ 
\dot{\Phi}
\end{matrix} \right] \\
+\left[ \MB{\begin{matrix} 
\bm{A}^e +  \beta^2\bm{A}^p &  \beta\bm{A}^p & 0 \\
\beta\bm{A}^p & \bm{A}^p & 0 \\ 
0 & 0 & \bm{A}^a
\end{matrix}} \right] 
\left[ \begin{matrix} 
{U} \\
{W} \\ 
{\Phi}
\end{matrix} \right] = 
\left[ \begin{matrix} 
\bm{F}^p \\
\bm{G}^p \\ 
\bm{F}^a
\end{matrix} \right] 
\end{multline}
with initial conditions $U(0)=U_0$,  $W(0)=W_0$, $\Phi(0)=\Phi_0$, $\dot{U}(0)=U_1$, $\dot{W}(0)=W_1$, $\dot{\Phi}(0)=\Phi_1$.
We remark that $\bm{F}^p$, $\bm{G}^p$ and $\bm{F}^a$ are the vector representations of the linear functionals $(\bm{f}_{p},\bm{v}_h)_{\Omega_p}$,
$(\bm{g}_{p},\bm{\xi}_h)_{\Omega_p}$ and $( \rho_a f_{a},\psi_h)_{\Omega_a}$, respectively. 

\subsection{Stability analysis}
To carry out the stability analysis of the semi-discrete problem, we introduce the energy norm
\MB{\begin{multline}
\label{eq::norm_def1}
\norm{(\bm v,\bm z,\psi)(t)}_{\rm E}^2=
\mathcal{M}((\dot{\bm v},\dot{\bm z},\dot{\psi}), (\dot{\bm v},\dot{\bm z},\dot{\psi}))(t)+ \mathcal{B}({\bm z},{\bm z}) (t) \\+
\norm{\bm v (t)}_{\rm dG,e}^2 + |(\beta\bm v+\bm z)(t)|_{\rm dG,p}^2 + \norm{\psi(t)}_{\rm dG,a}^2 
\end{multline}
for all  $(\bm v,\bm z,\psi) \in C^1([0,T];\bm{V}_h^p\times \bm{V}_h^p\times V_h^a)$, where
\begin{align*}
\norm{\bm{v}}_{\rm dG,e}^2 & =\norm{\mathbb{C}^{1/2}:\bm{\epsilon}_h(\bm{v})}_{\Omega_p}^2+\norm{\alpha^{1/2}\llbracket\bm{v}\rrbracket}_{\mathcal{F}_h^p}^2  & \forall \bm v\in \bm V_h^p, \\
|\bm{z}|_{\rm dG,p}^2 &=\norm{{m}^{1/2}\nabla_h\cdot\bm z}_{\Omega_p}^2+
\norm{\gamma^{1/2}\llbracket\bm{z}\rrbracket_{\bm n}}_{\MB{\mathcal{F}_h^\star}}^2 & \forall \bm z\in \bm V_h^p,\\
\norm{\psi}_{\rm dG,a}^2 &= \norm{{\rho_a}^{1/2}\nabla_h\psi}_{\Omega_a}^2+\norm{\chi^{1/2}\llbracket\psi\rrbracket}_{\mathcal{F}_h^a}^2 & \forall \psi\in V_h^a.
\end{align*}
\begin{remark}
The notation $|\cdot|_{\rm dG,p}$ is used instead of $\norm{\cdot}_{\rm dG,p}$ in order to highlight that $|\cdot|_{\rm dG,p}:\bm{V}_h^p\to\mathbb{R}^+$ is a seminorm. However, by proceeding as in the proof of \eqref{eq:A-coer}, we can show that $\norm{\bm v}_{\rm dG,e}^2+|\beta\bm v+\bm z|_{\rm dG,p}^2+\mathcal{B}({\bm z},{\bm z})$ is a norm on $\bm{V}_h^p\times\bm{V}_h^p$. 
\end{remark}}
\begin{remark}
Notice that the norm defined in \eqref{eq::norm_def1} represents the mechanical energy of the poroelasto-acoustic system.
We observe that in the case of null external forces, i.e., ${\bm f_p} = {\bm g_p} = {\bm 0}$ and $f_a = 0$, estimate \eqref{theo:energy_system} reduces to $\norm{(\bm u_h,\bm w_h,\varphi_h)(t)}_{\rm{E}}\lesssim \norm{(\bm u_h,\bm w_h,\varphi_h)(0)}_{\rm{E}}$ for any $t>0$, namely the dG formulation \eqref{eq::dgsystem} is dissipative.
\end{remark}
The main \rred{stability} result is stated in the following theorem.
\begin{theorem}[Stability of the semi-discrete formulation]\label{teo:stability}
Let  Assumptions~\ref{ass::regular} and \ref{ass::3} be satisfied. For sufficiently large penalty parameters $c_1$, $c_2$ and $c_3$ in \eqref{eq::stab_1}, \eqref{eq::stab_2} and \eqref{eq::stab_3}, respectively, let $(\bm u_h,\bm w_h,\varphi_h)(t)\in \bm{V}_h^p\times \bm{V}_h^p\times V_h^a$ be the solution of \eqref{eq::dgsystem} for any $t\in(0,T]$. Then, it holds
\MB{\begin{equation}\label{theo:energy_system}
\norm{(\bm u_h,\bm w_h,\varphi_h)(t)}_{\rm{E}}\lesssim\norm{(\bm u_h,\bm w_h,\varphi_h)(0)}_{\rm{E}} 
+\int_0^t \norm{(\bm f_p,\bm g_p,\rho_a f_a)(s)}_{\Omega_*}^2 \, ds,
\end{equation}
where the hidden constant depends on time $t$ and on the material properties, but is independent of $\kint$.}
\end{theorem}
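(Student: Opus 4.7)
The plan is to mirror the argument used in the proof of Theorem~\ref{thm:stability} for the continuous setting, with the additional, key step of establishing coercivity of $\mathcal{A}_h$ with respect to the DG energy norm. Specifically, I take $(\bm v_h,\bm z_h,\psi_h) = (\dot{\bm u}_h,\dot{\bm w}_h,\dot{\varphi}_h)$ as test functions in \eqref{eq::dgsystem}. Exploiting the symmetry of $\mathcal{M}$ and of $\mathcal{A}_h$ (inherent to the symmetric interior-penalty construction \eqref{eq::DGbilinearforms}), the terms involving $\ddot{\UU}_h$ and the stiffness contribution can be rewritten as time derivatives $\tfrac{1}{2}\tfrac{d}{dt}\mathcal{M}(\dot{\UU}_h,\dot{\UU}_h)$ and $\tfrac{1}{2}\tfrac{d}{dt}\mathcal{A}_h(\UU_h,\UU_h)$, respectively. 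The coupling contributions cancel thanks to the antisymmetry $\mathcal{C}_h^p(\dot{\varphi}_h,\dot{\bm u}_h+\dot{\bm w}_h) = -\mathcal{C}_h^a(\dot{\bm u}_h+\dot{\bm w}_h,\dot{\varphi}_h)$ from \eqref{eq::bilineardg2}. Integrating in time from $0$ to $t$ yields the energy identity
\[
\mathcal{M}(\dot{\UU}_h,\dot{\UU}_h)(t) + \mathcal{A}_h(\UU_h,\UU_h)(t) + 2\int_0^t \mathcal{B}(\dot{\bm w}_h,\dot{\bm w}_h)\, ds = \mathcal{M}(\dot{\UU}_h,\dot{\UU}_h)(0) + \mathcal{A}_h(\UU_h,\UU_h)(0) + 2\int_0^t (\mathfrak{F},\dot{\UU}_h)_{\Omega_*}\, ds,
\]
with $\mathfrak{F}=(\bm f_p,\bm g_p,\rho_a f_a)$.

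The central analytic step is to establish the discrete coercivity
\[
\mathcal{A}_h(\UU_h,\UU_h) \gtrsim \norm{\bm u_h}_{\rm dG,e}^2 + |\beta\bm u_h+\bm w_h|_{\rm dG,p}^2 + \norm{\varphi_h}_{\rm dG,a}^2,
\]
which, by the block structure \eqref{eq:bilinear_Ah}, decouples into three standard interior-penalty coercivity statements for $\mathcal{A}_h^e$, $\mathcal{A}_h^p$, and $\mathcal{A}_h^a$. For each, I would apply Cauchy--Schwarz and Young's inequalities to the consistency terms (e.g.\ $\langle\llbrace\bm\sigma_h(\bm u)\rrbrace,\llbracket\bm v\rrbracket\rangle_{\mathcal{F}_h^p}$) and then use the trace--inverse inequality \eqref{eq::traceinv} together with the $hp$-local bounded variation of Assumption~\ref{ass::3} to bound these face contributions by a constant multiple of $\overline{\mathbb{C}}_\kappa\, p_{p,\kappa}^2\, h_\kappa^{-1}\norm{\llbracket\bm v\rrbracket}_F^2$ plus a small fraction of the volume terms. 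Choosing $c_1,c_2,c_3$ large enough then absorbs these face terms into the stabilization contributions defined in \eqref{eq::stab_1}--\eqref{eq::stab_3}, delivering the coercivity bound.

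With coercivity in hand, the forcing term is controlled by $2(\mathfrak{F},\dot{\UU}_h)_{\Omega_*} \le \norm{\mathfrak{F}}_{\Omega_*}^2 + \norm{\dot{\UU}_h}_{\Omega_*}^2$, and the last summand is absorbed via $\norm{\dot{\UU}_h}_{\Omega_*}^2 \lesssim \mathcal{M}(\dot{\UU}_h,\dot{\UU}_h) \le \norm{\UU_h}_{\rm E}^2$. To reconstruct the $\mathcal{B}(\bm w_h,\bm w_h)(t)$ contribution appearing in the definition \eqref{eq::norm_def1} of $\norm{\cdot}_{\rm E}$, I proceed as in the continuous proof, writing $\bm w_h(t) = \bm w_h(0) + \int_0^t \dot{\bm w}_h(s)\, ds$ and applying Cauchy--Schwarz in time to get $\mathcal{B}(\bm w_h,\bm w_h)(t) \lesssim \mathcal{B}(\bm w_h,\bm w_h)(0) + t\int_0^t \mathcal{B}(\dot{\bm w}_h,\dot{\bm w}_h)\, ds$, which is already controlled on the left-hand side of the energy identity. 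Gronwall's lemma then yields \eqref{theo:energy_system}.

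The main obstacle I anticipate is ensuring that the hidden constant is truly independent of the interface permeability $\kint$. This requires two careful checks: first, that the penalty function $\gamma$ in \eqref{eq::stab_2} is defined on $\mathcal{F}_h^{p,b}\cup\mathcal{F}_h^I$ with magnitude independent of $\kint$, so that the coercivity of $\mathcal{A}_h^p$ holds uniformly when $\mathcal{F}_h^\star$ changes with $\kint$; and second, that the interface term $\langle\zeta(\kint)\,\bm w\cdot\bm n_p,\bm z\cdot\bm n_p\rangle_{\Gamma_I}$ embedded in $\mathcal{B}$ is treated as a nonnegative contribution (and in the degenerate cases $\kint=1$ and $\kint=0$ reduces respectively to an empty term and a constraint strongly enforced by the space $\bm{W}_{\kint}$, and hence by its discrete counterpart). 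Once these uniformity issues are addressed, the structure of the argument is identical to the continuous case.
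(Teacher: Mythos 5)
Your proposal is correct and follows essentially the same route as the paper: test with the time derivatives, exploit skew-symmetry of the coupling forms, integrate in time, invoke the discrete coercivity of $\mathcal{A}_h$ (which the paper delegates to Lemma~\ref{lem::cont}, proved exactly by the trace-inverse/Young absorption argument you sketch), recover $\mathcal{B}(\bm w_h,\bm w_h)(t)$ from its time derivative, and conclude with Gronwall. Your added remarks on the uniformity in $\kint$ (nonnegativity of $\mathcal{B}$ and the $\kint$-independent definition of $\gamma$ on $\mathcal{F}_h^\star$) are consistent with, and slightly more explicit than, the paper's treatment.
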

\begin{proof}
By taking $(\bm{v}_h,\bm{z}_h,\psi_h)=(\dot{\bm{u}}_h,\dot{\bm{w}}_h,\dot{\varphi}_h) \in \bm{V}_h^p\times \bm{V}_h^p\times V_h^a$ in \eqref{eq::dgsystem} and using the skew-symmetry of the coupling bilinear forms \eqref{eq::bilineardg2}, we obtain
\begin{multline*}
\frac{1}{2}\frac{d}{dt}\bigg[
\MB{\mathcal{M}((\dot{\bm{u}_h},\dot{\bm{w}_h},\dot{\varphi}_h), (\dot{\bm{u}_h},\dot{\bm{w}_h},\dot{\varphi}_h))  +
\mathcal{A}_h((\bm{u}_h,\bm{v}_h,\varphi_h),(\bm{u}_h,\bm{v}_h,\varphi_h))}\bigg] \\
+ \MB{\mathcal{B}(\dot{\bm w}_h, \dot{\bm w}_h)} =
((\bm{f}_p, \bm{g}_p, \rho_a f_a), (\dot{\bm{u}}_h,\dot{\bm{z}}_h,\dot{\varphi}_h))_{\Omega_*}. 
\end{multline*}
\MB{Thus, integrating in time between $0$ and $t\le T$, recalling definition \eqref{eq:bilinear_Ah} of $\mathcal{A}_h$, using the coercivity results of Lemma~\ref{lem::cont}, and reasoning as in the proof of Theorem~\ref{thm:stability}, one can easily obtain the thesis.}
\end{proof}

\section{Error analysis for the semi-discrete formulation}\label{sec::errors}
In this section we prove an a-priori error estimate for the semi-discrete problem \eqref{eq::dgsystem}. 
We first observe that by setting, for any \MB{time $t\in (0,T]$}, $\bm e^u(t)=(\bm u-\bm u_h)(t)$, $\bm e^w(t)=(\bm w-\bm w_h)(t)$, and $e^\varphi(t)=(\varphi-\varphi_h)(t)$ and by using the \textit{strong consistency} of the semi-discrete formulation \eqref{eq::dgsystem}, 
the \textit{error equation} reads as follows
\RRR{\begin{multline}
\mathcal{M}((\ddot{\bm{e}}^{u},\ddot{\bm{e}}^{w},\ddot{e}^\varphi), (\bm{v},\bm{z},\psi))  +
\mathcal{A}_h((\bm{e}^{u},\bm{e}^{w}, e^\varphi),(\bm{v},\bm{z},\psi)) + \mathcal{B}(\dot{\bm{e}}^w,\bm{z}) \\
+\mathcal{C}_h^p(\dot{e}^\varphi,\bm{v} + \bm{z})
+\mathcal{C}_h^a(\dot{\bm{e}}^u + \dot{\bm{e}}^w,{\psi})=0
\label{eq::error_eq} 
\end{multline}
for any $(\bm{v},\bm{z},\psi) \in \bm{V}_h^p\times \bm{V}_h^p\times V_h^a$.}
Next, we introduce the following definition and a further mesh assumption; cf \cite{cangiani2014hp,CangianiDongGeorgoulisHouston_2017}.
\begin{definition}\label{def:covering}
A \textit{covering} \RRR{$\mathcal{T}_{\S}=\{\mathcal{K}\}$} of the polytopic mesh $\mathcal{T}_h$ is a set of regular shaped $d$-dimensional simplices $\mathcal{K}$, $d=2,3$, s.t. $\forall \ \kappa\in\mathcal{T}_h$, $\exists \ \mathcal{K}\in  \RRR{\mathcal{T}_{\S}}$ s.t. $\kappa \subseteq \mathcal{K}$.
\end{definition}
\begin{assumption}
Any mesh $\mathcal{T}_h$ admits a covering $\mathcal{T}_{\S}$ in the sense of \eqref{def:covering} such that \\
i) $\max_{\kappa\in\mathcal{T}_h} \textrm{card}\{\kappa^\prime\in\mathcal{T}_h:\kappa^\prime\cap\mathcal{K}\neq\emptyset,\ \mathcal{K}\in\MB{\mathcal{T}_{\S}} \text{ s.t.} \ \kappa\subset\mathcal{K}\}\lesssim 1$ and 
ii) $h_\mathcal{K}\lesssim h_\kappa $ for each pair $\kappa\in\mathcal{T}_h, \ \mathcal{K}\in\MB{\mathcal{T}_{\S}}$ with $\kappa\subset\mathcal{K}$.
\label{ass::2}
\end{assumption}
We also introduce \MB{the norm
\begin{equation}\label{eq:trinorm_tot}
\trinorm{(\bm{v},\bm z, \psi)}_{\rm E}^2 = 
\mathcal{M}((\dot{\bm v},\dot{\bm z},\dot{\psi}), (\dot{\bm v},\dot{\bm z},\dot{\psi})) 
+ \trinorm{(\bm{v},\bm z, \psi)}_{\rm dG}^2 + \mathcal{B}({\bm z},{\bm z}),
\end{equation}
where the seminorm $\trinorm{(\bm{v},\bm z, \psi)}_{\rm dG}^2 = \trinorm{\bm{v}}_{\rm dG,e}^2 + 
\trinorm{\bm{z}}_{\rm dG,p}^2 + \trinorm{\psi}_{\rm dG,a}^2$ is defined by
\begin{align*}
\trinorm{\bm{v}}_{\rm dG,e}^2 &=\norm{\bm{v}}_{\rm dG,e}^2+\norm{\alpha^{-1/2}\llbrace\mathbb{C}:\bm{\epsilon}_h(\bm{v})\rrbrace}_{\mathcal{F}_h^p}^2
&& \forall \bm v\in \bm H^2(\mathcal{T}_h^p),\\
\trinorm{\bm{z}}_{\rm dG,p}^2 &= \MB{|\bm{z}|_{\rm dG,p}^2+
\norm{\gamma^{-1/2}\llbrace(m \nabla_h\cdot\bm z)\rrbrace}_{\mathcal{F}_h^\star}^2}
&& \forall \bm z\in \bm H^2(\mathcal{T}_h^p),\\
\trinorm{\psi}_{\rm dG,a}^2 &=\norm{\psi}_{\rm dG,a}^2+\norm{\chi^{-1/2}\llbrace\rho_a\nabla_h\psi\rrbrace}_{\mathcal{F}_h^a}^2
&& \forall \psi\in H^2(\mathcal{T}_h^a).
\end{align*}
}
For an open bounded polytopic domain $\Sigma\subset\mathbb{R}^d$ and a generic polytopic mesh $\mathcal{T}_h$ over $\Sigma$ satisfying Assumption~\ref{ass::2},
as in  \cite{cangiani2014hp}, we can introduce the Stein extension operator $\tilde{\mathcal{E}}:H^m(\kappa)\rightarrow H^m(\mathbb{R}^d)$ \cite{stein1970singular}, for any $\kappa\in\mathcal{T}_h$ and $m\in\mathbb{N}_0$, such that $\tilde{\mathcal{E}}v|_\kappa=v$ and $\norm{\tilde{\mathcal{E}}v}_{m,\mathbb{R}^d}\lesssim\norm{v}_{m,\kappa}$. The corresponding vector-valued version mapping $\bm H^m(\kappa)$ onto $\bm H^m(\mathbb{R}^d)$ acts component-wise and is denoted in the same way. 
\MB{In what follows, for any $\kappa\in\mathcal{T}_h$, we will denote by $\mathcal{K}_\kappa$ the simplex belonging to $\mathcal{T}_{\S}$ such that $\kappa\subset\mathcal{K}_\kappa$.}

\MB{In order to handle the case of small interface permeability, i.e. $0<\kint<<1$, we make an additional assumption on the discretization. This requirement is consistent with the observations of \cite{chiavassa_lombard_2013}, showing that there is a threshold value $\overline{\kint}$ such that the results for $\kint\le\overline{\kint}$ cannot be distinguished from the sealed pores case $\kint=0$. 
\begin{assumption}
In the case $\kint\in(0,1)$, for each $F\in\mathcal{F}_h^I$ and $\kappa\in\mathcal{T}_h^p$ such that $F\subset\partial\kappa\cap\Gamma_I$, it holds $\zeta(\kint)=\kint^{-1}(1-\kint) \lesssim h_{\kappa}^{-1} p_{p,\kappa}^2$, with the hidden constant independent of $\kint$.
\label{ass::kint_bnd}\RRR{We point out that this assumption is used only for the following theoretical results but it is not needed in practice, cf. Section \ref{sec::results}.}
\end{assumption}
The next Lemma provides the interpolation bounds that are instrumental for the derivation of the a-priori error estimate.}
\begin{lemma}
For any $(\bm v,\bm z,\psi) \in {\bm H}^m(\mathcal{T}_h^p)\times {\bm H}^\ell(\mathcal{T}_h^p) \times H^n(\mathcal{T}_h^a)$, with $m,\ell,n\geq 2$, there exists $(\bm v_I,\bm z_I,\psi_I)\in\bm{V}_h^p\times\bm{V}_h^p\times V_h^a$ such that
\begin{align*}
\trinorm{\bm{v}-\bm{v}_I}_{\rm dG,e}^2 & 
\lesssim
\sum_{\kappa\in\mathcal{T}_h^p}
{\frac{h_\kappa^{2(s_\kappa-1)}}{p_{p,\kappa}^{2m-3}}}\norm{\widetilde{\mathcal{E}}\bm v}_{m,\mathcal{K}_\kappa}^2, \\
\trinorm{\bm{z}-\bm{z}_I}_{\rm dG,p}^2 & 
\lesssim
\sum_{\kappa\in\mathcal{T}_h^p}
{\frac{h_\kappa^{2(r_\kappa-1)}}{p_{p,\kappa}^{2\ell-3}}}\norm{\widetilde{\mathcal{E}}\bm z}_{\ell,\mathcal{K}_\kappa}^2,\\
\trinorm{\psi-\psi_I}_{\rm dG,a}^2 & 
\lesssim
\sum_{\kappa\in\mathcal{T}_h^a}
{\frac{h_\kappa^{2(q_\kappa-1)}}{p_{a,\kappa}^{2n-3}}}\norm{\widetilde{\mathcal{E}}\psi}_{n,\mathcal{K}_\kappa}^2,
\end{align*}
where $s_\kappa = \min(m,p_{p,\kappa}+1)$, $r_\kappa = \min(\ell,p_{p,\kappa}+1)$ and $q_\kappa = \min(n,p_{a,\kappa}+1)$. Moreover, if $(\bm u,\bm w,\varphi) \in  C^1([0,T];\, \bm H^m(\mathcal{T}_h^p)\times \bm H^\ell(\mathcal{T}_h^p)\times H^n(\mathcal{T}_h^a))$, with $m,\ell,n\geq 2$, there exists \MB{$(\bm u_I,\bm w_I,\varphi_I)\in C^1([0,T];\bm{V}_h^p\times \bm{V}_h^p\times V_h^a)$} s.t.:
\begin{equation}\label{eq:interp_est}
\begin{aligned}
\MB{\trinorm{(\bm u-\bm u_I,\bm w-\bm w_I,\varphi-\varphi_I)}_{\rm E}^2}\lesssim&
\sum_{\kappa\in\mathcal{T}_h^p}{\frac{h_\kappa^{2(s_\kappa-1)}}{p_{p,\kappa}^{2m-3}}}\left(\norm{\widetilde{\mathcal{E}}\dot{\bm u}}_{m,\mathcal{K}_\kappa}^2+\norm{\widetilde{\mathcal{E}}\bm u}_{m,\mathcal{K}_\kappa}^2\right)\\+&
\sum_{\kappa\in\mathcal{T}_h^p}{\frac{h_\kappa^{2(r_\kappa-1)}}{p_{p,\kappa}^{2\ell-3}}}\left(\norm{\widetilde{\mathcal{E}}\dot{\bm w}}_{\ell,\mathcal{K}_\kappa}^2+\norm{\widetilde{\mathcal{E}}\bm w}_{\ell,\mathcal{K}_\kappa}^2\right)\\+&
\sum_{\kappa\in\mathcal{T}_h^a}{\frac{h_\kappa^{2(q_\kappa-1)}}{p_{a,\kappa}^{2n-3}}}\left(\norm{\widetilde{\mathcal{E}}\dot{\varphi}}_{n,\mathcal{K}_\kappa}^2+\norm{\widetilde{\mathcal{E}}\varphi}_{n,\mathcal{K}_\kappa}^2\right).
\end{aligned}
\end{equation}
\label{lem::interp2}
\end{lemma}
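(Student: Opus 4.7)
The plan is to construct the interpolant component-wise using the $hp$-version quasi-interpolation operator on polytopic meshes from \cite{cangiani2014hp,CangianiDongGeorgoulisHouston_2017}. Given the covering $\mathcal{T}_{\S}$ from Assumption~\ref{ass::2}, on each $\kappa\in\mathcal{T}_h^p$ this operator delivers $\bm v_I|_\kappa\in[\mathcal{P}_{p_{p,\kappa}}(\kappa)]^d$ satisfying the standard $hp$-bound $\norm{\widetilde{\mathcal{E}}\bm v-\bm v_I}_{j,\kappa}\lesssim h_\kappa^{s_\kappa-j}p_{p,\kappa}^{-(m-j)}\norm{\widetilde{\mathcal{E}}\bm v}_{m,\mathcal{K}_\kappa}$ for $0\le j\le s_\kappa$, and analogously for $\bm z_I$ and $\psi_I$. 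Since this operator is linear and time-independent, its time derivative coincides with the interpolant of $\dot{\bm v}$, which is what will produce the $\norm{\widetilde{\mathcal{E}}\dot{\bm u}}$, $\norm{\widetilde{\mathcal{E}}\dot{\bm w}}$ and $\norm{\widetilde{\mathcal{E}}\dot{\varphi}}$ contributions in \eqref{eq:interp_est}.

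I would then prove the three seminorm estimates separately. Splitting $\trinorm{\bm v-\bm v_I}_{\rm dG,e}^2$ into the volume term $\norm{\mathbb{C}^{1/2}\hspace{-1pt}:\bm\epsilon_h(\bm v-\bm v_I)}_{\Omega_p}^2$, the weighted jump $\norm{\alpha^{1/2}\llbracket\bm v-\bm v_I\rrbracket}_{\mathcal{F}_h^p}^2$, and the dual-weighted average $\norm{\alpha^{-1/2}\llbrace\mathbb{C}:\bm\epsilon_h(\bm v-\bm v_I)\rrbrace}_{\mathcal{F}_h^p}^2$, the first is immediate from the $j=1$ case of the approximation bound since $\mathbb{C}$ is element-wise constant. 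For the face terms I would invoke the polytopic continuous trace inequality of \cite{CangianiDongGeorgoulisHouston_2017}, which yields $\norm{w}_{L^2(\partial\kappa)}^2\lesssim h_\kappa^{-1}\norm{w}_\kappa^2+h_\kappa\norm{\nabla w}_\kappa^2$ for $w\in H^1(\kappa)$; combined with the scaling $\alpha\sim\overline{\mathbb{C}}_\kappa p_{p,\kappa}^2 h_\kappa^{-1}$ and the $L^2/H^1$ approximation bounds above, this produces the required $h_\kappa^{2(s_\kappa-1)}/p_{p,\kappa}^{2m-3}$ factor. The dual term is handled by the same continuous trace inequality applied to $\bm\epsilon_h(\bm v-\bm v_I)\in[H^1(\kappa)]^{d\times d}$, the negative weight $\alpha^{-1/2}$ compensating the extra factor $h_\kappa^{1/2}/p_{p,\kappa}$. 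The bounds on $\trinorm{\bm z-\bm z_I}_{\rm dG,p}^2$ and $\trinorm{\psi-\psi_I}_{\rm dG,a}^2$ follow by the same argument with $\alpha$ replaced by $\gamma$ and $\chi$, respectively. A small subtlety is the set $\mathcal{F}_h^\star$ entering the poroelastic seminorm: when $\kint=0$ the interface faces $\mathcal{F}_h^I$ are included, but the jump there reduces to a one-sided trace from the poroelastic side, so the argument is unchanged.

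To upgrade the seminorm bounds to an estimate on $\trinorm{(\bm u-\bm u_I,\bm w-\bm w_I,\varphi-\varphi_I)}_{\rm E}^2$, I still need to control $\mathcal{M}((\dot{\bm e}^u,\dot{\bm e}^w,\dot{e}^\varphi),(\dot{\bm e}^u,\dot{\bm e}^w,\dot{e}^\varphi))$ and $\mathcal{B}(\bm e^w,\bm e^w)$. Continuity \eqref{eq:M-cont} reduces the first to $\norm{(\dot{\bm e}^u,\dot{\bm e}^w,\dot{e}^\varphi)}_{\Omega_*}^2$, which is bounded by the $j=0$ case of the $hp$-approximation applied to the time derivatives. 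The volume part of $\mathcal{B}$ is similarly dominated by $\norm{\bm e^w}_{\Omega_p}^2$. The interface contribution $\norm{\zeta(\kint)^{1/2}\bm e^w\cdot\bm n_p}_{\Gamma_I}^2$ is where Assumption~\ref{ass::kint_bnd} enters: the bound $\zeta(\kint)\lesssim h_\kappa^{-1}p_{p,\kappa}^2$ rescales that boundary term into exactly the format of the $\gamma$-weighted face jumps already handled, with a constant independent of $\kint$.

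The main obstacle I expect is the dual-weighted averages of the fluxes: these cannot be attacked by the polynomial trace-inverse \eqref{eq::traceinv} because $\bm v-\bm v_I$ is only piecewise $H^m$, so one is forced to use a continuous polytopic trace inequality and to keep careful track of the single $p$-power loss that causes the exponent $2m-3$ (rather than $2m-2$) in the denominator of \eqref{eq:interp_est}. Beyond that, the argument is essentially bookkeeping across the three unknowns, together with the single step specific to the poro-elasto-acoustic coupling, namely the use of Assumption~\ref{ass::kint_bnd} to absorb the $\kint$-dependent interface dissipation into the standard $hp$-framework.
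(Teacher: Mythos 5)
Your proposal is correct and follows essentially the same route as the paper: the $hp$-approximation bounds of Cangiani--Dong--Georgoulis--Houston (their Lemmas 23 and 33, i.e.\ volume estimates plus the continuous polytopic trace inequality for the face and dual-weighted flux terms), combined with Assumption~\ref{ass::kint_bnd} to absorb the $\zeta(\kint)$-weighted interface contribution of $\mathcal{B}$ into the standard face-term format. The only cosmetic difference is that the paper obtains the $\trinorm{\cdot}_{\rm dG,p}$ bound by observing $\trinorm{\cdot}_{\rm dG,p}\lesssim\trinorm{\cdot}_{\rm dG,e}$ and citing \cite[Lemma 5.1]{bonaldi}, whereas you rerun the same argument with $\gamma$ in place of $\alpha$.
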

\begin{proof}
\MB{The first part of the proof readily follows by reasoning as in \cite[Lemma 5.1]{bonaldi} and observing that 
$\trinorm{\cdot}_{\rm dG,p}\lesssim\trinorm{\cdot}_{\rm dG,e}$. 
To infer estimate \eqref{eq:interp_est}, we resort to the $hp$-approximation properties stated in \cite[Lemmas 23 and 33]{CangianiDongGeorgoulisHouston_2017}, implying 
\begin{multline*}
\mathcal{M}((\dot{\bm u}-\dot{\bm u}_I,\dot{\bm w}-\dot{\bm w}_I,\dot{\varphi}-\dot{\varphi}_I),(\dot{\bm u}-\dot{\bm u}_I,\dot{\bm w}-\dot{\bm w}_I,\dot{\varphi}-\dot{\varphi}_I))
\\
\lesssim  \sum_{\kappa\in\mathcal{T}_h^p} \left(
{\frac{h_\kappa^{2s_\kappa}}{p_{p,\kappa}^{2m}}}\norm{\widetilde{\mathcal{E}}\dot{\bm u}}_{m,\mathcal{K}_\kappa}^2 
+ {\frac{h_\kappa^{2r_\kappa}}{p_{p,\kappa}^{2\ell}}}\norm{\widetilde{\mathcal{E}}\dot{\bm w}}_{\ell,\mathcal{K}_\kappa}^2\right)
+\sum_{\kappa\in\mathcal{T}_h^a}{\frac{h_\kappa^{2q_\kappa}}{p_{a,\kappa}^{2n}}}\norm{\widetilde{\mathcal{E}}\dot{\varphi}}_{n,\mathcal{K}_\kappa}^2, 
\end{multline*}
and, owing to \eqref{ass::kint_bnd},
$$
\mathcal{B}(\bm{w}-\bm{w}_I,\bm{w}-\bm{w}_I) \lesssim
\sum_{\kappa_p\in \mathcal{T}_{h,p}^I} \frac{p_{p,\kappa_p}^2}{h_{\kappa_p}} \norm{(\bm{w}-\bm{w}_I)\cdot\bm n}_{\partial\kappa_p}^2
\lesssim
\sum_{\kappa\in\mathcal{T}_h^p}{\frac{h_\kappa^{2r_\kappa-2}}{p_{p,\kappa}^{2\ell-3}}}\norm{\widetilde{\mathcal{E}}\bm w}_{\ell,\mathcal{K}_\kappa}^2.
$$}
\end{proof}
We are now ready to state the main result of this section.
\begin{theorem}[A-priori error estimates] \label{thm::error-estimate}
Let Assumptions~\ref{ass::regular},  \ref{ass::3}, \ref{ass::2}, and \ref{ass::kint_bnd} hold and let the exact solution \RRR{$\mathfrak{U}=(\bm u,\bm w,\varphi)$} of problem \eqref{system} be such that $$\mathfrak{U}\in C^2([0,T];\bm H^m(\mathcal{T}_h^p)\times\bm H^\ell(\mathcal{T}_h^p))\times H^n(\mathcal{T}_h^a)) \MB{\,\cap\, C^1([0,T]; \bm{H}^1_0(\Omega_p)\times\bm W_{\kint}\times H^1_0(\Omega_a)),}$$ with $m,n,\ell\geq 2$ and let $(\bm u_h,\bm w_h,\varphi_h)\in C^2([0,T];\bm V_h^p\times \bm V_h^p\times V_h^a)$ be the solution of the semi-discrete problem \eqref{eq::dgsystem}, with sufficiently large penalty parameters $c_1$, $c_2$ and $c_3$. 
Then, \MB{for any $t\in (0,t]$, the discretization error $\bm E(t)=(\bm e^u, \bm e^w, e^\varphi)(t)$ satisfies}
\begin{align*}
\norm{\bm E(t)}_{\rm E}\lesssim&
\sum_{\kappa\in\mathcal{T}_h^p}{\frac{h_\kappa^{s_\kappa-1}}{p_{p,\kappa}^{m-3/2}}}
\left(
\norm{\widetilde{\mathcal{E}}\dot{\bm u}}_{m,\mathcal{K}_\kappa}\hspace{-1mm}+
\norm{\widetilde{\mathcal{E}}{\bm u}}_{m,\mathcal{K}_\kappa}\hspace{-1mm}+\hspace{-0.5mm}
\int_{0}^{t}\hspace{-0.5mm}{\left[\norm{\widetilde{\mathcal{E}}\ddot{\bm u}}_{m,\mathcal{K}_\kappa}+
\norm{\widetilde{\mathcal{E}}\dot{\bm u}}_{m,\mathcal{K}_\kappa}\right]\hspace{-1mm}(s)\, ds}
\hspace{-0.5mm}\right)\nonumber\\+&
\sum_{\kappa\in\mathcal{T}_h^p}{\frac{h_\kappa^{r_\kappa-1}}{p_{p,\kappa}^{\ell-3/2}}}
\left(
\norm{\widetilde{\mathcal{E}}\dot{\bm w}}_{\ell,\mathcal{K}_\kappa}+
\norm{\widetilde{\mathcal{E}}{\bm w}}_{\ell,\mathcal{K}_\kappa}+
\int_{0}^{t}{\left[\norm{\widetilde{\mathcal{E}}\ddot{\bm w}}_{\ell,\mathcal{K}_\kappa}+
\norm{\widetilde{\mathcal{E}}\dot{\bm w}}_{\ell,\mathcal{K}_\kappa}\right]\hspace{-1mm}(s)\, ds}
\hspace{-0.5mm}\right)\nonumber\\+&
\sum_{\kappa\in\mathcal{T}_h^a}{\frac{h_\kappa^{q_\kappa-1}}{p_{a,\kappa}^{n-3/2}}}
\left(
\norm{\widetilde{\mathcal{E}}\dot{\varphi}}_{n,\mathcal{K}_\kappa}+
\norm{\widetilde{\mathcal{E}}{\varphi}}_{n,\mathcal{K}_\kappa}+
\int_{0}^{t}{\left[\norm{\widetilde{\mathcal{E}}\ddot{\varphi}}_{n,\mathcal{K}_\kappa}+
\norm{\widetilde{\mathcal{E}}\dot{\varphi}}_{n,\mathcal{K}_\kappa} \right]\hspace{-1mm}(s)\, ds}\hspace{-0.5mm}\right)\hspace{-0.5mm},
\end{align*}
\MB{where the hidden constant depends on time $t$ and on the material properties, but is independent of the discretization parameters and of $\kint$.}
\end{theorem}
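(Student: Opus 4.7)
The plan is to use the standard error-splitting strategy for discontinuous Galerkin discretizations of second-order-in-time problems. Write $\bm E = (\mathfrak{U} - \mathfrak{U}_I) + (\mathfrak{U}_I - \mathfrak{U}_h) = \bm E_I + \bm E_h$, where $\mathfrak{U}_I = (\bm u_I,\bm w_I,\varphi_I)$ is the interpolant provided by Lemma \ref{lem::interp2}. The interpolation part $\bm E_I$ is controlled directly in the energy norm $\norm{\cdot}_{\rm E}$ via the bound \eqref{eq:interp_est} (which in fact delivers the stronger $\trinorm{\cdot}_{\rm E}$-estimate), so the main task reduces to bounding the discrete component $\bm E_h \in C^2([0,T];\bm V_h^p \times \bm V_h^p \times V_h^a)$. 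By strong consistency, $\bm E_h$ satisfies the DG problem \eqref{eq::dgsystem} with right-hand side generated by the $\bm E_I$-terms moved over from the error equation \eqref{eq::error_eq}.

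The core of the argument is an energy identity. Testing the equation for $\bm E_h$ with $\dot{\bm E}_h = (\dot{\bm e}_h^u,\dot{\bm e}_h^w,\dot{e}_h^\varphi)$, the couplings $\mathcal{C}_h^p$ and $\mathcal{C}_h^a$ on the left-hand side cancel thanks to the skew-symmetry \eqref{eq::bilineardg2}, and integrating from $0$ to $t$ yields
$$
\tfrac{1}{2}\bigl[\mathcal{M}(\dot{\bm E}_h,\dot{\bm E}_h)+\mathcal{A}_h(\bm E_h,\bm E_h)\bigr]_{0}^{t} + \int_{0}^{t}\mathcal{B}(\dot{\bm e}_h^w,\dot{\bm e}_h^w)\,ds = -\int_{0}^{t}\mathcal{R}(\bm E_I;\dot{\bm E}_h)\,ds,
$$
where $\mathcal{R}$ collects $\mathcal{M}(\ddot{\bm E}_I,\dot{\bm E}_h)$, $\mathcal{A}_h(\bm E_I,\dot{\bm E}_h)$, $\mathcal{B}(\dot{\bm e}_I^w,\dot{\bm e}_h^w)$, $\mathcal{C}_h^p(\dot e_I^\varphi,\dot{\bm e}_h^u+\dot{\bm e}_h^w)$ and $\mathcal{C}_h^a(\dot{\bm e}_I^u+\dot{\bm e}_I^w,\dot e_h^\varphi)$. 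The mass, friction, and coupling contributions are handled by Cauchy--Schwarz and Young's inequality. The stiffness term $\mathcal{A}_h(\bm E_I,\dot{\bm E}_h)$ would nominally require control of $\ddot{\bm E}_h$; this is circumvented by the classical integration-by-parts-in-time trick
$$
\int_{0}^{t}\mathcal{A}_h(\bm E_I,\dot{\bm E}_h)\,ds = \mathcal{A}_h(\bm E_I,\bm E_h)(t) - \mathcal{A}_h(\bm E_I,\bm E_h)(0) - \int_{0}^{t}\mathcal{A}_h(\dot{\bm E}_I,\bm E_h)\,ds,
$$
combined with continuity of $\mathcal{A}_h$ in the triple-bar norm (Lemma \ref{lem::cont}) and Young's inequality with a small parameter, so that the resulting $\trinorm{\bm E_h}_{\rm dG}$-factor is absorbed into the left-hand side via coercivity.

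The main obstacle I expect is uniformity with respect to $\kint$. Both the interface part of $\mathcal{B}$ and the couplings $\mathcal{C}_h^{p},\mathcal{C}_h^{a}$ involve normal traces on $\Gamma_I$ weighted by $\zeta(\kint)^{1/2}$, which blows up as $\kint \to 0^+$. To bypass this I would invoke Assumption \ref{ass::kint_bnd} to estimate $\zeta(\kint)\lesssim h_\kappa^{-1}p_{p,\kappa}^{2}$, so that the weighted $hp$-trace and inverse estimates (Assumption \ref{ass::regular} and \eqref{eq::traceinv}) apply to the $\Gamma_I$-contributions exactly as they do to the standard interior DG faces; this is precisely what yields the optimal rate $h_\kappa^{r_\kappa-1}p_{p,\kappa}^{3/2-\ell}$ in the $\bm w$-component and keeps the hidden constants independent of $\kint$.

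After these manipulations the right-hand side is bounded by a sum of broken Sobolev seminorms of $\bm u,\bm w,\varphi$ and their first and second time derivatives (coming from Lemma \ref{lem::interp2} applied to $\bm E_I$ and $\dot{\bm E}_I$), plus $\int_0^t \norm{\bm E_h(s)}_{\rm E}^2\,ds$ via Young's inequality; here we also use $\mathcal{B}(\bm e_h^w,\bm e_h^w)(t)\le\mathcal{B}(\bm e_h^w,\bm e_h^w)(0)+\int_0^t\mathcal{B}(\dot{\bm e}_h^w,\dot{\bm e}_h^w)\,ds$ as in the proof of Theorem \ref{teo:stability}. Gronwall's lemma then closes the estimate for $\norm{\bm E_h}_{\rm E}$. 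The term $\norm{\bm E_h(0)}_{\rm E}$ is of interpolation order thanks to the $L^2$-projected discrete initial data together with an inverse estimate, and finally the triangle inequality $\norm{\bm E}_{\rm E}\le\norm{\bm E_I}_{\rm E}+\norm{\bm E_h}_{\rm E}$ combined with Lemma \ref{lem::interp2} yields the stated bound.
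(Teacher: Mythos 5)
Your proposal follows essentially the same route as the paper: the splitting $\bm E=\bm E_I+\bm E_h$ with the interpolant of Lemma~\ref{lem::interp2}, testing the error equation with $\dot{\bm E}_h$, skew-symmetry of the couplings, the integration-by-parts-in-time (Leibniz) trick on $\mathcal{A}_h(\bm E_I,\dot{\bm E}_h)$, the trace-inverse inequality \eqref{eq::traceinv} for the interface coupling terms, Gronwall, and Assumption~\ref{ass::kint_bnd} to keep the $\zeta(\kint)$-weighted contributions under control. The only slip is cosmetic: the forms $\mathcal{C}_h^p,\mathcal{C}_h^a$ in \eqref{eq::bilineardg2} carry no $\zeta(\kint)$ weight (only $\mathcal{B}$ and the $\bm W_{\kint}$-norm do), so those terms are handled exactly as you describe but without needing Assumption~\ref{ass::kint_bnd}.
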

\begin{proof}
For any time \MB{$t\in (0,T]$}, let  $(\bm u_I,\bm w_I,\varphi_I)(t)\in\bm{V}_h^p\times \bm{V}_h^p\times V_h^a$  be the interpolants defined in \eqref{lem::interp2}. We split the error as $\bm E(t)=\bm E_I(t)-\bm E_h(t)$, where 
\begin{align*}
\bm E_I(t)&=(\bm e_I^u,\bm e_I^w,e_I^\varphi)(t)=(\bm u-\bm u_I,\bm w-\bm w_I,\varphi-\varphi_I)(t),\\
 \bm E_h(t)&=(\bm e_h^u,\bm e_h^w,e_h^\varphi)(t)=(\bm u_h-\bm u_I,\bm w_h-\bm w_I,\varphi_h-\varphi_I)(t).
\end{align*}
From the triangle inequality  we have
\RRR{
$\norm{\bm{E}(t)}_{ \rm{E}}^2\leq
\norm{\bm{E}_h(t)}_{\rm{E}}^2+
\norm{\bm{E}_I(t)}_{\rm{E}}^2,
$}
and Lemma~\ref{lem::interp2} can be used to bound the term \RRR{$\norm{\bm{E}_I(t)}_{\rm{E}}$}.  As for the term \RRR{$\norm{\bm{E}_h(t)}_{\rm{E}}$},
by taking $(\bm v,\bm \xi,\psi)=(\dot{\bm e}_h^u,\dot{\bm e}_h^w,\dot{e}_h^\varphi) \in \bm{V}_h^p\times \bm{V}_h^p\times V_h^a$ as test functions in \eqref{eq::error_eq},
taking into account that $\bm E=\bm E_I-\bm E_h$, neglecting the coupling terms thanks to skew-symmetry and collecting a first time derivative, identity \eqref{eq::error_eq} can be rewritten as
\MB{\begin{multline}\label{eq::errors_main}
\frac{1}{2}\frac{d}{dt}\left( \mathcal{M}(\dot{\bm{E}}_h, \dot{\bm{E}}_h)  +
\mathcal{A}_h(\bm{E}_h,\bm{E}_h) \right)
+\mathcal{B}(\dot{\bm e}_h^w,\dot{\bm e}_h^w)
=\mathcal{M}(\ddot{\bm{E}}_I, \dot{\bm{E}}_h) - \mathcal{A}_h(\dot{\bm{E}}_I,\bm{E}_h)  \\ 
+\frac{d}{dt}\mathcal{A}_h(\bm{E}_I,\bm{E}_h) +\mathcal{B}(\dot{\bm e}_I^w,\dot{\bm e}_h^w)
+\mathcal{C}_h^p(\dot{e}_I^\varphi,\dot{\bm e}_h^u+ \dot{\bm e}_h^w) 
+\mathcal{C}_h^a(\dot{\bm{e}}_I^u + \dot{\bm{e}}_I^w,\dot{e}_h^\varphi),
\end{multline}
where we have used Leibniz's rule on the term $\mathcal{A}_h(\bm{E}_I,\dot{\bm{E}}_h)$. Integrating  \eqref{eq::errors_main} between $0$ and $t\le T$ and observing that 
$\bm E_h(0) = (\bm e_h^u(0),\bm{e}_h^w(0),e_h^\varphi(0))=\bm{0}$, it is inferred that
\begin{multline*}
\mathcal{M}(\dot{\bm{E}}_h, \dot{\bm{E}}_h) (t) +
\mathcal{A}_h(\bm{E}_h,\bm{E}_h) (t) +2 \int_{0}^{t}\mathcal{B}(\dot{\bm e}_h^w,\dot{\bm e}_h^w)(s) \,ds \\ 
= 2\int_{0}^t\mathcal{M}(\ddot{\bm{E}}_I,\dot{\bm{E}}_h)(s) \,ds
- 2\int_{0}^t\mathcal{A}_h(\dot{\bm{E}}_I,\bm{E}_h)(s)\,ds
+ 2\int_0^t \mathcal{B}(\dot{\bm e}_I^w,\dot{\bm e}_h^w)(s) \, ds \\ 
+ 2 \mathcal{A}_h(\bm{E}_I,\bm{E}_h) (t)  
+ 2 \int_0^t  \left(\mathcal{C}_h^p(\dot{e}_I^\varphi,\dot{\bm e}_h^u+ \dot{\bm e}_h^w)(s) 
+\mathcal{C}_h^a(\dot{\bm{e}}_I^u + \dot{\bm{e}}_I^w,\dot{e}_h^\varphi)(s) \right)\, ds.
\end{multline*}
Applying the Cauchy--Schwarz and Young inequalities on the third and fourth terms in the right-hand side of the previous identity, we obtain
\begin{equation}\label{eq::errors_main2}
\begin{aligned}
\circled{1} &= 
\mathcal{M}(\dot{\bm{E}}_h, \dot{\bm{E}}_h) (t) +\mathcal{A}_h(\bm{E}_h,\bm{E}_h) (t)
+ \int_{0}^{t}\mathcal{B}(\dot{\bm e}_h^w,\dot{\bm e}_h^w)(s) \,ds \\ 
&\le 4\int_{0}^t\mathcal{M}(\ddot{\bm{E}}_I,\dot{\bm{E}}_h)(s) \,ds 
- 4\int_{0}^t\mathcal{A}_h(\dot{\bm{E}}_I,\bm{E}_h)(s)\,ds 
+ 2\int_0^t \mathcal{B}(\dot{\bm e}_I^w,\dot{\bm e}_I^w)(s) \, ds\\ 
&+ 4 \mathcal{A}_h(\bm{E}_I,\bm{E}_I) (t)   
+ 4 \int_0^t  \hspace{-1mm}\left(\mathcal{C}_h^p(\dot{e}_I^\varphi,\dot{\bm e}_h^u+ \dot{\bm e}_h^w)
+\mathcal{C}_h^a(\dot{\bm{e}}_I^u + \dot{\bm{e}}_I^w,\dot{e}_h^\varphi) \right) (s)\, ds =  \circled{2}.
\end{aligned}
\end{equation}
Now, using Lemma \ref{lem::cont} together with the fundamental theorem of calculus we estimate the left hand side as 
$\circled{1} \geq \left(\mathcal{M}(\dot{\bm{E}}_h, \dot{\bm{E}}_h)  +
\mathcal{A}_h(\bm{E}_h,\bm{E}_h) + \mathcal{B}({\bm e}_h^w,{\bm e}_h^w)\right)(t) = \norm{\bm{E}_h(t)}^2_{\rm E}$. 
Plugging this into \eqref{eq::errors_main2}, using again the Young inequality and Lemma \ref{lem::cont} to bound the second and fourth terms in $\circled{2}$, and recalling definition \eqref{eq:trinorm_tot}, yields 
\begin{equation}\label{eq::errors_main3}
\begin{aligned}
\norm{\bm{E}_h(t)}^2_{\rm E} &\le 
2 \int_0^t \hspace{-1mm}\norm{{\bm{E}}_h(s)}^2_{\rm E} 
+ \overbrace{( \mathcal{M}(\ddot{\bm{E}}_I, \ddot{\bm{E}}_I) + \trinorm{\dot{\bm E}_I}_{\rm dG}^2 
+ \mathcal{B}(\dot{\bm e}_I^w,\dot{\bm e}_I^w))(s)}^{\trinorm{{\dot{\bm{E}}}_I(s)}^2_{\rm E}} ds\\
&+ 4\, \trinorm{\bm{E}_I (t)}_{\rm dG}^2
+ 4 \int_0^t \hspace{-1mm}\left(\mathcal{C}_h^p(\dot{e}_I^\varphi,\dot{\bm e}_h^u+ \dot{\bm e}_h^w)
+\mathcal{C}_h^a(\dot{\bm{e}}_I^u + \dot{\bm{e}}_I^w,\dot{e}_h^\varphi) \right) (s)\, ds.
\end{aligned}
\end{equation}
Now, recalling the definitions of the coupling bilinear forms $\mathcal{C}_h^p$ and $\mathcal{C}_h^a$ and using the Cauchy-Schwarz inequality followed by the trace-inverse inequality \eqref{eq::traceinv}, we infer}
\begin{align*}
\mathcal{C}_h^p(\MB{\dot{e}_I^\varphi},\MB{\dot{\bm e}_h^u+\dot{\bm e}_h^w})
& \lesssim \sum_{F\in\mathcal{F}_h^I}{\norm{\rho_a\dot{e}_I^\varphi}_F\norm{\dot{\bm e}_h^u+\dot{\bm e}_h^w}_F}  \lesssim
\sum_{\kappa_p\in\mathcal{T}_{h,p}^I,\,\kappa_a\in\mathcal{T}_{h,a}^I}{\norm{\dot{e}_I^\varphi}_{\partial \kappa_a}\norm{\dot{\bm e}_h^u+\dot{\bm e}_h^w}_{\partial \kappa_p}}  \\
&\lesssim \sum_{\kappa_p\in\mathcal{T}_{h,p}^I,\,
\kappa_a\in\mathcal{T}_{h,a}^I}{p_{p,\kappa_p}h_{\kappa_p}^{-1/2}\norm{\dot{e}_I^\varphi}_{\partial \kappa_a}}
\RRR{(\norm{\dot{{\bm e}}_h^u}_{\Omega_p} + \norm{\dot{\bm e}_h^w}_{\Omega_p})}
\end{align*}
where, to infer the last bound, we have also used Assumption~\ref{ass::3}. Therefore, we have
\begin{align*}
\int_0^t\hspace{-.5mm} \mathcal{C}_h^p(\MB{\dot{e}_I^\varphi},\dot{\bm e}_h^u+\dot{\bm e}_h^w)(s) \,ds & \lesssim \int_0^t\hspace{-1mm} \left( \sum_{\kappa\in\mathcal{T}_{h,a}^I}{p_{a,\kappa}h_{\kappa}^{-1/2}\norm{\dot{e}_I^\varphi(s)}_{\partial \kappa}}
   \right)\RRR{(\norm{\dot{{\bm e}}_h^u}_{\Omega_p} + \norm{\dot{\bm e}_h^w}_{\Omega_p})(s)}\, ds  \\ & \stackrel{\text{def}}{=} \int_0^t \MB{\mathcal{I}_h^a}(\dot{e}_I^\varphi(s))\,
   \RRR{(\norm{\dot{{\bm e}}_h^u(s)}_{\Omega_p} + \norm{\dot{\bm e}_h^w(s)}_{\Omega_p})}\, ds. \nonumber 
\end{align*}
Proceeding in the same way, we can conclude that
\begin{align*}
\int_0^t\hspace{-.5mm} \MB{\mathcal{C}_h^a(\dot{\bm e}_I^u+\dot{\bm e}_I^w}, \dot{e}_h^\varphi)(s) \,ds & 
\lesssim \int_0^t \hspace{-1mm}\left( \sum_{\kappa\in\mathcal{T}_{h,p}^I}p_{p,\kappa}h_{\kappa}^{-1/2}
 (\norm{\dot{{\bm e}}_I^u}_{\partial \kappa}+ \norm{\dot{{\bm e}}_I^w}_{\partial \kappa})(s)
 \right)
\RRR{\norm{\dot{e}_h^\varphi(s)}_{\Omega_a}}
   \, ds  \\ & \stackrel{\text{def}}{=} \MB{\int_0^t (\mathcal{I}_h^p(\dot{{\bm e}}_I^u(s)) + \mathcal{I}_h^p(\dot{{\bm e}}_I^w(s)) )\norm{\dot{e}_h^\varphi(s)}_{\Omega_a}} \, ds,
\end{align*}
Collecting the two previous bounds and applying Young's inequality \MB{together with inequality \eqref{eq:M-coer}, it is inferred that
\begin{multline*}
\int_0^t \left(\mathcal{C}_h^p(\dot{e}_I^\varphi,\dot{\bm e}_h^u+ \dot{\bm e}_h^w)
+\mathcal{C}_h^a(\dot{\bm{e}}_I^u + \dot{\bm{e}}_I^w,\dot{e}_h^\varphi) \right) (s)\, ds \\
\lesssim
\int_0^t\trinorm{{\bm{E}}_h(s)}^2_{\rm E}\, ds 
+ \int_0^t \hspace{-1mm}\left( \mathcal{I}_h^a(\dot{e}_I^\varphi)^2  + \mathcal{I}_h^p(\dot{{\bm e}}_I^u)^2 + \mathcal{I}_h^p(\dot{{\bm e}}_I^w)^2 \right)(s)\, ds.
\end{multline*}
Hence, plugging the previous bound into \eqref{eq::errors_main3} and using Gronwall's Lemma, we get
$$
\norm{\bm{E}_h(t)}^2_{\rm E} \lesssim 
\trinorm{\bm{E}_I(t)}^2_{\rm E} +\int_0^t\trinorm{\dot{\bm{E}}_I(s)}^2_{\rm E}\, ds 
+ \int_0^t \hspace{-1mm}\left( \mathcal{I}_h^a(\dot{e}_I^\varphi)^2  + \mathcal{I}_h^p(\dot{{\bm e}}_I^u)^2 + \mathcal{I}_h^p(\dot{{\bm e}}_I^w)^2 \right)(s)\, ds,
$$
To estimate the terms on the right hand side, we make use of Lemma~\ref{lem::interp2} and the following bounds inferred from \cite[Lemma 33]{CangianiDongGeorgoulisHouston_2017}:}
\begin{gather*}
\mathcal{I}_h^a(\dot{e}_I^\varphi)^2\lesssim
\sum_{\kappa\in\mathcal{T}^a_{h,I}}\frac{h_\kappa^{2q_\kappa-2}}{p_{a,\kappa}^{2n-3}}
\norm{\widetilde{\mathcal{E}}\dot{\varphi}}^2_{n,\mathcal{K}_\kappa}\\
\mathcal{I}_h^p(\dot{\bm e}_I^u)^2+
\mathcal{I}_h^p(\dot{\bm e}_I^w)^2
\lesssim
\sum_{\kappa\in\mathcal{T}^p_{h,I}}\frac{h_\kappa^{2s_\kappa-2}}{p_{p,\kappa}^{2m-3}}
\norm{\widetilde{\mathcal{E}}\dot{\bm u}}_{m,\mathcal{K}_\kappa}^2	+
\sum_{\kappa\in\mathcal{T}^p_{h,I}}\frac{h_\kappa^{2r_\kappa-2}}{p_{p,\kappa}^{2\ell-3}}
\norm{\widetilde{\mathcal{E}}\dot{\bm w}}^2_{\ell,\mathcal{K}_\kappa}.
\end{gather*}
As a result, the thesis follows.
\end{proof}

\begin{corollary}
Under the hypotheses of Theorem \ref{thm::error-estimate},
assume that $h\approx h_\kappa$ for any $\kappa \in \mathcal{T}_h^p \cup \mathcal{T}_h^a$, $p_{p,\kappa} = p$ for any $\kappa \in \mathcal{T}_h^p$ and 
$p_{a,\kappa} = q$ for any $\kappa \in \mathcal{T}_h^a$. Then, if 
 $\bm u\in C^2([0,T];\bm H^m(\Omega_p))$, $\bm w\in C^2([0,T];\bm H^\ell(\Omega_p))$ and $\varphi\in C^2([0,T]; H^n(\Omega_a))$, with $m,\ell\geq p +1$, $n\geq q+1$ the error estimate of Theorem \ref{thm::error-estimate} reads
\begin{align*}
\norm{\bm E(t)}_{\rm E}\lesssim&
\frac{h^{p}}{p^{m-3/2}}
\left(
\norm{\widetilde{\mathcal{E}}\dot{\bm u}}_{m,\mathcal{K}_\kappa}+
\norm{\widetilde{\mathcal{E}}{\bm u}}_{m,\mathcal{K}_\kappa}+
\int_{0}^{t}{\left[\norm{\widetilde{\mathcal{E}}\ddot{\bm u}}_{m,\mathcal{K}_\kappa}+
\norm{\widetilde{\mathcal{E}}\dot{\bm u}}_{m,\mathcal{K}_\kappa}\right](s)\, ds}
\right)\nonumber\\+&
\frac{h^{p}}{p^{\ell-3/2}}
\left(
\norm{\widetilde{\mathcal{E}}\dot{\bm w}}_{\ell,\mathcal{K}_\kappa}+
\norm{\widetilde{\mathcal{E}}{\bm w}}_{\ell,\mathcal{K}_\kappa}+
\int_{0}^{t}{\left[\norm{\widetilde{\mathcal{E}}\ddot{\bm w}}_{\ell,\mathcal{K}_\kappa}+
\norm{\widetilde{\mathcal{E}}\dot{\bm w}}_{\ell,\mathcal{K}_\kappa}\right](s)\, ds}
\right)\nonumber\\+&
{\frac{h^{q}}{q^{n-3/2}}}
\left(
\norm{\widetilde{\mathcal{E}}\dot{\varphi}}_{n,\mathcal{K}_\kappa}+
\norm{\widetilde{\mathcal{E}}{\varphi}}_{n,\mathcal{K}_\kappa}+
\int_{0}^{t}{\left[\norm{\widetilde{\mathcal{E}}\ddot{\varphi}}_{n,\mathcal{K}_\kappa}+
\norm{\widetilde{\mathcal{E}}\dot{\varphi}}_{n,\mathcal{K}_\kappa}\right](s)\, ds} \right),
\end{align*}
\MB{where the hidden constant depends on time $t$ and on the material properties, but is independent of the discretization parameters and $\kint$.}
The above bounds are optimal in $h$ and suboptimal in $p$ and $q$ by a factor $\frac12$, see \cite{PerugiaLDG}.
\end{corollary}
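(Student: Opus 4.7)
The plan is to obtain this corollary as a direct specialization of Theorem \ref{thm::error-estimate}, since the uniformity assumptions on the mesh size and polynomial degree collapse all the element-dependent quantities in the error bound to uniform ones, and the global Sobolev regularity allows the sums over elements to be controlled by global norms of the exact solution.

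First, I would apply Theorem \ref{thm::error-estimate} directly, which is permissible because the Sobolev regularity $\bm u\in C^2([0,T];\bm H^m(\Omega_p))$, $\bm w\in C^2([0,T];\bm H^\ell(\Omega_p))$, $\varphi\in C^2([0,T];H^n(\Omega_a))$ implies in particular the broken regularity required by the theorem, and the assumptions on the boundary conditions ensure membership in the continuity space $\bm H^1_0(\Omega_p)\times\bm{W}_{\kint}\times H^1_0(\Omega_a)$. Under the uniformity hypotheses $p_{p,\kappa}=p$ and $p_{a,\kappa}=q$, together with the regularity bounds $m,\ell\ge p+1$ and $n\ge q+1$, the indices appearing in the estimate of Theorem \ref{thm::error-estimate} simplify to $s_\kappa=\min(m,p+1)=p+1$, $r_\kappa=\min(\ell,p+1)=p+1$, and $q_\kappa=\min(n,q+1)=q+1$. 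Consequently, $h_\kappa^{s_\kappa-1}/p_{p,\kappa}^{m-3/2}\approx h^p/p^{m-3/2}$, and analogously for the other two factors, so the $h$- and $p$-dependent prefactors can be pulled outside of the sums over elements.

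Next, I would handle the sums of local Stein-extended Sobolev norms over the covering $\mathcal{T}_{\S}$. By Assumption~\ref{ass::2}, each simplex $\mathcal{K}\in\mathcal{T}_{\S}$ overlaps only a uniformly bounded number of elements $\kappa\subset\mathcal{K}$, so the finite-overlap property yields
\[
\sum_{\kappa\in\mathcal{T}_h^p}\norm{\widetilde{\mathcal{E}}\bm u}_{m,\mathcal{K}_\kappa}^2\lesssim \norm{\widetilde{\mathcal{E}}\bm u}_{m,\mathbb{R}^d}^2\lesssim\norm{\bm u}_{m,\Omega_p}^2,
\]
and the analogous inequalities for $\bm w$ and $\varphi$ (as well as for their first and second time derivatives, uniformly in $t\in[0,T]$). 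After extracting the constant prefactors $h^p/p^{m-3/2}$, $h^p/p^{\ell-3/2}$, $h^q/q^{n-3/2}$, what remains is a global Sobolev norm of $\bm u$, $\bm w$, $\varphi$ and of their time derivatives, and a time integral of the same, which is exactly the form of the stated estimate (with the global norms expressed via the Stein extension, as in Theorem~\ref{thm::error-estimate}).

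There is no real obstacle here: the proof is a bookkeeping exercise of substituting the uniform parameters into the $hp$-version bound and invoking the finite-overlap property of the covering to pass from element-wise contributions to global norms. The only minor subtlety I would take care with is ensuring that the constant hidden in $\lesssim$ continues to be independent of $\kint$, which is inherited directly from Theorem~\ref{thm::error-estimate} (and ultimately from Assumption~\ref{ass::kint_bnd}), and of $h$, $p$, $q$, which is inherited from the constant-dependence tracking throughout Lemma~\ref{lem::interp2} and the hp-approximation lemmas of \cite{CangianiDongGeorgoulisHouston_2017}.
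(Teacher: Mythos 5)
Your proposal is correct and follows exactly the route the paper intends: the corollary is stated without a separate proof precisely because it is the direct specialization of Theorem \ref{thm::error-estimate} obtained by substituting $s_\kappa=r_\kappa=p+1$, $q_\kappa=q+1$ and $h_\kappa\approx h$, with the element-wise sums controlled via the finite-overlap property of the covering in Assumption~\ref{ass::2}. Your additional care about tracking the independence of the hidden constant from $\kint$ and the discretization parameters is consistent with the statement and adds nothing that conflicts with the paper.
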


\section{Time discretization}\label{sec::timedis}
\RRR{To integrate in time equation \eqref{eq::algebraic}, we first discretize the interval} $[0,T]$  by introducing a timestep $\Delta t>0$, such that $\forall \ k\in\mathbb{N}$, $t_{k+1}-t_k=\Delta t$ and define $\bm X^k$ as  $\bm X^k=\bm X(t^k)$, with $\bm{X}=
[U, W, \Phi]^T$. 
\RRR{Next, we rewrite} equation \eqref{eq::algebraic} in compact form as
$\bm{A}\ddot{\bm{X}}+\bm{B}\dot{\bm{X}}+\bm{C}\bm{X}=\bm{F}$ and get
\begin{equation}\label{eq:2ndorder_time}
\ddot{\bm{X}}=\bm{A}^{-1}(\bm{F}-\bm{B}\dot{\bm{X}}-\bm{C}\bm{X})=
\bm{A}^{-1}\bm{F}-\bm{A}^{-1}\bm{B}\dot{\bm{X}}-\bm{A}^{-1}\bm{C}\bm{X}=\mathcal{L}(t,\bm{X},\dot{\bm{X}}),
\end{equation}
\RRR{Finally, to integrate in time \eqref{eq:2ndorder_time} we can apply the Newmark$-\beta$ or the leap-frog scheme as follows.} 
The Newmark$-\beta$ scheme is defined by introducing a Taylor expansion for displacement and velocity, respectively:
\begin{equation}
\begin{cases}
\bm{X}^{k+1}=\bm{X}^k+\Delta t \bm{Z}^k+\Delta t^2(\beta_N\mathcal{L}^{k+1}+(\frac{1}{2}-\beta_N)\mathcal{L}^{k}),\\[5pt]
\bm{Z}^{k+1}=\bm{Z}^k+\Delta t(\gamma_N\mathcal{L}^{k+1}+(1-\gamma_N)\mathcal{L}^k),
\end{cases}
\label{eq::newmark_taylor}
\end{equation}
where $\bm{Z}^k=\dot{\bm{X}}(t^k)$, $\mathcal{L}^k=\mathcal{L}(t^k,\bm X^k,\bm Z^k)$ and the Newmark parameters $\beta_N$ and $\gamma_N$  satisfy, the following constraints $0\leq\gamma_N\leq 1$, $0\leq2\beta_N\leq1$.
The typical choices of parameters are $\gamma_N=1/2$ and $\beta_N=1/4$, for which the scheme is unconditionally stable and second order accurate. 
Finally, by plugging the definition of $\mathcal{L}$ into \eqref{eq::newmark_taylor}, for $k\geq 0$, the time integration reduces to:
\begin{align*}
\begin{bmatrix}
\bm{A}+\Delta t^2\beta_N\bm{C}&\Delta t^2\beta_N\bm{B}\\
\gamma_N\Delta t\bm{C}&\bm{A}+\gamma_N\Delta t\bm{B}
\end{bmatrix}
\begin{bmatrix}
\bm{X}^{k+1}\\ \bm{Z}^{k+1}
\end{bmatrix}   = 
  \begin{bmatrix}
\bm{A}-\Delta t^2\widetilde{\beta}_N\bm{C}&\Delta t\bm{A}-\Delta t^2\widetilde{\beta}_N\bm{B}\\
-\widetilde{\gamma}_N\Delta t\bm{C}&\bm{A}-\widetilde{\gamma}_N\Delta t\bm{B}
\end{bmatrix}
\begin{bmatrix}
\bm{X}^{k}\\\bm{Z}^{k}
\end{bmatrix} \\  +
\begin{bmatrix}
\Delta t^2\beta_N\bm{F}^{k+1}+ \Delta t^2\widetilde{\beta}_N\bm{F}^{k}\\\gamma_N\Delta t \bm{F}^{k+1} + \widetilde{\gamma}_N\Delta t \bm{F}^{k} \nonumber
\end{bmatrix},
\end{align*}
where $\widetilde{\beta}_N=(\frac{1}{2}-\beta_N)$ and $\widetilde{\gamma}_N = (1-\gamma_N)$.
\RRR{By applying the leap-frog scheme to  \eqref{eq:2ndorder_time} we get
\begin{equation}\label{eq::leapfrog1}
(\bm{A}+\frac{\Delta t^2}{2}\bm{B}) \bm{X}^{k+1} = \Delta t^2  \bm{F}^{k} + (2\bm{A} - \Delta t^2 \bm{C})\bm{X}^k + (\frac{\Delta t}{2}\bm{B} - \bm{A})\bm{X}^{k-1}, 
\end{equation}
for $k\geq 1$ with initial step 
\begin{equation}\label{eq::leapfrog2}
\bm{A}\bm{X}^{1} = (\bm{A} - \frac{\Delta t^2}{2} C)\bm{X}^0 + (\Delta t \bm{A} - \frac{\Delta t^2}{2}\bm{B})\bm{Z}^0 + \frac{\Delta t^2}{2}\bm{F}^0.
\end{equation}
Recall that \eqref{eq::leapfrog1}--\eqref{eq::leapfrog2} is explicit and second order accurate.
\begin{remark}
The leap-frog method is often applied to wave propagation problems due to its ease of implementation, the reduced size of the system (compared to a Newmark-type scheme), and because typically the matrix of the linear system to be solved is easily invertible. The latter in fact turns out to be diagonal or block-diagonal when using a dG method for the approximation in space. We note that in equation \eqref{eq::leapfrog1} this does not occur due to the coupling conditions at the interface between the poro-elastic and acoustic domains.
As a further constraint, the fact that in poroelastic-acoustic  materials there is an additional compressional wave of second kind (slow P-wave) to be correctly propagated has an impact on the time integration scheme. Indeed, as a further outcome of the model, the amplitudes of the wavefield are attenuated because of energy loss due to the presence of a viscous fluid. In the case of low frequencies and a viscous fluid, the wave equations become stiff. In other words, the slow P-wave becomes the diffusive
mode, which dominates the character of the equation and drastically restricts the stability condition for  explicit methods. For these reasons we prefer to use an implicit time scheme, cf. also  \cite{chiavassa_lombard_2013,delapuente2008}.
\end{remark}
}

\section{Numerical results}\label{sec::results}
Numerical implementation has been carried out with \textsc{Matlab}. Meshes have been generated through the \texttt{polymesher} software, cf. \cite{paulino}.
\subsection*{Test case 1}
The model problem is solved in  $\Omega=(-1,1)\times(0,1)$, on a sequence of \textit{polygonal meshes} as the one shown in Figure~\ref{fig::mesh}, and with physical parameters shown in Table~\ref{param}.  For the first test case, we choose as exact solution
 \begin{align*}
& \bm{u}(x,y;t)=
\begin{pmatrix}
x^2 \cos(\frac{\pi x}{2}) \sin(\pi x)
 \\[5pt]
x^2 \cos(\frac{\pi x}{2}) \sin(\pi x)
\end{pmatrix}
\cos(\sqrt{2}\pi t),
&&
\bm{w}(x,y;t)=-\bm{u}(x,y;t), \\
&  \varphi(x,y;t)=
 (x^2 \sin(\pi x)\sin(\pi y)) \sin(\sqrt{2}\pi t), &&
\end{align*}
in order to have a null pressure in the whole poroelastic domain. Since the solution together with its first $x-$, $y-$ and $t-$ derivatives are identically zero at the interface $\Gamma={0}\times (0,1)$,  interface coupling conditions are consequently null. This  suggests to test the \textit{sealed pores} ($\kint=0$), the \textit{imperfect pores} $(\kint \in(0,1)$) and the \textit{open pores} ($\kint=1$) cases with the same manufactured solution.
\begin{figure}
\begin{minipage}{\textwidth}
\begin{minipage}{0.45\textwidth}
\centering
\includegraphics[scale=0.45]{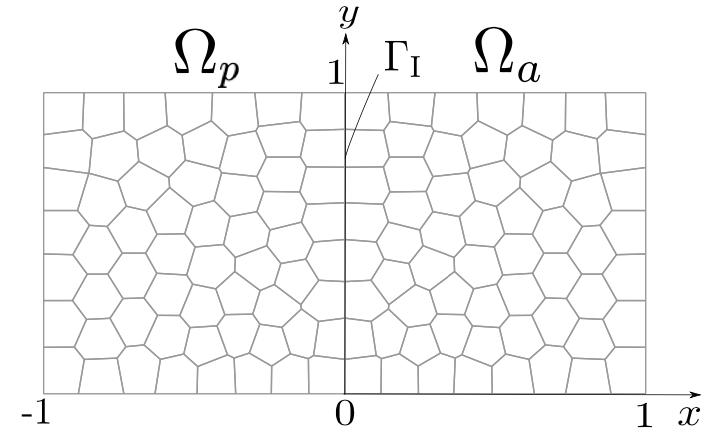}
\captionof{figure}{Test case 1.  Polygonal mesh, with $N = 100$ polygons.}
 \label{fig::mesh}
\end{minipage}
\hfill
\begin{minipage}{0.5\textwidth}
\centering
\begin{tabular}{|c|c|}
\hline
\textbf{Field}      & \textbf{Value} \\ \hline
$\rho_f$, $\rho_s$ & 1                           \\ \hline
$\lambda$, $\mu$   & 1                           \\ \hline
$a$   & 1                           \\ \hline
$\phi$   & 0.5                           \\ \hline
$\eta$             & 0                       \\ \hline
$\rho_w$           & 2                       \\ \hline
$\beta$, m         & 1                       \\ \hline
$c$, $\MB{\rho_a}$ & 1                       \\ \hline
\end{tabular}
\captionof{table}{Test case 1.  Physical parameters.}
\label{param}
\end{minipage}
\end{minipage}
\end{figure}
A sequence of uniformly refined polygonal meshes have been considered, with uniform polynomial degree $p_{p,\kappa}=p_{a,\kappa}=p = 1,2,3$. The final time $T$ has been set equal to $0.25$, considering a timestep of $\Delta t=10^{-4}$ for the Newmark-$\beta$ scheme,  $\gamma_N=1/2$ and $\beta_N=1/4$.
The penalty parameters $c_1, c_2$ and $c_3$ appearing in the definition \eqref{eq::stab_1}--\eqref{eq::stab_3} have been chosen equal to 10.
In Figure \ref{L2errorsvsh} (left) we report the computed errors as a function of the inverse of the mesh-size (log-log scale), for the case $p=3$. As predicted by Theorem \ref{thm::error-estimate} the errors decays proportionally to $h^{3}$.
\begin{figure}[htbp]
\centering
%
%
\definecolor{mycolor1}{rgb}{0.00000,0.44700,0.74100}%
\definecolor{mycolor2}{rgb}{0.85000,0.32500,0.09800}%
\definecolor{mycolor3}{rgb}{0.92900,0.69400,0.12500}%
\begin{tikzpicture}

\begin{axis}[%
width=0.33\textwidth,
height=0.33\textwidth,
scale only axis,
xmode=log,
xmin=0.05,
xmax=0.6,
xminorticks=true,
xlabel={$h$},
xmajorgrids,
xminorgrids,
ymode=log,
ymin=0.00005,
ymax=10,
yminorticks=true,
ymajorgrids,
yminorgrids,
title={Sealed pores ($\kint=0$)},
axis background/.style={fill=white},
legend style={at={(0.03,0.97)},anchor=north west,legend cell align=left,align=left,draw=white!15!black}
]
\addplot [color=mycolor1,solid,line width=2.0pt,mark=,mark options={solid}]
  table[row sep=crcr]{%
     3.706747546840842e-01     2.887944277271775e-02\\
     2.496245525522082e-01     9.672085562071669e-03\\
     1.825964801200949e-01     3.411462066956723e-03\\
     1.308540523483373e-01     1.550759125008310e-03\\
     1.012898589583347e-01     4.599299098164105e-04\\
};
\addlegendentry{$\|{\bm e}_h^u\|_{\rm dG,e}$};

\addplot [color=mycolor2,solid,line width=2.0pt,mark=o,mark options={solid}]
  table[row sep=crcr]{%
     3.706747546840842e-01     1.013626396945255e-02\\
     2.496245525522082e-01     2.820175535879977e-03\\
     1.825964801200949e-01     9.763779975987207e-04\\
     1.308540523483373e-01     4.386931797954194e-04\\
     1.012898589583347e-01     1.453838558086529e-04\\
};
\addlegendentry{$|{\bm e}_h^w |_{\rm dG,p}$};

\addplot [color=mycolor3,solid,line width=2.0pt,mark=+,mark options={solid}]
  table[row sep=crcr]{%
     3.706747546840842e-01     2.368199794649497e-02\\
     2.496245525522082e-01     9.385048409630980e-03\\
     1.825964801200949e-01     3.194418382690740e-03\\
     1.308540523483373e-01     1.070184966507638e-03\\
     1.012898589583347e-01     3.806153214534772e-04\\
};
\addlegendentry{$\|{e}_h^\varphi\|_{\rm dG,a}$};

\addplot [color=red,solid,line width=2.0pt,mark=diamond,mark options={solid}]
  table[row sep=crcr]{%
     3.706747546840842e-01     4.911320070131794e-02\\
     2.496245525522082e-01     1.619696269749003e-02\\
     1.825964801200949e-01     5.654209292461577e-03\\
     1.308540523483373e-01     2.381899400157443e-03\\
     1.012898589583347e-01     7.357987262702295e-04\\
};
\addlegendentry{$\| (\bm e_h^u,\bm e_h^w,e_h^\varphi)\|_{\rm E}$};

\addplot [color=black,dashed,line width=1.5pt]
  table[row sep=crcr]{%
0.35329917294355	0.00440989107359586\\
0.25283535049205	0.00161626804205538\\
0.187501752721944	0.00065919817341206\\
0.136755485644054	0.000255760968810901\\
};
\addlegendentry{$h^{3}$};

\end{axis}

\end{tikzpicture}%
%
%
\definecolor{mycolor1}{rgb}{0.00000,0.44700,0.74100}%
\definecolor{mycolor2}{rgb}{0.85000,0.32500,0.09800}%
\definecolor{mycolor3}{rgb}{0.92900,0.69400,0.12500}%
\begin{tikzpicture}

\begin{axis}[%
width=0.33\textwidth,
height=0.33\textwidth,
scale only axis,
xmin=0,
xmax=6,
xlabel={$p$},
xmajorgrids,
ymode=log,
ymin=1e-8,
ymax=1,
yminorticks=true,
ymajorgrids,
yminorgrids,
axis background/.style={fill=white},
title={Sealed pores ($\kint=0$)},
legend style={legend cell align=left,align=left,draw=white!15!black}
]
\addplot [color=mycolor1,solid,line width=2.0pt,mark=,mark options={solid}]
  table[row sep=crcr]{%
     1.000000000000000e+00     1.177991278200456e-02\\
     2.000000000000000e+00     6.585008750236634e-04\\
     3.000000000000000e+00     4.127934245185949e-05\\
     4.000000000000000e+00     2.643189290988839e-06\\
     5.000000000000000e+00     1.044565400333243e-07\\
};
\addlegendentry{$\|\bm{e}_h^u \|_{\Omega_p}$};

\addplot [color=mycolor2,solid,line width=2.0pt,mark=o,mark options={solid}]
  table[row sep=crcr]{%
     1.000000000000000e+00     8.712743534229246e-03\\
     2.000000000000000e+00     5.314005447541475e-04\\
     3.000000000000000e+00     2.972507420533812e-05\\
     4.000000000000000e+00     1.929916490371516e-06\\
     5.000000000000000e+00     7.319645410146761e-08\\
};
\addlegendentry{$\|\bm{e}_h^w \|_{\Omega_p}$};

\addplot [color=mycolor3,solid,line width=2.0pt,mark=+,mark options={solid}]
  table[row sep=crcr]{%
     1.000000000000000e+00     1.519635533476007e-02\\
     2.000000000000000e+00     1.172626301186908e-03\\
     3.000000000000000e+00     7.010299726268865e-05\\
     4.000000000000000e+00     3.312892162770180e-06\\
     5.000000000000000e+00     1.105832075393239e-07\\
};
\addlegendentry{$\|e_h^\varphi  \|_{\Omega_a}$};

\addplot [color=black,dashed,line width=1.5pt]
  table[row sep=crcr]{%
  1   5.502322005640724e-02\\
  2   3.027554745375815e-03\\
  3   1.665858109876335e-04\\
  4   9.166087736247617e-06\\
  5   5.043476625678880e-07\\
};
\addlegendentry{$e^{-3\textcolor{red}{p}}$};   

\end{axis}

\end{tikzpicture}%
%
%
\definecolor{mycolor1}{rgb}{0.00000,0.44700,0.74100}%
\definecolor{mycolor2}{rgb}{0.85000,0.32500,0.09800}%
\definecolor{mycolor3}{rgb}{0.92900,0.69400,0.12500}%
\begin{tikzpicture}

\begin{axis}[%
width=0.33\textwidth,
height=0.33\textwidth,
scale only axis,
xmode=log,
xmin=0.05,
xmax=0.6,
xminorticks=true,
xlabel={$h$},
xmajorgrids,
xminorgrids,
ymode=log,
ymin=0.00005,
ymax=10,
yminorticks=true,
ymajorgrids,
yminorgrids,
title={Imperfect pores ($\kint=\frac{1}{2}$)},
axis background/.style={fill=white},
legend style={at={(0.03,0.97)},anchor=north west,legend cell align=left,align=left,draw=white!15!black}
]
\addplot [color=mycolor1,solid,line width=2.0pt,mark=,mark options={solid}]
  table[row sep=crcr]{%
     3.706747546840842e-01     2.862199823762073e-02\\
     2.496245525522082e-01     9.324317669188271e-03\\
     1.902011899904271e-01     3.728047015204185e-03\\
     1.362344921556706e-01     1.377796066427069e-03\\
     9.582420607016751e-02     4.468751507371053e-04\\
     };
\addlegendentry{$\|{\bm e}_h^u\|_{\rm dG,e}$};

\addplot [color=mycolor2,solid,line width=2.0pt,mark=o,mark options={solid}]
  table[row sep=crcr]{%
     3.706747546840842e-01     9.023705782478268e-03\\
     2.496245525522082e-01     2.589406957430076e-03\\
     1.902011899904271e-01     1.033030983576455e-03\\
     1.362344921556706e-01     4.114393850741680e-04\\
     9.582420607016751e-02     1.391784201366490e-04\\
};
\addlegendentry{$|{\bm e}_h^w |_{\rm dG,p}$};

\addplot [color=mycolor3,solid,line width=2.0pt,mark=+,mark options={solid}]
  table[row sep=crcr]{%
     3.706747546840842e-01     2.368199794646264e-02\\
     2.496245525522082e-01     9.385048409628954e-03\\
     1.902011899904271e-01     3.252547390191706e-03\\
     1.362344921556706e-01     1.129214642458998e-03\\
     9.582420607016751e-02     3.704348089739504e-04\\
};
\addlegendentry{$\|{e}_h^\varphi\|_{\rm dG,a}$};

\addplot [color=red,solid,line width=2.0pt,mark=diamond,mark options={solid}]
  table[row sep=crcr]{%
     3.706747546840842e-01     4.799049989790460e-02\\
     2.496245525522082e-01     1.583766682551031e-02\\
     1.902011899904271e-01     5.926005769797757e-03\\
     1.362344921556706e-01     2.180147933235778e-03\\
     9.582420607016751e-02     7.178847165737432e-04\\
};
\addlegendentry{$\| (\bm e_h^u,\bm e_h^w,e_h^\varphi)\|_{\rm E}$};

\addplot [color=black,dashed,line width=1.5pt]
  table[row sep=crcr]{%
0.35329917294355	0.00440989107359586\\
0.25283535049205	0.00161626804205538\\
0.187501752721944	0.00065919817341206\\
0.136755485644054	0.000255760968810901\\
};
\addlegendentry{$h^{3}$};

\end{axis}

\end{tikzpicture}%
%
%
\definecolor{mycolor1}{rgb}{0.00000,0.44700,0.74100}%
\definecolor{mycolor2}{rgb}{0.85000,0.32500,0.09800}%
\definecolor{mycolor3}{rgb}{0.92900,0.69400,0.12500}%
\begin{tikzpicture}

\begin{axis}[%
width=0.33\textwidth,
height=0.33\textwidth,
scale only axis,
xmin=0,
xmax=6,
xlabel={$p$},
xmajorgrids,
ymode=log,
ymin=1e-8,
ymax=1,
yminorticks=true,
ymajorgrids,
yminorgrids,
axis background/.style={fill=white},
title={Imperfect pores ($\kint=\frac{1}{2}$)},
legend style={legend cell align=left,align=left,draw=white!15!black}
]
\addplot [color=mycolor1,solid,line width=2.0pt,mark=,mark options={solid}]
  table[row sep=crcr]{%
     1.000000000000000e+00     1.168413773164724e-02\\
     2.000000000000000e+00     9.110036141239667e-04\\
     3.000000000000000e+00     5.409251244231194e-05\\
     4.000000000000000e+00     2.577289301717289e-06\\
     5.000000000000000e+00     1.372053461737597e-07\\
};
\addlegendentry{$\|\bm{e}_h^u \|_{\Omega_p}$};

\addplot [color=mycolor2,solid,line width=2.0pt,mark=o,mark options={solid}]
  table[row sep=crcr]{%
     1.000000000000000e+00     8.817449338351227e-03\\
     2.000000000000000e+00     6.330435375557821e-04\\
     3.000000000000000e+00     3.993770183520082e-05\\
     4.000000000000000e+00     1.899681832045270e-06\\
     5.000000000000000e+00     1.010544545661184e-07\\
};
\addlegendentry{$\|\bm{e}_h^w \|_{\Omega_p}$};

\addplot [color=mycolor3,solid,line width=2.0pt,mark=+,mark options={solid}]
  table[row sep=crcr]{%
     1.000000000000000e+00     1.558506121581035e-02\\
     2.000000000000000e+00     1.313285276986313e-03\\
     3.000000000000000e+00     6.764476445360933e-05\\
     4.000000000000000e+00     2.829817020328395e-06\\
     5.000000000000000e+00     1.045590434349055e-07\\
};
\addlegendentry{$\|e_h^\varphi  \|_{\Omega_a}$};

\addplot [color=black,dashed,line width=1.5pt]
  table[row sep=crcr]{%
  1   5.502322005640724e-02\\
  2   3.027554745375815e-03\\
  3   1.665858109876335e-04\\
  4   9.166087736247617e-06\\
  5   5.043476625678880e-07\\
};
\addlegendentry{$e^{-3\textcolor{red}{p}}$};   

\end{axis}

\end{tikzpicture}%
%
%
\definecolor{mycolor1}{rgb}{0.00000,0.44700,0.74100}%
\definecolor{mycolor2}{rgb}{0.85000,0.32500,0.09800}%
\definecolor{mycolor3}{rgb}{0.92900,0.69400,0.12500}%
\begin{tikzpicture}

\begin{axis}[%
width=0.33\textwidth,
height=0.33\textwidth,
scale only axis,
xmode=log,
xmin=0.05,
xmax=0.6,
xminorticks=true,
xlabel={$h$},
xmajorgrids,
xminorgrids,
ymode=log,
ymin=0.00005,
ymax=10,
yminorticks=true,
ymajorgrids,
yminorgrids,
title={Open pores ($\kint=1$)},
axis background/.style={fill=white},
legend style={at={(0.03,0.97)},anchor=north west,legend cell align=left,align=left,draw=white!15!black}
]
\addplot [color=mycolor1,solid,line width=2.0pt,mark=,mark options={solid}]
  table[row sep=crcr]{%
     3.581723141739799e-01     3.229608608138437e-02\\
     2.564569543970479e-01     1.167903320077093e-02\\
     1.870455127132988e-01     5.142054723261010e-03\\
     1.363219521552116e-01     1.383648437495561e-03\\
     9.671538020774045e-02     5.268878053014214e-04\\
     };
\addlegendentry{$\|{\bm e}_h^u\|_{\rm dG,e}$};

\addplot [color=mycolor2,solid,line width=2.0pt,mark=o,mark options={solid}]
  table[row sep=crcr]{%
     3.581723141739799e-01     9.313651958443980e-03\\
     2.564569543970479e-01     3.018565879117654e-03\\
     1.870455127132988e-01     1.402815181954107e-03\\
     1.363219521552116e-01     4.545727277168591e-04\\
     9.671538020774045e-02     1.637368956095560e-04\\
};
\addlegendentry{$|{\bm e}_h^w |_{\rm dG,p}$};

\addplot [color=mycolor3,solid,line width=2.0pt,mark=+,mark options={solid}]
  table[row sep=crcr]{%
     3.581723141739799e-01     2.791810699918568e-02\\
     2.564569543970479e-01     8.400050390138643e-03\\
     1.870455127132988e-01     2.868330026486586e-03\\
     1.363219521552116e-01     1.107045433246193e-03\\
     9.671538020774045e-02     3.918972446118939e-04\\
};
\addlegendentry{$\|{e}_h^\varphi\|_{\rm{dG,a}}$};

\addplot [color=red,solid,line width=2.0pt,mark=diamond,mark options={solid}]
  table[row sep=crcr]{%
     3.581723141739799e-01     5.296398731810582e-02\\
     2.564569543970479e-01     1.779300073946707e-02\\
     1.870455127132988e-01     7.399345160889286e-03\\
     1.363219521552116e-01     2.260784754113515e-03\\
     9.671538020774045e-02     8.123111760372912e-04\\
     };
\addlegendentry{$\| (\bm e_h^u,\bm e_h^w,e_h^\varphi)\|_{\rm E}$};

\addplot [color=black,dashed,line width=1.5pt]
  table[row sep=crcr]{%
0.35329917294355	0.00440989107359586\\
0.25283535049205	0.00161626804205538\\
0.187501752721944	0.00065919817341206\\
0.136755485644054	0.000255760968810901\\
};
\addlegendentry{$h^{3}$};

\end{axis}

\end{tikzpicture}%
%
%
\definecolor{mycolor1}{rgb}{0.00000,0.44700,0.74100}%
\definecolor{mycolor2}{rgb}{0.85000,0.32500,0.09800}%
\definecolor{mycolor3}{rgb}{0.92900,0.69400,0.12500}%
\begin{tikzpicture}

\begin{axis}[%
width=0.33\textwidth,
height=0.33\textwidth,
scale only axis,
xmin=0,
xmax=6,
xlabel={$p$},
xmajorgrids,
ymode=log,
ymin=1e-8,
ymax=1,
yminorticks=true,
ymajorgrids,
yminorgrids,
axis background/.style={fill=white},
title={Open pores ($\kint=1$)},
legend style={legend cell align=left,align=left,draw=white!15!black}
]
\addplot [color=mycolor1,solid,line width=2.0pt,mark=,mark options={solid}]
  table[row sep=crcr]{%
     1.000000000000000e+00     1.168413773164724e-02\\
     2.000000000000000e+00     9.110036141239667e-04\\
     3.000000000000000e+00     5.409251244231194e-05\\
     4.000000000000000e+00     3.626905401376469e-06\\
     5.000000000000000e+00     1.022982160706756e-07\\
};
\addlegendentry{$\|\bm{e}_h^u \|_{\Omega_p}$};

\addplot [color=mycolor2,solid,line width=2.0pt,mark=o,mark options={solid}]
  table[row sep=crcr]{%
     1.000000000000000e+00     8.817449338351227e-03\\
     2.000000000000000e+00     6.330435375557821e-04\\
     3.000000000000000e+00     3.993770183520082e-05\\
     4.000000000000000e+00     2.528389032976360e-06\\
     5.000000000000000e+00     7.898219493166855e-08\\
};
\addlegendentry{$\|\bm{e}_h^w \|_{\Omega_p}$};

\addplot [color=mycolor3,solid,line width=2.0pt,mark=+,mark options={solid}]
  table[row sep=crcr]{%
     1.000000000000000e+00     1.558506121581035e-02\\
     2.000000000000000e+00     1.313285276986313e-03\\
     3.000000000000000e+00     6.764476445360933e-05\\
     4.000000000000000e+00     3.449258055338023e-06\\
     5.000000000000000e+00     1.263029241203747e-07\\
};
\addlegendentry{$\|e_h^\varphi  \|_{\Omega_a}$};

\addplot [color=black,dashed,line width=1.5pt]
  table[row sep=crcr]{%
  1   5.502322005640724e-02\\
  2   3.027554745375815e-03\\
  3   1.665858109876335e-04\\
  4   9.166087736247617e-06\\
  5   5.043476625678880e-07\\
};
\addlegendentry{$e^{-3\textcolor{red}{p}}$};   

\end{axis}

\end{tikzpicture}%
\caption{Test case 1. Left: computed errors in the energy norm, at the final time $T$,  as a function of $h$ ($p=3$). Right: Computed errors \MB{in the $L^2$-norm}, at final time T, as a function of the polynomial degree $p$ on a  computational mesh of $N=100$ polygons.} 
\label{L2errorsvsh}
\end{figure}
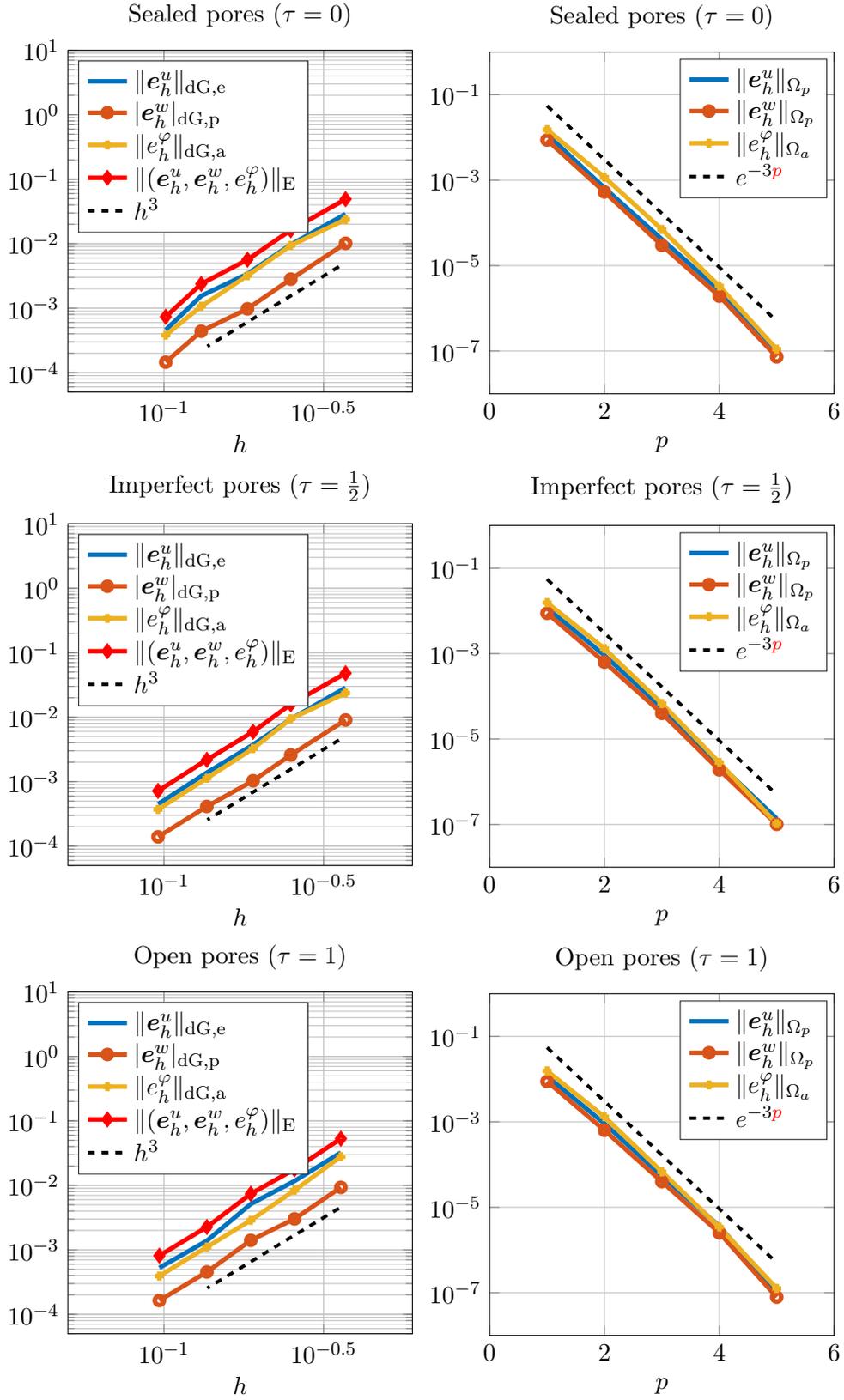
\noindent
Moreover, we have also computed the $L^2$-errors on the pressure field $p$. These results are reported Figure~\ref{fig::L2pressure} and show a convergence rate proportional to $h^3$, as expected. We point out the that discrete pressure has been computed through equation \eqref{eq::const_sigma_press}.
\begin{figure}[htbp]
\centering
%
%
\definecolor{mycolor1}{rgb}{0.00000,0.44700,0.74100}%
\definecolor{mycolor2}{rgb}{0.85000,0.32500,0.09800}%
\definecolor{mycolor3}{rgb}{0.92900,0.69400,0.12500}%
\begin{tikzpicture}
\begin{axis}[%
width=0.4\textwidth,
height=0.4\textwidth,
scale only axis,
xmode=log,
xmin=0.05,
xmax=0.6,
xminorticks=true,
xlabel={$h$},
xmajorgrids,
xminorgrids,
ymode=log,
ymin=0.00001,
ymax=0.01,
yminorticks=true,
ymajorgrids,
yminorgrids,
title={$\| p -p_h \|_{\Omega}$},
axis background/.style={fill=white},
legend style={at={(0.03,0.97)},anchor=north west,legend cell align=left,align=left,draw=white!15!black}
]

\addplot [color=mycolor1,solid,line width=2.0pt,mark=,mark options={solid}]
  table[row sep=crcr]{%
     3.706747546840842e-01     2.978634524372338e-03\\
     2.496245525522082e-01     7.832732334696137e-04\\
     1.825964801200949e-01     2.785986547973479e-04\\
     1.308540523483373e-01     1.165413564678970e-04\\
     1.012898589583347e-01     4.085028729717422e-05\\
};
\addlegendentry{$\kint=0$};

\addplot [color=mycolor2,solid,line width=2.0pt,mark=o,mark options={solid}]
  table[row sep=crcr]{%
     3.706747546840842e-01     2.860982475272565e-03\\
     2.496245525522082e-01     7.479888791365233e-04\\
     1.902011899904271e-01     3.126808453864335e-04\\
     1.362344921556706e-01     1.172607895255750e-04\\
     9.582420607016751e-02     3.865722000141522e-05\\
};
\addlegendentry{$\kint=\frac{1}{2}$};

\addplot [color=mycolor3,solid,line width=2.0pt,mark=+,mark options={solid}]
  table[row sep=crcr]{%
     3.581723141739799e-01     3.766520317319817e-03\\
     2.564569543970479e-01     9.596233680281444e-04\\
     1.870455127132988e-01     3.440569263845758e-04\\
     1.363219521552116e-01     1.334768713465238e-04\\
     9.671538020774045e-02     4.762310388546280e-05\\
};
\addlegendentry{$\kint=1$};

\addplot [color=black,dashed, line width=1.5pt]
  table[row sep=crcr]{%
0.365752072770996	0.00489283294568612\\
0.272105667490156	0.00201471102230995\\
0.180496321976427	0.000588037563954273\\
0.133499390655323	0.000237923779546922\\
};
\addlegendentry{$h^{3}$};

\end{axis}

\end{tikzpicture}%
\caption{Test case 1.  Computed errors $||p-p_h||_{\Omega}$, at the final time $T$,  as a function of $h$ ($p=3$).} 
\label{fig::L2pressure}
\end{figure}
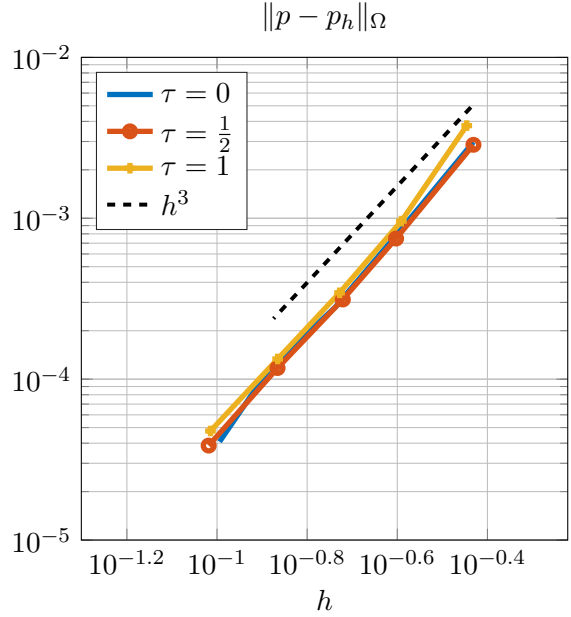
Finally, we compute the $L^2$ norm of the error fixing a computational mesh of $N=100$ polygons and varying the polynomial degree $p=1,2,\ldots, 5$. The computed errors are reported in Figure \ref{L2errorsvsh} (right) (semi-log scale), and an exponential decay of the error is clearly attained.
%
%
\subsection*{Test case 2.   Oblique interface}
The second test cases consider a domain $\Omega=(0,400)\times(0,400)\ {\rm m^2}$, with a  straight interface with slope 60$^\circ$, cf. Figure \ref{fig::test_2_space}. Physical and dimensional parameters have been chosen as in \cite{chiavassa_lombard_2013} and listed in  Table \ref{tab::test1}.
\begin{table}[htbp]
\centering
\begin{tabular}{llllll}
\cline{1-5}
\textbf{Fluid}  & Fluid density      & $\rho_f, \MB{\rho_a}$    & 1000                 & $\rm kg/m^3$    &  \\
& Wave velocity      & $c$         & 1500                 & $\rm m/s$       &  \\
& Dynamic viscosity  & $\eta$      & 0                    & $\rm Pa\cdot s$ &  \\ \cline{1-5}
\textbf{Grain}  & Solid density      & $\rho_s$    & 2690                 & $\rm kg/m^3$    &  \\
& Shear modulus      & $\mu$       & 1.86$\cdot 10^9$     & $\rm Pa$        &  \\ \cline{1-5}
\textbf{Matrix} & Porosity           & $\phi$      & 0.38                 &             &  \\
& Tortuosity         & $a$         & 1.8                  &             &  \\
& Permeability       & $k$         & $2.79\cdot 10^{-11}$ & $\rm m^2$       &  \\
& Lam\'e coefficient   & $\MB{\lambda}$ & $1.2\cdot 10^{8}$    & $\rm Pa$        &  \\
& Biot's coefficient & $m$         & $5.34\cdot 10^{9}$   & $\rm Pa$        &  \\
& Biot's coefficient & $\beta$     & 0.95                 &             &  \\  \cline{1-5}
\textbf{\MB{Interface}} & \MB{Interface permeability}           & $\tau$      & \MB{\{0; $10^{-8}$; 1\}}
&             &  \\
\cline{1-5}
\end{tabular}
\caption{Test case 2.   Physical parameters.}
\label{tab::test1}
\end{table}

\noindent
Boundary and initial conditions have been set equal to zero both for the poroelastic and the acoustic domain. Forcing terms are null in $\Omega_p$, while in $\Omega_a$ a forcing term is imposed until $t=0.05\ \rm s$, by considering the following load: \RRR{$f_a=r(x,y)h(t),$}
where
\begin{equation}
h(t)=
\begin{cases}
\sum_{k=1}^{4}{\alpha_k\sin(\gamma_k\omega_0 t)}, &\text{if } 0<t<\frac{1}{f_0}\\[5pt]
0,&\text{otherwise,}
\end{cases}
\label{eq::forcing_time}
\end{equation}
with coefficients defined as: $\alpha_1=1,$ $\alpha_2=-21/32,$ $\alpha_3=63/768,$ $\alpha_4=-1/512$, $\gamma_k=2^{k-1}$, $\omega_0=2\pi f_0\ \rm Hz$, $f_0=20\ \rm Hz$.  The function \RRR{ $r(x,y)$ is  defined as $ r(x,y)= 1$,  if $(x,y) \in \bigcup_{i=1}^4 B({\bf x}_i,R)$, while $r(x,y)=0$, otherwise, where $B({\bf x}_i,R)$ is the circle centered in ${\bf x}_i$ and with radius $R$}. Here, we set ${\bf x}_1 = (250,100)$ m, ${\bf x}_2 = (250,150)$ m, ${\bf x}_3 = (250,200)$ m, ${\bf x}_4 = (250,250)$ m and \RRR{$R=10$} m. 
Notice that, the support of the function $r(x,y)$ has been reported in Figure \ref{fig::test_2_space}, superimposed with a sample of one of the computational meshes employed.
\begin{figure}
 \subfloat[Test case 2. \label{fig::test_2_space}]{\includegraphics[scale=0.5]{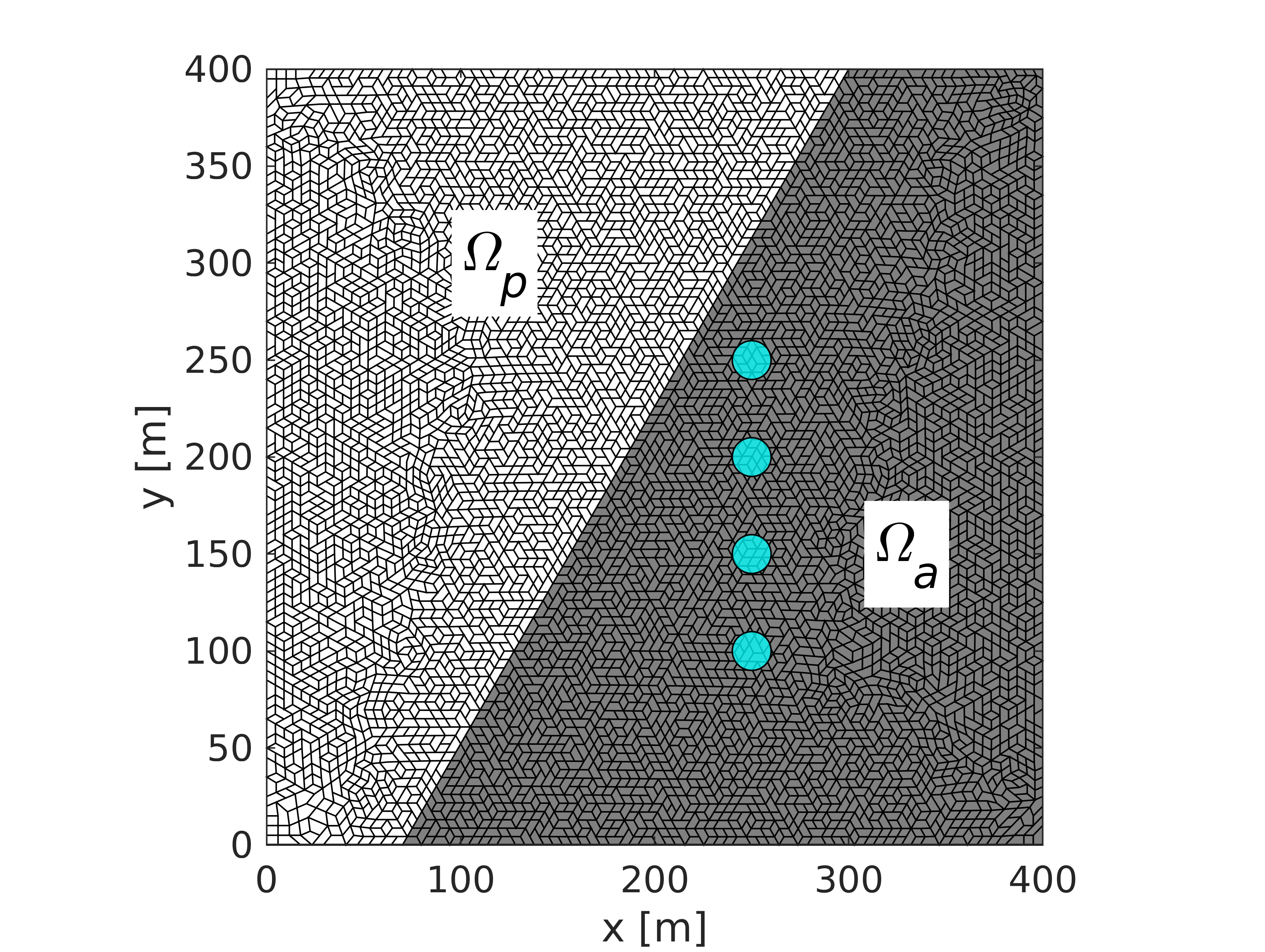}}
 \subfloat[Test case 3. \label{fig::test_4_space}]{\includegraphics[scale=0.5]{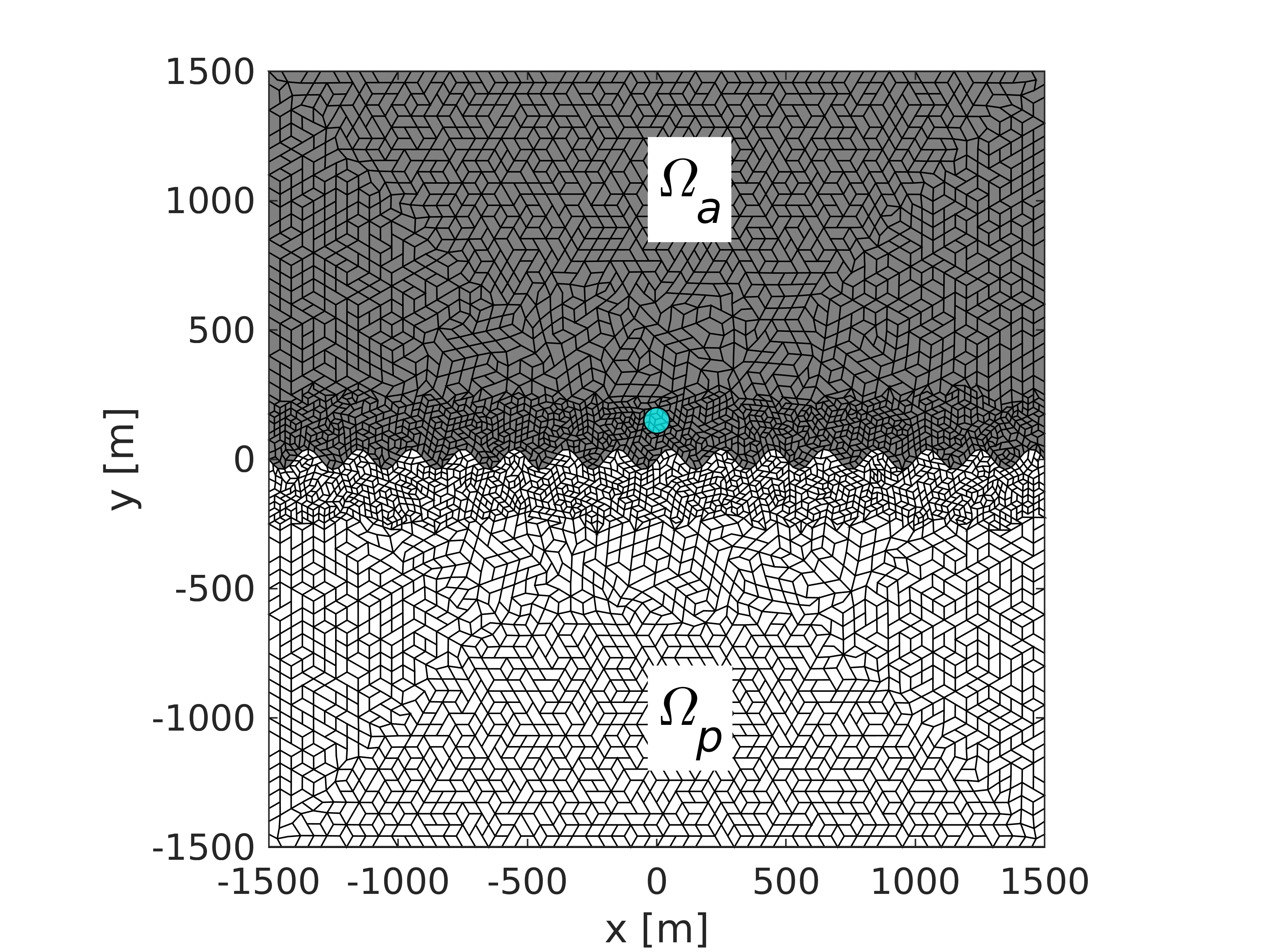}} 
 \caption{Test cases 2 and 3. Computational domains and computational grids. The support of $\bm{r}(x,y)$ is also superimposed in cyan over the mesh.}
\end{figure}
Simulations have been  carried out  by considering: a polygonal mesh consisting in $N=6586$ polygons, subdivided into $N_a=3564$ and $N_p=3022$ polygons for the acoustic and poroelastic domain, respectively; a Newmark scheme with time step $\Delta t=10^{-3}\ \rm s$ and $\gamma_N=1/2$ and $\beta_N=1/4$ in a time interval $[0,0.15]\ \rm s$; a polynomial degree $p_{p,\kappa}=p_{a,\kappa}=p = 4$. 
\RRR{In Figure \ref{fig::test2}, we show the computed pressure $p_h$ 
considering the interface permeability $\tau=0,10^{-8}$ and $\tau=1$, respectively.  The latter values aim at modeling \textit{sealed}, \textit{imperfect} and \textit{open} pores condition at the interface.} Remark that $p_h = \MB{\rho_a \dot{\varphi}_h}$ in the acoustic domain while $p_h =  \MB{-m(\beta \nabla\cdot\bm{u}_h+\nabla\cdot \bm{w}_h)}$ in the poroelastic one. As one can see, \RRR{the pressure wave correctly propagates from the acoustic domain to the poroelastic one: the continuity at the interface boundary can be appreciated for the case $\tau=1$ (open pores).} 
\begin{figure}
\centering
\includegraphics[scale=0.19]{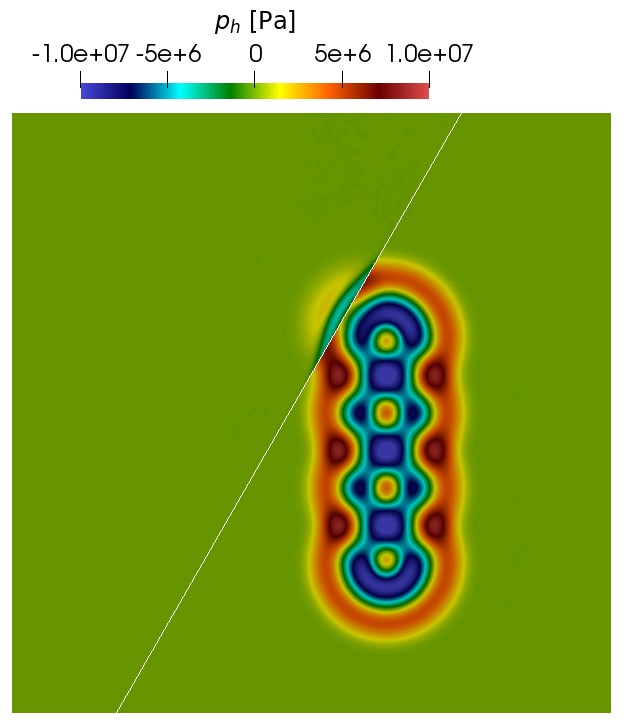}
\includegraphics[scale=0.19]{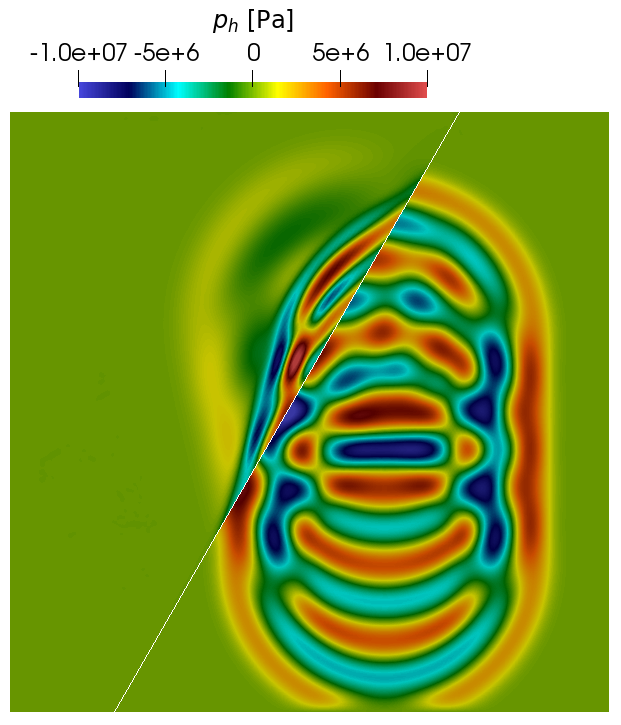}
\includegraphics[scale=0.19]{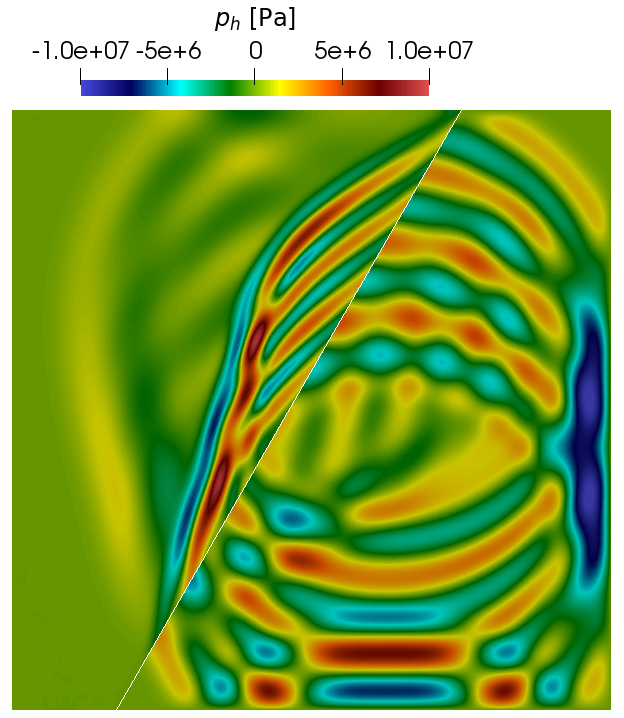}
\includegraphics[scale=0.19]{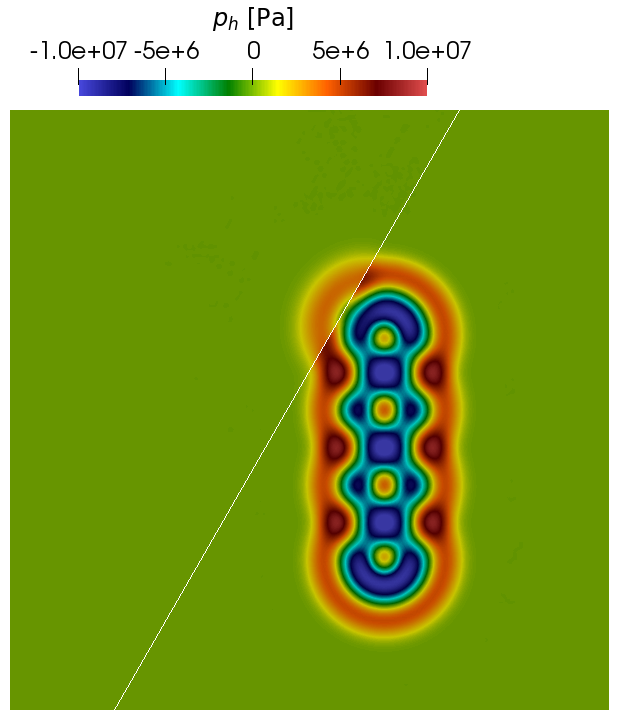}
\includegraphics[scale=0.19]{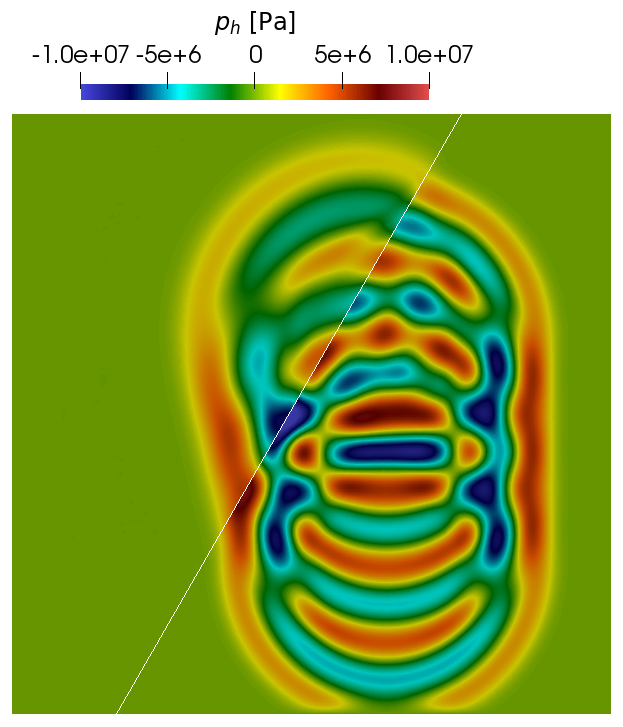}
\includegraphics[scale=0.19]{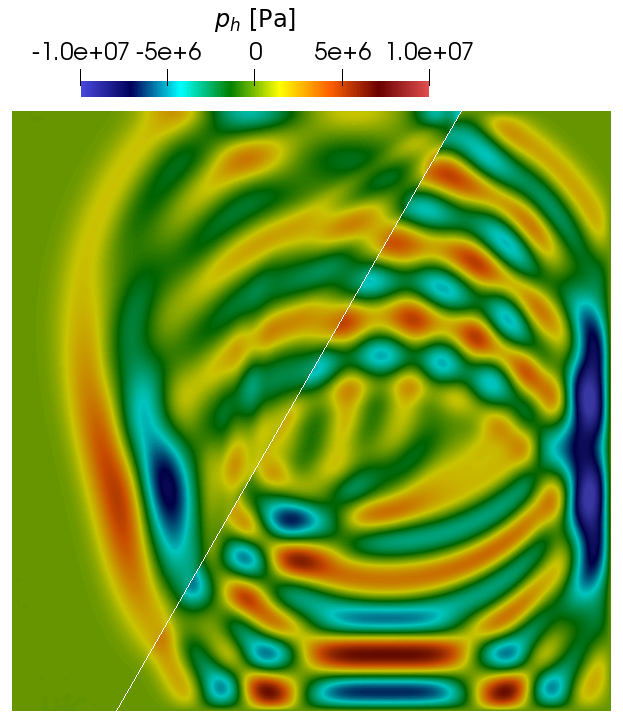}
\includegraphics[scale=0.19]{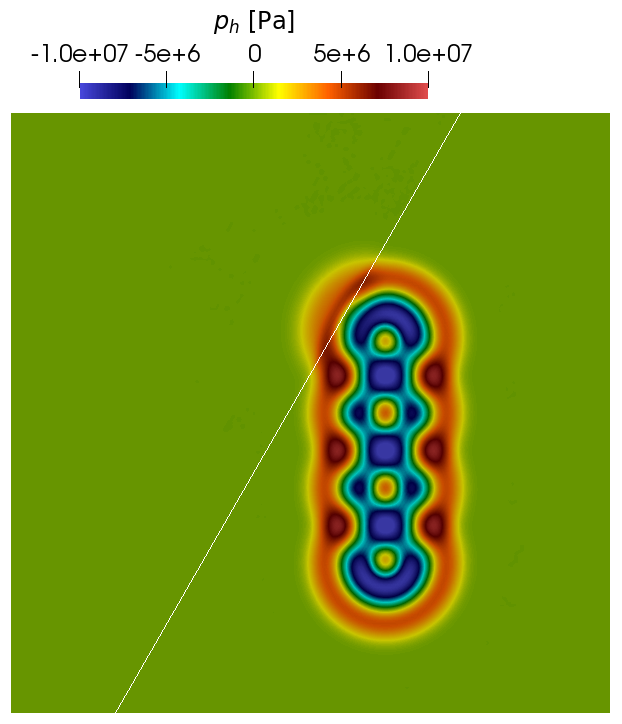}
\includegraphics[scale=0.19]{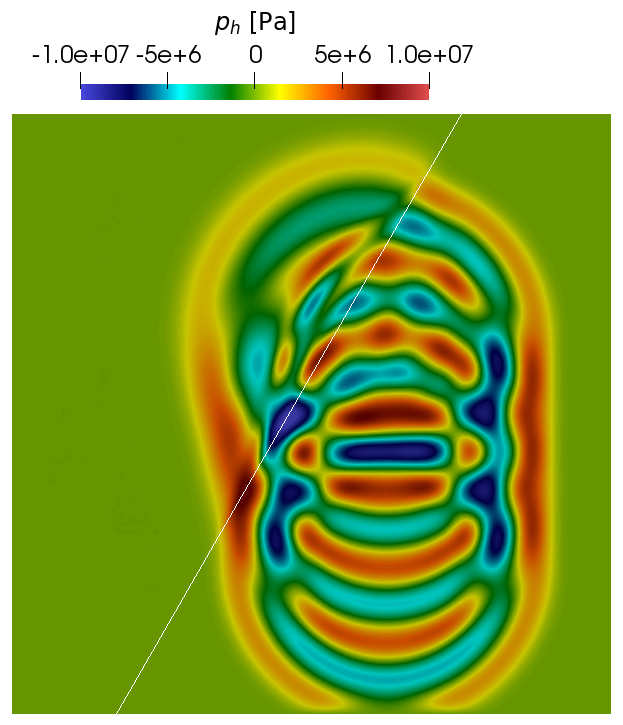}
\includegraphics[scale=0.19]{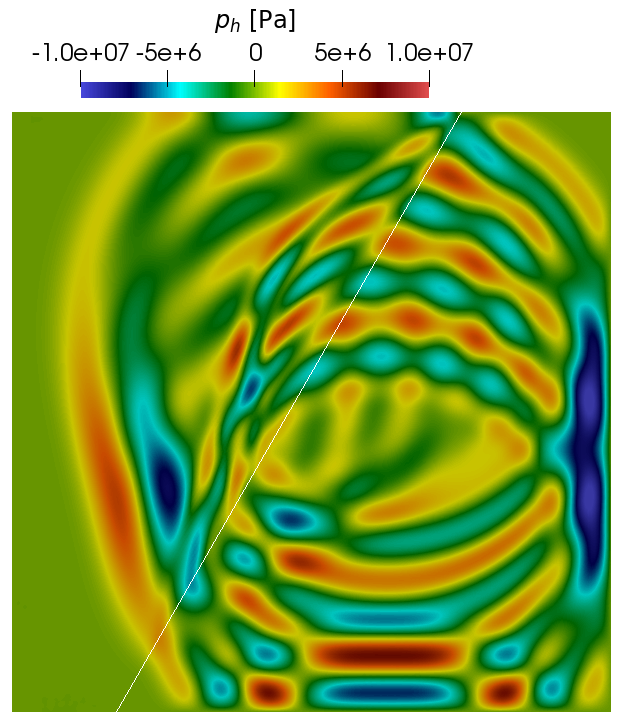}
\caption[Test 2: Pressure solution in the poroelastic-acoustic domain at three time instants]{Test case 2.   Oblique interface. Computed pressure $p_h$ in the poroelastic-acoustic domain at three time instants (from left to right $t=0.04,\ 0.08,\ 0.12 \rm s$), with $\Delta t = 10^{-3}\ \rm s$. \RRR{First line: $\tau=0$ (sealed pores). Second line:  $\tau=10^{-8}$ (imperfect pores). Third line:  $\tau=1$ (open pores).}}
\label{fig::test2}
\end{figure}
\subsection*{Test case 3: Sinusoidal interface}
Finally, with the same data of test case 2, we consider a square domain $\Omega=[-1500,1500]^2\rm m^2$ and a sinusoidal interface $\Gamma$ defined through the relation $\Gamma(x)=40\sin\left(\frac{\pi}{100}x\right)$, cf. Figure \ref{fig::test_4_space}. For this numerical experiment we consider the dynamic viscosity $\eta=0$ and $\eta=0.0015$.
 The number of polygons composing the mesh is $N=5441$, subdivided into $N_a=2713$ and  $N_p=2728$ polygons for the acoustic and poro-elastic subdomains, respectively. Moreover, as shown in Figure \ref{fig::test_4_space}, we have set the initial conditions on the acoustic domain, by defining $h(t)$ as before and \RRR{$r(x,y) = 1/\rho_a$}, if $(x,y) \in B({\bf x}_1,R)$, and equal to $0$, otherwise, with ${\bf x}_1 = (0,150)$ m and $R=50$ m. \RRR{Here we consider the interface permeability $\tau=1$}.
\begin{figure}[htbp]
\centering
\includegraphics[width=0.3\textwidth]{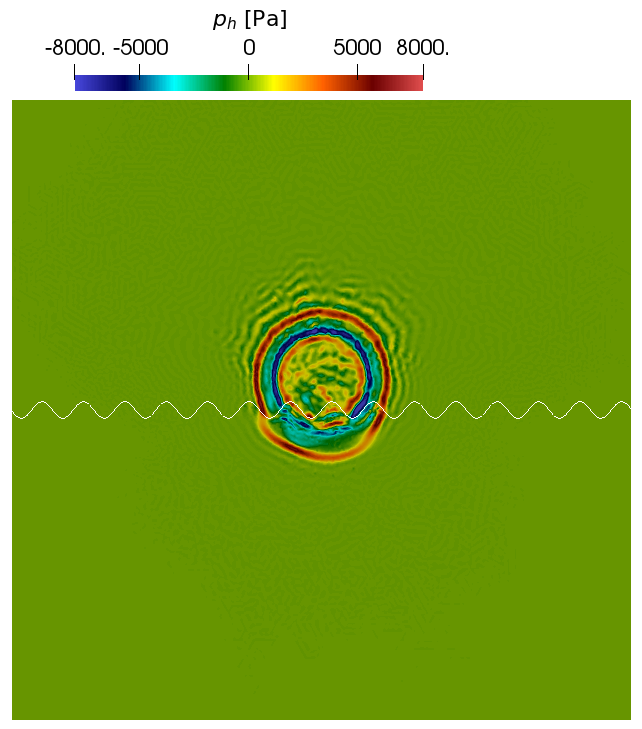}
\includegraphics[width=0.3\textwidth]{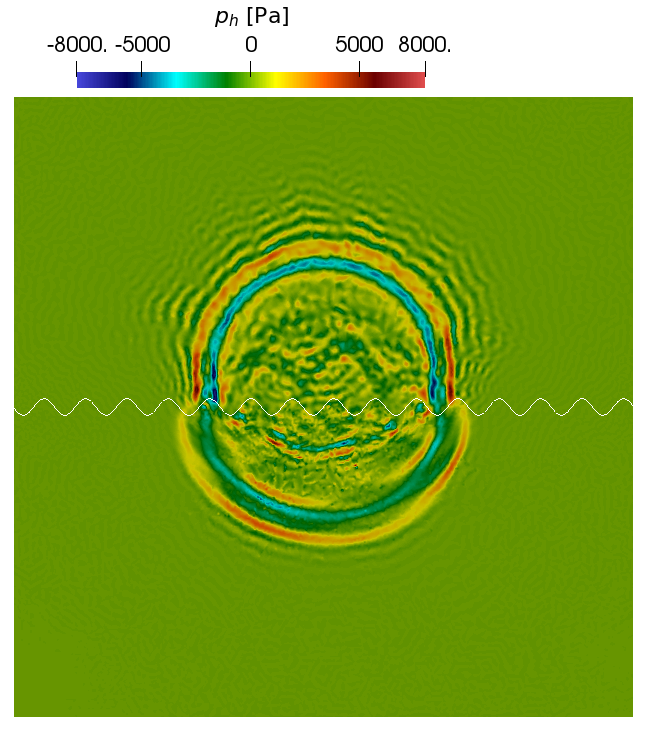}
\includegraphics[width=0.3\textwidth]{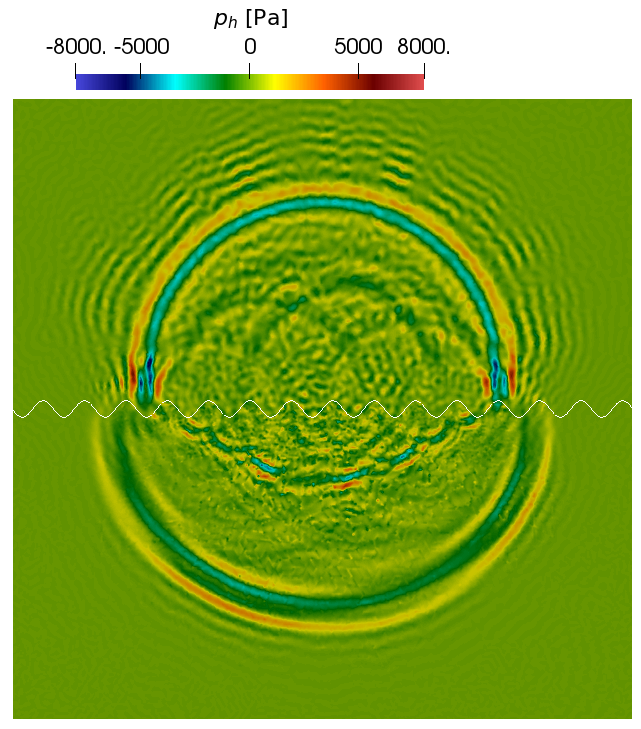}
\includegraphics[width=0.3\textwidth]{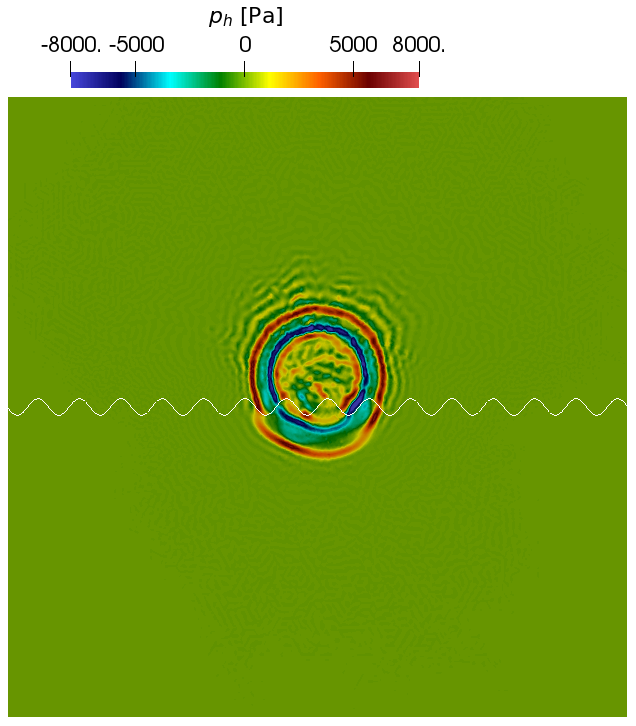}
\includegraphics[width=0.3\textwidth]{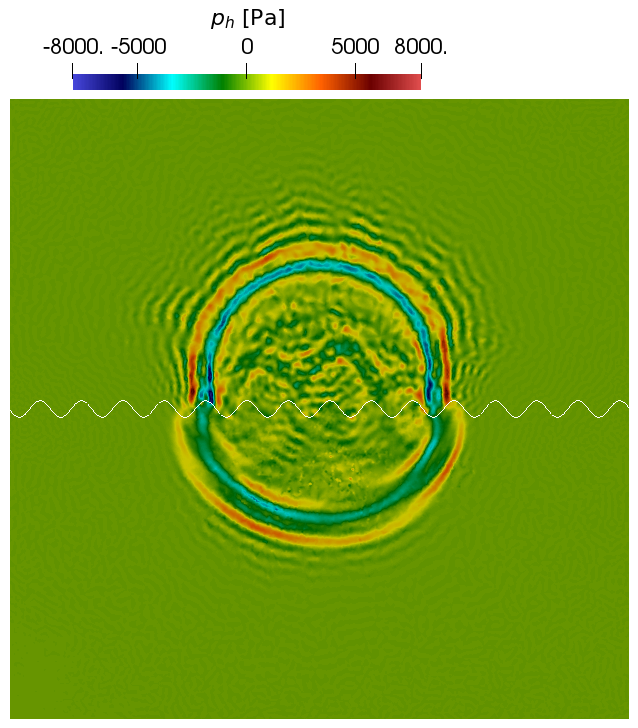}
\includegraphics[width=0.3\textwidth]{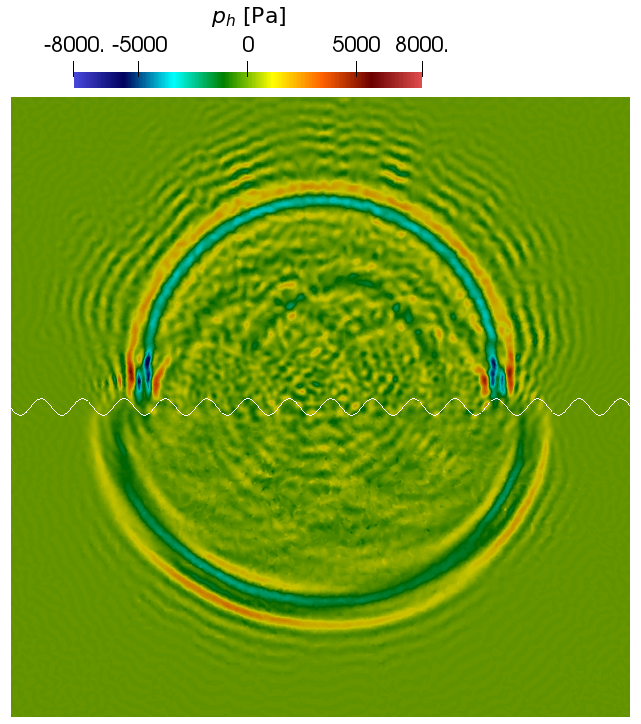}
\caption[Test case 3.   Pressure solutions at four time instants]{Test case 3.   Computed pressure $p_h$ at the time instants $t=0.2$~s (left), $t=0.4$~s (center) and $t=0.6$~s (right) with $\Delta t = 10^{-3}\ \rm s$:  $\eta=0$ (top line) $\eta=0.0015$ (bottom line).}
\label{fig::test4}
\end{figure}
\noindent
In Figure \ref{fig::test4} we show the propagation of the discrete pressure at the time instants $t=0.2, 0.4$~s and $t=0.6$~s. Observe how the sinusoidal interface contributes to the diffraction of the acoustic wave in the poroelastic domain. This effect is more relevant when the viscosity is null while for $\eta=0.0015$ the diffracted waves are attenuated in the poroelastic domain.  In particular, we can observe the main wave front traveling towards the rigid walls of the domain followed by waves having smaller amplitude originated by the sinusoidal shape of the contact boundary. 
\section{Conclusions}\label{sec::conclusions}
In this work we have presented and analyzed a PolyDG approximation to the coupled poro-elasto-acoustic problem on polygonal and polyhedral grids. \MB{Well-posedness of the continuous problem has been established by employing the semigroup theory.} We a have proved a stability result for both the continuous and the semi-discrete formulations together with a priori $hp$-version error estimates for the semi-discrete solution in a suitable energy norm. Finally, a wide set of two-dimensional numerical simulations have been carried out.

\section*{Acknowledgments}
PFA, MB and IM are members of the INdAM Research group GNCS and this work is partially funded by INdAM-GNCS. 
PFA has been partially funded by the research project PRIN17, n.201744KLJL funded by MIUR.
The work of MB has been funded by the European Commission through the H2020-MSCA-IF-EF project PDGeoFF (Grant no. 896616). PA acknowledges the H2020-MSCA-IF-EF European Commission research grant no. 896616   (project PDGeoFF).

\appendix
\section{Theoretical results}\label{appendix}
\MB{The existence and uniqueness of the solution to problem \eqref{system} as well as some technical results instrumental for the stability and error analysis are presented below.}

\MB{We establish the existence and uniqueness result in the framework of the Hille--Yosida theory by combining and adapting the arguments of \cite[Theorem 3.1]{bonaldi} and \cite[Section 5.2]{ezziani} where the elasto-acoustic coupling and the poroelastic problem were analyzed, respectively.
To do so, we additionally define the spaces 
$\bm H_\mathbb{C}^\Delta(\Omega_p)=\{\bm v\in\bm L^2(\Omega_p):\nabla\cdot(\mathbb{C}:\bm\epsilon(\bm v))\in\bm L^2(\Omega_p)\}$,
$\bm H^{\nabla}(\Omega_p)=\{\bm v\in\bm L^2(\Omega_p):\nabla (\nabla \cdot \bm  v) \in\bm L^2(\Omega_p)\}$, and $H^\Delta(\Omega_a)=\{v\in L^2(\Omega_a):\Delta v\in L^2(\Omega_a)\}$.}
\begin{theorem}[Existence and uniqueness of \eqref{system}]\label{thm:existenceuniqueness}
Assume that the initial data have the following regularity:
$\bm u_0\in\bm H^\Delta_\mathbb{C}(\Omega_p)\cap\bm H^1_0(\Omega_p)$, 
$ \bm u_1\in \bm H^1_0(\Omega_p)$,
$\bm w_0\in \MB{\bm{W}_{\kint}}\cap\bm H^{\nabla}(\Omega_p)$, 
$\bm w_1\in \MB{\bm{W}_{\kint}}$,
$\varphi_0 \in  H^\Delta(\Omega_a)\cap H_0^1(\Omega_a)$,
$\varphi_1\in H^1_0(\Omega_a)$, 
and that the source terms are such that $\bm f_p\in C^1([0,T];\bm L^2(\Omega_p))$, $\bm g_p\in C^1([0,T];\bm L^2(\Omega_p))$ and $f_a\in C^1([0,T];L^2(\Omega_a))$. Then, problem \eqref{system} admits a unique strong solution $(\bm u,\bm w,\varphi)$ s.t.
\begin{alignat*}{5}
&\bm u&&\in C^2([0,T];\bm L^2(\Omega_p)) \cap
C^1([0,T];\bm H^1_0(\Omega_p)) \cap
C^0([0,T];\bm H^\Delta_\mathbb{C}(\Omega_p) \cap\bm H^1_0(\Omega_p)),\\
&\bm w&&\in C^2([0,T];\bm L^2(\Omega_p))\cap
C^1([0,T];\MB{\bm{W}_{\kint}}) \cap
C^0([0,T];\bm H^{\nabla}(\Omega_p) \cap\MB{\bm{W}_{\kint}}),\\
&\varphi&&\in C^2([0,T]; L^2(\Omega_a)) \cap
C^1([0,T]; H^1_0(\Omega_a)) \cap
C^0([0,T]; H^\Delta(\Omega_a) \cap H^1_0(\Omega_a)).
\end{alignat*}
\end{theorem}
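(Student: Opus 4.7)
The plan is to recast \eqref{system} as a first-order evolution problem in time and apply the Hille--Yosida--Phillips generation theorem, following the strategy used in \cite{bonaldi} for the elasto-acoustic coupling and in \cite{ezziani} for the pure poroelastic problem. First, I would introduce the auxiliary velocities $\widehat{\bm u}=\dot{\bm u}$, $\widehat{\bm w}=\dot{\bm w}$, $\widehat{\varphi}=\dot{\varphi}$ and collect the augmented state as $Y=(\bm u,\bm w,\varphi,\widehat{\bm u},\widehat{\bm w},\widehat{\varphi})^\top$. This turns \eqref{system} into an abstract Cauchy problem $\dot Y = \mathfrak{A} Y + \mathfrak{F}$, $Y(0)=Y_0$, on the Hilbert space $X=\HH\times\bm{L}^2(\Omega_p)\times\bm{L}^2(\Omega_p)\times L^2(\Omega_a)$, where $\mathfrak{A}$ is the unbounded operator read off from \eqref{system}. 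I would endow $X$ with the inner product induced by $\mathcal{A}(\cdot,\cdot)+\mathcal{M}(\cdot,\cdot)$ from \eqref{eq:bilinear_forms}, which is equivalent to the canonical Hilbertian one by Lemma~\ref{lemma:stab} and makes dissipativity transparent.

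Next, I would define $D(\mathfrak{A})\subset X$ as the set of elements enjoying the Sobolev regularity listed in the statement and in which the essential interface conditions \eqref{eq::contstress}--\eqref{eq::cont_pres} hold. The case distinction on $\kint$ is already absorbed in the space $\bm{W}_{\kint}$ of \eqref{eq:W_kint}, so that open, imperfect, and sealed pores are treated within a single framework. Density of $D(\mathfrak{A})$ in $X$ is standard, and dissipativity $\langle\mathfrak{A}Y,Y\rangle_X\le 0$ follows from the skew-symmetry of the coupling forms $\mathcal{C}^p+\mathcal{C}^a$ together with the nonnegativity of $\mathcal{B}$, via the same manipulation underlying the proof of Theorem~\ref{thm:stability}.

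The hard part will be the range condition: for some $\lambda>0$ one must invert $\lambda I - \mathfrak{A}$ on $X$. Eliminating the velocity-type components $\widehat{\bm u},\widehat{\bm w},\widehat{\varphi}$ algebraically, this reduces to a stationary coupled poroelasto-acoustic problem for the displacement-potential triple in $\HH$ with a zeroth-order coercive perturbation. Its well-posedness is obtained via the Lax--Milgram lemma, drawing continuity and coercivity of the associated bilinear form from Lemma~\ref{lemma:stab}. The genuine subtlety is uniformity with respect to $\kint\in(0,1)$: the interface contribution $\zeta(\kint)\,\widehat{\bm w}\cdot\bm n_p$ is unbounded as $\kint\to 0^+$, and the norm \eqref{eq:Wt_norm} on $\bm{W}_{\kint}$ has been tailored precisely so that this term is absorbed on the coercive side, allowing the three regimes to be handled simultaneously.

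Finally, once $\mathfrak{A}$ is shown to be m-dissipative, the Hille--Yosida--Phillips theorem, combined with the assumed $C^1$-in-time regularity of $\mathfrak{F}$ and with $Y_0\in D(\mathfrak{A})$ (which encodes precisely the compatibility assumptions placed on the initial data), yields a unique classical solution $Y\in C^1([0,T];X)\cap C^0([0,T];D(\mathfrak{A}))$. Unpacking this regularity in terms of the physical unknowns $(\bm u,\bm w,\varphi)$ delivers exactly the spaces asserted in the statement.
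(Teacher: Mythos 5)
Your overall strategy coincides with the paper's: first-order reformulation in the velocity variables, a Hilbert-space setting built on $\bm H^1_0(\Omega_p)\times\bm{W}_{\kint}\times H^1_0(\Omega_a)$ times the $L^2$ spaces, a domain encoding the interface conditions, quasi-monotonicity plus a Lax--Milgram argument for the range condition, and recovery of the stated regularity from the resolvent equations. The reduction of surjectivity to a stationary coercive problem and the observation that the three pore regimes are unified by $\bm{W}_{\kint}$ are exactly what the paper does.

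There is, however, one genuine flaw in the dissipativity step. You propose to endow the state space with the inner product induced by $\mathcal{A}+\mathcal{M}$ and invoke Lemma~\ref{lemma:stab} for equivalence with the canonical norm; but the coercivity statement \eqref{eq:A-coer} concerns $\mathcal{A}+\mathcal{B}$, not $\mathcal{A}$ alone, and without the $\mathcal{B}$-contribution the quadratic form $\mathcal{A}(\UU,\UU)+\mathcal{M}(\dot\UU,\dot\UU)$ controls neither $\norm{\bm w}_{\Omega_p}$ nor the trace term $\norm{\zeta(\kint)^{1/2}\,\bm w\cdot\bm n_p}_{\Gamma_I}$ entering the $\bm{W}_{\kint}$-norm \eqref{eq:Wt_norm}; it is therefore only a seminorm on your $X$ and cannot serve as an inner product. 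The paper repairs this by adding $(\eta k^{-1}\bm w_1,\bm w_2)_{\Omega_p}+(\zeta(\kint)\,\bm w_1\cdot\bm n_p,\bm w_2\cdot\bm n_p)_{\Gamma_I}$ to the scalar product, but then your claimed inequality $\langle\mathfrak{A}Y,Y\rangle_X\le 0$ fails: pairing the generator with the state produces the indefinite cross terms $-(\eta k^{-1}\dot{\bm w},\bm w)_{\Omega_p}-(\zeta(\kint)\,\dot{\bm w}\cdot\bm n_p,\bm w\cdot\bm n_p)_{\Gamma_I}$ (displacement against its own velocity), which no skew-symmetry cancels. The operator is only quasi-dissipative, and one must establish $(A\mathcal{U},\mathcal{U})_{\mathbb{V}}+\mu\norm{\mathcal{U}}_{\mathbb{V}}^2\ge 0$ for a suitable shift (the paper takes $\mu\ge 1/2$ after Young's inequality) and apply the perturbed Hille--Yosida theorem to $A+\mu I$. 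This is a standard and easily repaired adjustment, but as written your dissipativity claim and the norm-equivalence argument supporting it do not hold.
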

\begin{proof}
Let $\bm v=\dot{\bm u}$, $\bm z=\dot{\bm w}$, $\lambda=\dot{\varphi}$, and $\mathcal{U}=(\bm u,\bm v,\bm w,\bm z,\varphi,\lambda).$ We introduce the Hilbert space $\RRR{\mathbb{V}}=\bm H^1_0(\Omega_p)\times\bm L^2(\Omega_p)\times\MB{{\bm W}_{\kint}}\times\bm L^2(\Omega_p)\times H^1_0(\Omega_a)\times L^2(\Omega_a)$,
equipped with the scalar product
\RRR{
\begin{multline*}
(\mathcal{U}_1,\mathcal{U}_2)_{\mathbb{V}}=
(\rho \bm v_1 + \rho_f \bm z_1,\bm v_2)_{\Omega_p}+
(\rho_f \bm v_1 + \rho_w \bm z_1,\bm z_2)_{\Omega_p}+
(\rho_ac^{-2}\lambda_1,\lambda_2)_{\Omega_a} 
\\+
\left(\mathbb{C}:\bm\epsilon(\bm u_1),\bm\epsilon(\bm u_2)\right)_{\Omega_p}+
\left(m\nabla\cdot(\beta\bm u_1+\bm w_1),\nabla\cdot(\beta\bm u_2+\bm w_2)\right)_{\Omega_p}\\+ (\rho_a\nabla\varphi_1,\nabla\varphi_2)_{\Omega_a} + (\eta k^{-1} \bm w_1, \bm w_2)_{\Omega_p} + (\zeta(\tau) \bm w_1 \cdot \bm n_p, \bm w_2 \cdot \bm n_p)_{\Gamma_I},
\end{multline*}
}
%
where \MB{${\bm W}_{\kint}$ is defined in \eqref{eq:W_kint}}.
We remark that the scalar product is positive definite in \RRR{$\mathbb{V}\times \mathbb{V}$}, \RRR{cf. \cite{ezziani}}. 
We define the operator 
\begin{align*}
& A:\mathcal{D}(A)\subset\mathbb{V}\rightarrow\mathbb{V}
&& 
A\mathcal{U}=
\begin{pmatrix}
-\bm v \\
-\frac{1}{\rho_T}\left(\rho_w\nabla\cdot\bm\sigma+\frac{\rho_f\eta}{k}\bm z+\rho_f\nabla p\right)\\
-\bm z\\
\frac{1}{\rho_T}\left(\rho_f\nabla\cdot\bm\sigma+\frac{\rho\eta}{k}\bm z+\rho\nabla p\right)\\
-\lambda\\
-c^2 \MB{\rho_a^{-1}\nabla\cdot(\rho_a\nabla\varphi)}\\
\end{pmatrix},
\end{align*}
with $\rho_T=\rho\rho_w-\rho_f^2>0$, and 
\begin{align*}
\mathcal{D}(A)=\{&\mathcal{U}\in\mathbb{V}: \bm u\in\bm H^\Delta_\mathbb{C}(\Omega_p), \bm v\in\bm H^1_0(\Omega_p), \bm w\in \bm H^{\nabla}(\Omega_p),
\bm z\in \MB{{\bm W}_{\kint}}, \\
&\varphi\in H^\Delta(\Omega_a), \lambda\in H_0^1(\Omega_a);\;
(\bm \sigma+\rho_a\lambda\bm I)\cdot\bm n_p=\bm 0,\hspace{0.1cm}\text{on }\Gamma_I,\\
&\MB{\kint(p-\rho_a\lambda) -(1-\kint){\bm z}\cdot{\bm n}_p = 0}, \hspace{0.1cm}\text{on }\Gamma_I,\; (\nabla\varphi+\bm v+\bm z)\cdot\bm n_p=0,\hspace{0.1cm}\text{on }\Gamma_I\}.
\end{align*}
With the above notation, problem \eqref{system} can be reformulated as follows:
given $\mathcal{F}\in C^1([0,T];\mathbb{V})$ defined as $\mathcal{F}(t)= (\bm 0, (\rho_w\bm f_p-\rho_f\bm g_p)/\rho_T,	\bm 0, (\rho\bm g_p-\rho_f\bm f_p)/\rho_T, 0, c^2 f_a)$  and $\mathcal{U}_0\in\mathcal{D}(A)$, find $\mathcal{U}\in C^1([0,T];\mathbb{V})\cap C^0([0,T];\mathcal{D}(A))$ such that
\[
\begin{cases}
\dfrac{\rm d\mathcal{U}}{\rm dt}+A\mathcal{U}(t)=\mathcal{F}(t),\hspace{1cm}t\in(0,T],\\
\mathcal{U}(0)=\mathcal{U}_0.
\end{cases}
\]
Owing to the Hille--Yosida theorem, the above problem is well-posed \MB{provided  the existence of $\mu > 0$ such that $A+\mu I$ is maximal monotone}, \RRR{i.e. $(A\mathcal{U},\mathcal{U})_{\mathbb{V}} + \mu \|\mathcal{U}\|^2_{\mathbb{V}} \geq 0$ $\forall\ \mathcal{U}\in\mathcal{D}(A)$} \MB{and $A+\mu I:\mathcal{D}(A)\to\mathbb{V}$ is onto}.
The first condition follows from the definition of the scalar product in $\mathbb{V}$, the definition of $\mathcal{D}(A)$ and integration by parts:
\begin{align*}
(A\mathcal{U},\mathcal{U})_\mathbb{V}=&
-\left(\frac{\rho\rho_w}{\rho_T}\nabla\cdot\bm\sigma+
\frac{\rho\rho_f}{\rho_T}\frac{\eta}{k}\bm z+
\frac{\rho\rho_f}{\rho_T}\nabla p,\bm v\right)_{\Omega_p}
-\left(\mathbb{C}:\bm\epsilon(\bm v),\bm\epsilon(\bm u)\right)_{\Omega_p}\\
&+\left(\frac{\rho_f^2}{\rho_T}\nabla\cdot\bm\sigma+
\frac{\rho\rho_f}{\rho_T}\frac{\eta}{k}\bm z+
\frac{\rho\rho_f}{\rho_T}\nabla p,\bm v\right)_{\Omega_p}
-\MB{(\nabla\cdot\rho_a\nabla\varphi,\lambda)_{\Omega_a}}
\\&
-\left(\frac{\rho_f\rho_w}{\rho_T}\nabla\cdot\bm\sigma+
\frac{\rho_f^2}{\rho_T}\frac{\eta}{k}\bm z+
\frac{\rho_f^2}{\rho_T}\nabla p,\bm z\right)_{\Omega_p}
-(\rho_a\nabla\lambda,\nabla\varphi)_{\Omega_a} \\&
+\left(\frac{\rho_w\rho_f}{\rho_T}\nabla\cdot\bm\sigma+
\frac{\rho_w\rho}{\rho_T}\frac{\eta}{k}\bm z+
\frac{\rho_w\rho}{\rho_T}\nabla p,\bm z\right)_{\Omega_p} \RRR{-(\eta k^{-1}\bm z, \bm w)_{\Omega_p}}\\
&-\left(m\nabla\cdot(\beta\bm v+\bm z),\nabla\cdot(\beta\bm u+\bm w)\right)_{\Omega_p} 
\RRR{-(\zeta(\tau)\bm z\cdot \bm n_p, \bm w\cdot \bm n_p)_{\Gamma_I}}\\
&=\norm{(\eta/k)^{\frac12}\bm z}_{\Omega_p}^2  
\MB{+\norm{\zeta(\tau)^{\frac12}\bm z\cdot \bm n_p}_{\Gamma_I}^2}
 \RRR{-((\eta/k) \bm z, \bm w)_{\Omega_p}\hspace{-1mm}
 -\hspace{-0.5mm}(\zeta(\tau)\bm z\cdot \bm n_p, \bm w\cdot \bm n_p)_{\Gamma_I}},
\end{align*}
where we have also used that all the terms on $\Gamma_I$ (\MB{except $\norm{\zeta(\tau)^{1/2}\bm z\cdot \bm n_p}_{\Gamma_I}^2$ for $\kint\in(0,1)$) vanish}. Thus, by choosing \RRR{$\mu \geq 1/2$,}
and applying the Young's inequality, we obtain \RRR{$(A\mathcal{U},\mathcal{U})_{\mathbb{V}} + \mu \|\mathcal{U}\|^2_{\mathbb{V}} \geq 0$.}
Now, we prove that $A+\nu I$ is surjective for all $\nu>0$. 
\MB{The surjectivity of $A+\nu I$ is equivalent to verify that for any $\mathcal{F}\in\mathbb{V}$, there exists  $\mathcal{U}\in\mathcal{D}(A)$ s.t. $A\mathcal{U} +\nu \mathcal{U}=\mathcal{F}$, i.e.}
\begin{subequations} \label{eq::cont_stab}
\begin{align}
&\RRR{\nu}\bm u-\bm v=\bm{\mathcal{F}}_1,\label{eq::cont_stab_1}\\
&\RRR{\nu} \bm v-\dfrac{\rho_w}{\rho_T}\nabla\cdot\bm\sigma-\dfrac{\rho_f}{\rho_T}\dfrac{\eta}{k}\bm z-\dfrac{\rho_f}{\rho_T}\nabla p=\bm{\mathcal{F}}_2,\label{eq::cont_stab_2}\\
&\RRR{\nu} \bm w-\bm z=\bm{\mathcal{F}}_3,\label{eq::cont_stab_3}\\
&\RRR{\nu} \bm z+\dfrac{\rho_f}{\rho_T}\nabla\cdot\bm\sigma+\dfrac{\rho}{\rho_T}\dfrac{\eta}{k}\bm z+\dfrac{\rho}{\rho_T}\nabla p=\bm{\mathcal{F}}_4,\label{eq::cont_stab_4}\\
&\RRR{\nu} \varphi-\lambda= \mathcal{F}_5,\label{eq::cont_stab_5}\\
&\RRR{\nu} \lambda-\MB{c^2\rho_a^{-1}\nabla\cdot(\rho_a\nabla\varphi)}= \mathcal{F}_6.\label{eq::cont_stab_6}
\end{align}
\end{subequations}
\MB{Hence, by plugging $\bm v=\RRR{\nu} \bm u-\bm{\mathcal{F}}_1$, $\bm z= \RRR{\nu} \bm w-\bm{\mathcal{F}}_3$, and $\lambda=\RRR{\nu} \varphi-\mathcal{F}_5$ respectively in \eqref{eq::cont_stab_2}, \eqref{eq::cont_stab_4}, and \eqref{eq::cont_stab_6} and rearranging, we rewrite the previous system as}
\begin{equation*}
\begin{cases}
\RRR{\nu^2}(\rho\bm u+\rho_f\bm w) -\nabla\cdot\bm\sigma =\rho(\MB{\nu}\bm{\mathcal{F}}_1+\bm{\mathcal{F}}_2)
+\rho_f(\RRR{\nu}\bm{\mathcal{F}}_3+\bm{\mathcal{F}}_4)=\bm G_1,\\
\RRR{\nu^2}( \rho_f\bm u+\rho_w\bm w)+\dfrac{\RRR{\nu}\eta}{k}\bm w+\nabla p=\rho_f(\MB{\nu}\bm{\mathcal{F}}_1+\bm{\mathcal{F}}_2)+\rho_w(\RRR{\nu}\bm{\mathcal{F}}_3+\bm{\mathcal{F}}_4)+\dfrac{\eta}{k}\bm{\mathcal{F}}_3=\bm G_2,\\
\MB{\nu^2}\rho_a c^{-2}\varphi-\MB{\nabla\cdot(\rho_a\nabla\varphi)}=
\rho_a c^{-2}(\MB{\nu}\mathcal{F}_5+\mathcal{F}_6)=G_3.
\end{cases}
\end{equation*}
Owing to $\bm n_p=-\bm n_a$ on $\Gamma_{I}$, equations \eqref{eq::cont_stab_1}, \eqref{eq::cont_stab_3} and \eqref{eq::cont_stab_5}, and the transmission conditions on $\Gamma_{I}$ embedded in the definition of $\mathcal{D}(A)$, the variational formulation of the above problem reads:
find $(\bm u,\bm w,\varphi)\in\bm H_0^1(\Omega_p)\times \MB{{\bm W}_{\kint}}\times H_0^1(\Omega_a)$ s.t. 
\begin{equation*}
\mathcal{A}((\bm u,\bm w,\varphi),(\bm v,\bm z,\lambda))=\mathcal{L}(\bm v,\bm z,\lambda),
\quad\text{for all }\; (\bm v,\bm z,\lambda)\in\bm H_0^1(\Omega_p)\times\MB{{\bm W}_{\kint}} \times H_0^1(\Omega_a),
\end{equation*}
with
\begin{align*}
\mathcal{A}((\bm u,\bm w,\varphi),(\bm v,\bm z,\lambda))&=\RRR{\nu^2}(\rho\bm u+\rho_f\bm w,\bm v)_{\Omega_p}+
\left(\mathbb{C}\bm\epsilon(\bm u),\bm\epsilon(\bm v)\right)_{\Omega_p}
+ \RRR{\nu^2}(\rho_f\bm u+\rho_w\bm w,\bm z)_{\Omega_p}\\&+
\left(m\nabla\cdot(\beta\bm u+\bm w),\nabla\cdot(\beta\bm v+\bm z)\right)_{\Omega_p}+
\RRR{\nu}\left(\eta k^{-1}\bm w,\bm z\right)_{\Omega_p}\\&+
\MB{\nu\left(\zeta(\tau)\bm w\cdot{\bm n}_p,\bm z\cdot{\bm n}_p\right)_{\Gamma_I}}+
\MB{\nu^2}(\rho_a c^{-2}\varphi,\lambda)_{\Omega_a}\\&
+(\rho_a\nabla\varphi,\nabla\lambda)_{\Omega_a}+
\MB{\nu(\rho_a\varphi, \bm v\cdot\bm n_p)_{\Gamma_I}}- \MB{\nu(\bm u\cdot\bm n_p,\rho_a\lambda)_{\Gamma_I}},\\ \text{and}\quad
\mathcal{L}(\bm v,\bm z,\lambda)&=(\bm G_1,\bm v)_{\Omega_p}+
(\bm G_2,\bm z)_{\Omega_p}+( G_3,\lambda)_{\Omega_a}
\MB{-(\mathcal{\bm F}_1\cdot{\bm n}_p, \rho_a\lambda)_{\Gamma_I}}\\&
+\MB{\left(\zeta(\tau)\mathcal{\bm F}_3\cdot{\bm n}_p,\bm z\cdot{\bm n}_p\right)_{\Gamma_I}
+(\rho_a\mathcal{F}_5,\bm v\cdot{\bm n}_p)_{\Gamma_I}}.
\end{align*}
\MB{The well-posedness of the previous problem follows from the Lax-Milgram Lemma, since $\mathcal{A}$ is coercive for all $\nu>0$}. In addition, \MB{owing to \eqref{eq::cont_stab_2}, \eqref{eq::cont_stab_4}, and \eqref{eq::cont_stab_6}, we infer that}  $\bm u\in\bm H_\mathbb{C}^\Delta(\Omega_p)\cap\bm H_0^1(\Omega_p)$, $\bm w\in {\bm H^{\nabla}} (\Omega_p)\cap\MB{{\bm W}_{\kint}}$, and $\varphi\in H^\Delta(\Omega_a)\cap H_0^1(\Omega_a)$. Moreover, this gives $(\bm v,\bm z,\lambda)\in\bm H_0^1(\Omega_p)\times\MB{{\bm W}_{\kint}}\times H_0^1(\Omega_a)$ due to \eqref{eq::cont_stab_1}, \eqref{eq::cont_stab_3}, and \eqref{eq::cont_stab_5}. Then $\mathcal{U}\in\mathcal{D}(A)$ and the proof is complete.
\end{proof}

We conclude the Appendix with some technical results needed in the analysis.
The first Lemma \MB{hinges on} Assumption \ref{ass::regular} and the trace inverse inequality \eqref{eq::traceinv}. 
\begin{lemma}
The following bounds hold:
\begin{align}
\norm{ \alpha^{-1/2} \llbrace \bm \sigma_h ( \bm v)\rrbrace}_{\mathcal{F}_h^p}\lesssim&
\frac{1}{\sqrt{c_1}} \norm{\mathbb{C}^{1/2}\bm\epsilon_h(\bm v)}_{\Omega_p} && \forall\bm v\in \bm V_h^p,\label{eq::f2o1}\\
\norm{\chi^{-1/2}\llbrace\rho_a\nabla_h\psi\rrbrace}_{\mathcal{F}_h^a}\lesssim &  \frac{1}{\sqrt{c_2}} \norm{\rho_a^{1/2}\nabla_h\psi}_{\Omega_a}&&\forall\psi\in V_h^a,\label{eq::f2o2} \\ 
\norm{ \gamma^{-1/2} \llbrace m \nabla_h\cdot\bm z  \rrbrace }_{\mathcal{F}_h^\star} \lesssim &
\frac{1}{\sqrt{c_3}} \norm{m^{1/2}\nabla_h\cdot\bm z}_{\Omega_p}&&\forall\bm z\in\bm V_h^p,\label{eq::f2o3}
\end{align}
where $c_1$, $c_2$ and $c_3$ are the constants appearing in \eqref{eq::stab_1}, \eqref{eq::stab_2} and \eqref{eq::stab_3}, respectively.
\label{lem::stab_fun}
\end{lemma}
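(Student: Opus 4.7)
The three bounds are proved by the same template, following the standard PolyDG flux-control argument; I would handle them one by one with only minor cosmetic changes. Let me focus on \eqref{eq::f2o1}, since the other two are analogous.

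The plan is to fix an arbitrary face $F\in\mathcal{F}_h^p$ shared by two neighbors $\kappa^+,\kappa^-\in\mathcal{T}_h^p$ (or lying on $\partial\kappa$ in the boundary case) and estimate $\int_F \alpha^{-1}|\llbrace\bm\sigma_h(\bm v)\rrbrace|^2$ elementwise. First, by the arithmetic-geometric inequality $|\llbrace\bm\sigma_h(\bm v)\rrbrace|^2 \le \tfrac{1}{2}\bigl(|\bm\sigma_h(\bm v)_{|\kappa^+}|^2+|\bm\sigma_h(\bm v)_{|\kappa^-}|^2\bigr)$. Next, using definition \eqref{eq::stab_1} of $\alpha$, for either $\kappa=\kappa^+$ or $\kappa=\kappa^-$ one has $\alpha|_F \ge c_1 \overline{\mathbb{C}}_\kappa p_{p,\kappa}^2 h_\kappa^{-1}$, and hence $\alpha^{-1}|_F \le c_1^{-1}\overline{\mathbb{C}}_\kappa^{-1} p_{p,\kappa}^{-2} h_\kappa$.

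Combining these, applying the trace-inverse inequality \eqref{eq::traceinv} componentwise to the polynomial $\bm\sigma_h(\bm v)_{|\kappa}\in\mathcal{P}_{p_{p,\kappa}-1}(\kappa)^{d\times d}$ to obtain $\|\bm\sigma_h(\bm v)\|_{\partial\kappa}^2 \lesssim p_{p,\kappa}^2 h_\kappa^{-1}\|\bm\sigma_h(\bm v)\|_\kappa^2$, and finally exploiting the elementwise bound $|\bm\sigma_h(\bm v)|^2 = |\mathbb{C}:\bm\epsilon_h(\bm v)|^2 \le \overline{\mathbb{C}}_\kappa|\mathbb{C}^{1/2}:\bm\epsilon_h(\bm v)|^2$ (which follows from the definition of $\overline{\mathbb{C}}_\kappa$ as $|\mathbb{C}^{1/2}|_2^2$), one gets a contribution on each face of order $c_1^{-1}\|\mathbb{C}^{1/2}\bm\epsilon_h(\bm v)\|_\kappa^2$. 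Summing over all faces and using Assumption~\ref{ass::3} (which, together with polytopic regularity, ensures that each element $\kappa$ has contributions from at most $O(1)$ faces with comparable $h$ and $p$) collapses the sum into $c_1^{-1}\|\mathbb{C}^{1/2}\bm\epsilon_h(\bm v)\|_{\Omega_p}^2$, yielding \eqref{eq::f2o1}.

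The bound \eqref{eq::f2o2} is obtained identically, replacing $\mathbb{C}:\bm\epsilon_h$ by $\rho_a\nabla_h$, $\overline{\mathbb{C}}_\kappa$ by $\overline{\rho}_{a,\kappa}$, and using $\chi$ in place of $\alpha$. Bound \eqref{eq::f2o3} is the same argument on $\mathcal{F}_h^\star$: for internal poroelastic faces the two-sided definition of $\gamma$ is used exactly as for $\alpha$, while for boundary-type faces in $\mathcal{F}_h^{p,b}$ (and, when $\kint=0$, in $\mathcal{F}_h^I$) one simply drops the factor $\tfrac12$ in the arithmetic-geometric step and uses the one-sided definition $\gamma|_F = \overline{m}_\kappa p_{p,\kappa}^2 h_\kappa^{-1}$; the constant $c_3$ does not appear on these boundary faces, but the hidden constant in $\lesssim$ absorbs this harmlessly.

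The only non-routine point is bookkeeping for \eqref{eq::f2o3}: one must check that the union $\mathcal{F}_h^\star=\mathcal{F}_h^p\cup\mathcal{F}_h^I$ used when $\kint=0$ is compatible with the stabilization definition \eqref{eq::stab_2}, which indeed extends $\gamma$ to $\mathcal{F}_h^I$ with the boundary-type formula. Once this is observed, the argument sketched above goes through verbatim, and no further assumption on $\kint$ is needed for these stability-flux controls.
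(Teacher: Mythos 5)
Your argument is correct and is exactly the standard flux-control proof the paper only alludes to (the paper gives no written proof of this lemma, just the remark that it hinges on Assumption~\ref{ass::regular} and the trace-inverse inequality \eqref{eq::traceinv}): bound $\alpha^{-1}|_F$ by $c_1^{-1}(\overline{\mathbb{C}}_\kappa p_{p,\kappa}^2 h_\kappa^{-1})^{-1}$ via the max in \eqref{eq::stab_1}, split the average, apply \eqref{eq::traceinv} to the piecewise polynomial $\bm\sigma_h(\bm v)$, and absorb $\mathbb{C}$ through $\overline{\mathbb{C}}_\kappa=|\mathbb{C}^{1/2}|_2^2$. One small caveat: your parenthetical claim that each element contributes through only $O(1)$ faces is not guaranteed on polytopic meshes (the paper explicitly stresses that the number of faces per element is unbounded), but it is also unnecessary --- summing the squared face norms over all faces of $\kappa$ gives exactly $\norm{\bm\sigma_h(\bm v)}_{\partial\kappa}^2$, to which \eqref{eq::traceinv} applies in one shot, so the bookkeeping closes without any face count.
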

\MB{The following Lemma establishes the coercivity and boundedness of the discrete bilinear form $\mathcal{A}_h$ defined in \eqref{eq:bilinear_Ah}}.
\begin{lemma}
Let Assumptions \ref{ass::regular} and \ref{ass::3} be satisfied. Then,
\MB{\begin{align*}
& \mathcal{A}_h^e(\bm u,\bm v)\lesssim 
\norm{\bm u}_{\rm dG,e}
\norm{\bm v}_{\rm dG,e}
&&  \mathcal{A}_h^e(\bm u,\bm u)\gtrsim 
\norm{\bm u}_{\rm dG,e}^2
&& \forall \bm u,\bm v\in\bm V_h^p,\\
& \mathcal{A}_h^p(\bm u,\bm v)\lesssim 
|\bm u|_{\rm dG,p}
|\bm v|_{\rm dG,p}
&& 
\mathcal{A}_h^p(\bm u,\bm u)\gtrsim 
|\bm u|_{\rm dG,p}^2
&& \forall \bm u,\bm v\in\bm V_h^p,\\
& \mathcal{A}_h^a(\varphi,\psi)\lesssim
\norm{\varphi}_{\rm dG,a}
\norm{\psi}_{\rm dG,a}
&&\mathcal{A}_h^a(\varphi,\varphi)\gtrsim 
\norm{\varphi}_{\rm dG,a}^2
&& \forall \varphi,\psi\in V_h^a,\\
&\mathcal{A}_h^e(\bm u,\bm v)\lesssim
\trinorm{\bm u}_{\rm dG,e}
\norm{\bm v}_{\rm dG,e}
&&\qquad\forall \bm u \in\bm H^2(\mathcal{T}_h^p) 
&&\forall \bm v \in \bm{V}_h^p,\\
&\mathcal{A}_h^a(\varphi,\psi)\lesssim
\trinorm{\varphi}_{\rm dG,a}
\norm{\psi}_{\rm dG,a}
&&\qquad\forall \varphi \in H^2(\mathcal{T}_h^a) \quad
&&\forall \psi \in \bm{V}_h^a,\\
&\mathcal{A}_h^p(\bm w,\bm z)\lesssim
\trinorm{\bm w}_{\rm dG,p}
|\bm z|_{\rm dG,p}
&&\qquad\forall \bm w \in\bm H^2(\mathcal{T}_h^p)
&&\forall \bm z \in \bm{V}_h^p.
\end{align*}}
\label{lem::cont}
The coercivity bounds hold provided that the stability parameters $c_1$, $c_2$ and $c_3$ appearing in \eqref{eq::stab_1},\eqref{eq::stab_2} and \eqref{eq::stab_3}, respectively, are chosen sufficiently large.
\end{lemma}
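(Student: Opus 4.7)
\medskip

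\noindent
\textbf{Proof plan.} The three bilinear forms $\mathcal{A}_h^e$, $\mathcal{A}_h^p$ and $\mathcal{A}_h^a$ share the standard symmetric interior penalty Galerkin (SIPG) structure
$$
a_h(u,v) = (\kappa\nabla_h u,\nabla_h v) - \langle\llbrace\kappa\nabla_h u\rrbrace,\llbracket v\rrbracket\rangle - \langle\llbracket u\rrbracket,\llbrace\kappa\nabla_h v\rrbrace\rangle + \langle\sigma\llbracket u\rrbracket,\llbracket v\rrbracket\rangle,
$$
(modulo replacing $\nabla$/jumps by $\bm\epsilon$/tensor jumps or by $\nabla\cdot$/normal jumps), so I would establish the six bounds by running the same argument three times in parallel. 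The only nonstandard points are that $\mathcal{A}_h^p$ uses the normal jump $\llbracket\cdot\rrbracket_{\bm n}$ on the enlarged set $\mathcal{F}_h^\star$ (which includes $\mathcal{F}_h^I$ when $\kint=0$); this is a cosmetic change that does not affect the argument. Throughout, Lemma~\ref{lem::stab_fun} is the workhorse: it converts a weighted-mean of a discrete flux into a volume norm, up to the factor $c_i^{-1/2}$.

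\medskip

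\noindent
\textbf{Continuity on $\bm V_h^p\times\bm V_h^p$ (and on $V_h^a\times V_h^a$).} For $\bm u,\bm v\in\bm V_h^p$, I would apply Cauchy--Schwarz to each of the four terms of $\mathcal{A}_h^e(\bm u,\bm v)$, controlling the two consistency terms by inserting $\alpha^{\pm 1/2}$:
$$
|\langle\llbrace\bm\sigma_h(\bm u)\rrbrace,\llbracket\bm v\rrbracket\rangle_{\mathcal{F}_h^p}|
\le \norm{\alpha^{-1/2}\llbrace\bm\sigma_h(\bm u)\rrbrace}_{\mathcal{F}_h^p}\norm{\alpha^{1/2}\llbracket\bm v\rrbracket}_{\mathcal{F}_h^p}
\lesssim c_1^{-1/2}\norm{\bm u}_{\rm dG,e}\norm{\bm v}_{\rm dG,e},
$$
where the last inequality uses \eqref{eq::f2o1}. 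Summing the four contributions yields $\mathcal{A}_h^e(\bm u,\bm v)\lesssim\norm{\bm u}_{\rm dG,e}\norm{\bm v}_{\rm dG,e}$. The same argument (with \eqref{eq::f2o3} on $\mathcal{F}_h^\star$ and \eqref{eq::f2o2} on $\mathcal{F}_h^a$) delivers the bounds for $\mathcal{A}_h^p$ and $\mathcal{A}_h^a$.

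\medskip

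\noindent
\textbf{Coercivity.} Setting $\bm v=\bm u$ and exploiting symmetry, I would write
$$
\mathcal{A}_h^e(\bm u,\bm u) = \norm{\mathbb{C}^{1/2}\bm\epsilon_h(\bm u)}_{\Omega_p}^2 -2\langle\llbrace\bm\sigma_h(\bm u)\rrbrace,\llbracket\bm u\rrbracket\rangle_{\mathcal{F}_h^p} + \norm{\alpha^{1/2}\llbracket\bm u\rrbracket}_{\mathcal{F}_h^p}^2,
$$
and bound the cross term by Young's inequality with parameter $\delta>0$:
$$
2|\langle\llbrace\bm\sigma_h(\bm u)\rrbrace,\llbracket\bm u\rrbracket\rangle_{\mathcal{F}_h^p}|
\le \delta^{-1}\norm{\alpha^{-1/2}\llbrace\bm\sigma_h(\bm u)\rrbrace}_{\mathcal{F}_h^p}^2 + \delta\norm{\alpha^{1/2}\llbracket\bm u\rrbracket}_{\mathcal{F}_h^p}^2.
$$
Invoking \eqref{eq::f2o1} this becomes $\le C\delta^{-1}c_1^{-1}\norm{\mathbb{C}^{1/2}\bm\epsilon_h(\bm u)}_{\Omega_p}^2 + \delta\norm{\alpha^{1/2}\llbracket\bm u\rrbracket}_{\mathcal{F}_h^p}^2$. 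Choosing $\delta=1/2$ and then $c_1\ge 4C$ makes both coefficients strictly positive, giving the desired coercivity. The parallel argument with \eqref{eq::f2o3} and large enough $c_2$ yields $\mathcal{A}_h^p(\bm u,\bm u)\gtrsim|\bm u|_{\rm dG,p}^2$, and with \eqref{eq::f2o2} and large enough $c_3$ yields $\mathcal{A}_h^a(\varphi,\varphi)\gtrsim\norm{\varphi}_{\rm dG,a}^2$.

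\medskip

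\noindent
\textbf{Extended continuity on $\bm H^2(\mathcal{T}_h^p)$ and $H^2(\mathcal{T}_h^a)$.} When the first argument lies only in a broken Sobolev space, Lemma~\ref{lem::stab_fun} is no longer applicable, so I cannot bound $\norm{\alpha^{-1/2}\llbrace\bm\sigma_h(\bm u)\rrbrace}_{\mathcal{F}_h^p}$ by $\norm{\bm u}_{\rm dG,e}$. However, by definition of the trinorms in \eqref{eq:trinorm_tot}, this very quantity is one of the summands of $\trinorm{\bm u}_{\rm dG,e}^2$. Thus the same Cauchy--Schwarz estimates used in the continuity step give $\mathcal{A}_h^e(\bm u,\bm v)\lesssim\trinorm{\bm u}_{\rm dG,e}\norm{\bm v}_{\rm dG,e}$, and analogously for $\mathcal{A}_h^p$ (using the trinorm on $\mathcal{F}_h^\star$) and $\mathcal{A}_h^a$. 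The main subtlety, and the only genuine obstacle, is matching the correct face sets in the $\mathcal{A}_h^p$ estimate when $\kint=0$: the enlargement $\mathcal{F}_h^\star=\mathcal{F}_h^p\cup\mathcal{F}_h^I$ must be used consistently in both the norm definition and in \eqref{eq::f2o3}, as already accommodated in the definitions above.
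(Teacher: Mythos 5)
Your proposal is correct and follows essentially the same route as the paper, which simply invokes Lemma~\ref{lem::stab_fun} together with ``standard arguments'' (citing \cite{AntoniettiMazzieri2018} and \cite[Lemma A.2]{bonaldi}); your write-up is just the explicit version of that argument: weighted Cauchy--Schwarz with $\alpha^{\pm1/2}$ (resp.\ $\gamma^{\pm1/2}$, $\chi^{\pm1/2}$) for continuity, Young's inequality plus a sufficiently large penalty constant for coercivity, and the observation that the extra term in the $\trinorm{\cdot}$-norms replaces the inverse estimate when the first argument is only in a broken $H^2$ space. No gaps.
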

\begin{proof}
\RRR{The proof is based on employing Lemma \ref{lem::stab_fun} and standard arguments. See also \cite{AntoniettiMazzieri2018} and \cite[Lemma A.2]{bonaldi}.}
\end{proof}

\bibliographystyle{abbrv}

\begin{thebibliography}{10}

\bibitem{M2AN_2013__47_3_903_0}
C.~Agut and J.~Diaz.
\newblock Stability analysis of the {I}nterior {P}enalty {D}iscontinuous
  {G}alerkin method for the wave equation.
\newblock {\em ESAIM: Mathematical Modelling and Numerical Analysis -
  Mod\'elisation Math\'ematique et Analyse Num\'erique}, 47(3):903--932, 2013.

\bibitem{zunino}
I.~Ambartsumyan, E.~Khattatov, I.~Yotov, and P.~Zunino.
\newblock A {L}agrange multiplier method for a {S}tokes--{B}iot
  fluid--poroelastic structure interaction model.
\newblock {\em Numerische Mathematik}, 140(2):513--553, 2018.

\bibitem{AntoniettiVeraniVergaraZonca_2019}
P.~Antonietti, M.~Verani, C.~Vergara, and S.~Zonca.
\newblock Numerical solution of fluid-structure interaction problems by means
  of a high order {D}iscontinuous {G}alerkin method on polygonal grids.
\newblock {\em Finite Elem. Anal. Des.}, 159:1--14, 2019.

\bibitem{bonaldi}
P.~F. Antonietti, F.~Bonaldi, and I.~Mazzieri.
\newblock A high-order discontinuous {G}alerkin approach to the elasto-acoustic
  problem.
\newblock {\em Comput. Methods Appl. Mech. Engrg.}, 358:112634, 29, 2020.

\bibitem{AntoniettiBonaldiMazzieri_2019b}
P.~F. Antonietti, F.~Bonaldi, and I.~Mazzieri.
\newblock Simulation of three-dimensional elastoacoustic wave propagation based
  on a {D}iscontinuous {G}alerkin {S}pectral {E}lement {M}ethod.
\newblock {\em Internat. J. Numer. Methods Engrg.}, 121(10):2206--2226, 2020.

\bibitem{AntoniettiFacciolaRussoVerani_2019}
P.~F. Antonietti, C.~Facciol\`a, A.~Russo, and M.~Verani.
\newblock Discontinuous {G}alerkin {A}pproximation of {F}lows in {F}ractured
  {P}orous {M}edia on {P}olytopic {G}rids.
\newblock {\em SIAM J. Sci. Comput.}, 41(1):A109--A138, 2019.

\bibitem{AntoniettiGianiHouston_2013}
P.~F. Antonietti, S.~Giani, and P.~Houston.
\newblock {$hp$}-version composite {D}iscontinuous {G}alerkin methods for
  elliptic problems on complicated domains.
\newblock {\em SIAM J. Sci. Comput.}, 35(3):A1417--A1439, 2013.

\bibitem{AntoniettiMazzieri2018}
P.~F. Antonietti and I.~Mazzieri.
\newblock High-order discontinuous {G}alerkin methods for the elastodynamics
  equation on polygonal and polyhedral meshes.
\newblock {\em Comput. Methods Appl. Mech. Engrg.}, 342:414--437, 2018.

\bibitem{AntoniettiMazzieriMuhrNikolicWohlmuth_2020}
P.~F. Antonietti, I.~Mazzieri, M.~Muhr, V.~Nikoli\'c, and B.~Wohlmuth.
\newblock A high-order discontinuous {G}alerkin method for nonlinear sound
  waves.
\newblock {\em J. Comput phys}, 415:109484, 2020.

\bibitem{Arnoldbrezzicockburnmarini2002}
D.~N. Arnold, F.~Brezzi, B.~Cockburn, and L.~D. Marini.
\newblock Unified analysis of discontinuous {G}alerkin methods for elliptic
  problems.
\newblock {\em SIAM Journal on Numerical Analysis}, 39(5):1749--1779, 2001/02.

\bibitem{BaBoCoDiPiTe2012}
F.~Bassi, L.~Botti, A.~Colombo, D.~A. {Di Pietro}, and P.~Tesini.
\newblock On the flexibility of agglomeration based physical space
  discontinuous {G}alerkin discretizations.
\newblock {\em J. Comput. Phys.}, 231(1):45--65, 2012.

\bibitem{wohlmuth}
R.~L. Berge, I.~Berre, E.~Keilegavlen, J.~M. Nordbotten, and B.~Wohlmuth.
\newblock Finite volume discretization for poroelastic media with fractures
  modeled by contact mechanics.
\newblock {\em International Journal for Numerical Methods in Engineering},
  121(4):644--663, 2020.

\bibitem{BERMUDEZ200317}
A.~Berm\`udez, R.~Rodr\`iguez, and D.~Santamarina.
\newblock Finite element approximation of a displacement formulation for
  time-domain elastoacoustic vibrations.
\newblock {\em Journal of Computational and Applied Mathematics}, 152(1):17 --
  34, 2003.

\bibitem{biot1941general}
M.~A. Biot.
\newblock General theory of three-dimensional consolidation.
\newblock {\em Journal of applied physics}, 12(2):155--164, 1941.

\bibitem{botti}
L.~Botti, M.~Botti, and D.~A. {Di Pietro}.
\newblock An abstract analysis framework for monolithic discretisations of
  poroelasticity with application to {H}ybrid {H}igh-{O}rder methods.
\newblock {\em Comput. Math. Appl.}, June 2020.

\bibitem{CangianiDongGeorgoulis_2017}
A.~Cangiani, Z.~Dong, and E.~H. Georgoulis.
\newblock {$hp$}-version space-time discontinuous {G}alerkin methods for
  parabolic problems on prismatic meshes.
\newblock {\em SIAM J. Sci. Comput.}, 39(4):A1251--A1279, 2017.

\bibitem{cangiani2020hpversion}
A.~Cangiani, Z.~Dong, and E.~H. Georgoulis.
\newblock $hp$-version discontinuous {G}alerkin methods on essentially
  arbitrarily-shaped elements, 2020.

\bibitem{CangianiDongGeorgoulisHouston_2016}
A.~Cangiani, Z.~Dong, E.~H. Georgoulis, and P.~Houston.
\newblock {$hp$}-version discontinuous {G}alerkin methods for
  advection-diffusion-reaction problems on polytopic meshes.
\newblock {\em ESAIM Math. Model. Numer. Anal.}, 50(3):699--725, 2016.

\bibitem{CangianiDongGeorgoulisHouston_2017}
A.~Cangiani, Z.~Dong, E.~H. Georgoulis, and P.~Houston.
\newblock {\em $hp$-version discontinuous {G}alerkin methods on polytopic
  meshes}.
\newblock SpringerBriefs in Mathematics. Springer International Publishing,
  2017.

\bibitem{cangiani2014hp}
A.~Cangiani, E.~H. Georgoulis, and P.~Houston.
\newblock hp-version discontinuous {G}alerkin methods on polygonal and
  polyhedral meshes.
\newblock {\em Mathematical Models and Methods in Applied Sciences},
  24(10):2009--2041, 2014.

\bibitem{carcione2014book}
J.~Carcione.
\newblock {\em Wave Fields in Real Media}, volume~38.
\newblock Elsevier Science, 2014.

\bibitem{castagnede1998ultrasonic}
B.~Castagnede, A.~Aknine, M.~Melon, and C.~Depollier.
\newblock Ultrasonic characterization of the anisotropic behavior of
  air-saturated porous materials.
\newblock {\em Ultrasonics}, 36(1-5):323--341, 1998.

\bibitem{chiavassa_lombard_2013}
G.~Chiavassa and B.~Lombard.
\newblock Wave propagation across acoustic/{B}iot's media: A finite-difference
  method.
\newblock {\em Communications in Computational Physics}, 13(4):985--1012, 2013.

\bibitem{CongreveHouston2019}
S.~Congreve and P.~Houston.
\newblock Two-grid hp-{DGFEM}s on agglomerated coarse meshes.
\newblock {\em PAMM}, 19(1):e201900175, 2019.

\bibitem{delapuente2008}
J.~de~la Puente, M.~Dumbser, M.~K\"aser, and H.~Igel.
\newblock Discontinuous {G}alerkin methods for wave propagation in poroelastic
  media.
\newblock {\em Geophysics}, 73(5):T77--T97, 2008.

\bibitem{sarkis2010}
M.~Dryja and M.~Sarkis.
\newblock Additive average schwarz methods for discretization of elliptic
  problems with highly discontinuous coefficients.
\newblock {\em Computational Methods in Applied Mathematics}, 10(2):164 -- 176,
  2010.

\bibitem{ezziani}
A.~Ezziani.
\newblock {\em {Mod{\'e}lisation math{\'e}matique et num{\'e}rique de la
  propagation d'ondes dans les milieux visco{\'e}lastiques et
  poro{\'e}lastiques}}.
\newblock Theses, {ENSTA ParisTech}, 2005.

\bibitem{FKTW2010}
B.~Flemisch, M.~Kaltenbacher, S.~Triebenbacher, and B.~Wohlmuth.
\newblock The equivalence of standard and mixed finite element methods in
  applications to elasto-acoustic interaction.
\newblock {\em SIAM J. Sci. Comput.}, 32(4):1980–2006, 2010.

\bibitem{Flemisch2006}
B.~Flemisch, M.~Kaltenbacher, and B.~Wohlmuth.
\newblock Elasto-acoustic and acoustic-acoustic coupling on non-matching grids.
\newblock {\em Internat. J. Numer. Methods Engrg.}, 67:1791--1810, 2006.

\bibitem{GrScSc06}
M.~Grote, A.~Schneebeli, and D.~Sch{\"o}tzau.
\newblock Discontinuous {G}alerkin finite element method for the wave equation.
\newblock {\em SIAM J. Numer. Anal.}, 44(6):2408--2431, 2006.

\bibitem{gurevich1999interface}
B.~Gurevich and M.~Schoenberg.
\newblock Interface conditions for {B}iot’s equations of poroelasticity.
\newblock {\em J. Acoust. Soc. Am.}, 105(5):2585--2589, 1999.

\bibitem{HAIRE1999291}
T.~Haire and C.~Langton.
\newblock Biot theory: a review of its application to ultrasound propagation
  through cancellous bone.
\newblock {\em Bone}, 24(4):291 -- 295, 1999.

\bibitem{huyghe1991two}
J.~M. Huyghe, D.~H. van Campen, T.~Arts, and R.~M. Heethaar.
\newblock A two-phase finite element model of the diastolic left ventricle.
\newblock {\em Journal of biomechanics}, 24(7):527--538, 1991.

\bibitem{jayaraman1983water}
G.~Jayaraman.
\newblock Water transport in the arterial wall--a theoretical study.
\newblock {\em Journal of biomechanics}, 16(10):833--840, 1983.

\bibitem{krishnan}
B.~Krishnan, D.~M., S.~Raja, and K.~Venkataramana.
\newblock Structural and {V}ibroacoustic {A}nalysis of {A}ircraft {F}uselage
  {S}ection with {P}assive {N}oise {R}educing {M}aterials: {A} {M}aterial
  {P}erformance {S}tudy.
\newblock 03 2015.

\bibitem{LOMBARD200490}
B.~Lombard and J.~Piraux.
\newblock Numerical treatment of two-dimensional interfaces for acoustic and
  elastic waves.
\newblock {\em Journal of Computational Physics}, 195(1):90 -- 116, 2004.

\bibitem{matuszy}
P.~J. Matuszyk and L.~F. Demkowicz.
\newblock Solution of coupled poroelastic/acoustic/elastic wave propagation
  problems using automatic hp-adaptivity.
\newblock {\em Comput. Methods Appl. Mech. Engrg.}, 281:54--80, 2014.

\bibitem{Morency2008}
C.~Morency and J.~Tromp.
\newblock {Spectral-element simulations of wave propagation in porous media}.
\newblock {\em Geophysical Journal International}, 175(1):301--345, 2008.

\bibitem{oomens1987mixture}
C.~Oomens, D.~Van~Campen, and H.~Grootenboer.
\newblock A mixture approach to the mechanics of skin.
\newblock {\em Journal of biomechanics}, 20(9):877--885, 1987.

\bibitem{PerugiaLDG}
I.~Perugia and D.~Sch{\"o}tzau.
\newblock An $hp$-analysis of the local discontinuous {G}alerkin method for
  diffusion problems.
\newblock {\em J. Sci. Comput.}, 17(1):561--571, 2002.

\bibitem{phillips2008coupling}
P.~J. Phillips and M.~F. Wheeler.
\newblock A coupling of mixed and discontinuous {G}alerkin finite-element
  methods for poroelasticity.
\newblock {\em Computational Geosciences}, 12(4):417--435, 2008.

\bibitem{quarteroni2014numerical}
A.~Quarteroni.
\newblock {\em Numerical models for differential problems}, volume~8.
\newblock Springer-Verlag Mailand, 2014.

\bibitem{riviere2003discontinuous}
B.~Rivi{\`e}re and M.~F. Wheeler.
\newblock Discontinuous finite element methods for acoustic and elastic wave
  problems.
\newblock {\em Contemporary Mathematics}, 329:271--282, 2003.

\bibitem{rockafellar1993lagrange}
R.~T. Rockafellar.
\newblock Lagrange multipliers and optimality.
\newblock {\em SIAM review}, 35(2):183--238, 1993.

\bibitem{Sidler2010}
R.~Sidler, J.~M. Carcione, and K.~Holliger.
\newblock {Simulation of surface waves in porous media}.
\newblock {\em Geophysical Journal International}, 183(2):820--832, 2010.

\bibitem{souzanchi2013tortuosity}
M.~Souzanchi, L.~Cardoso, and S.~Cowin.
\newblock Tortuosity and the averaging of microvelocity fields in
  poroelasticity.
\newblock {\em Journal of applied mechanics}, 80(2), 2013.

\bibitem{stein1970singular}
E.~M. Stein.
\newblock {\em Singular integrals and differentiability properties of
  functions}, volume~2.
\newblock Princeton University Press, 1970.

\bibitem{paulino}
C.~Talischi, G.~H. Paulino, A.~Pereira, and I.~F. Menezes.
\newblock Polymesher: a general-purpose mesh generator for polygonal elements
  written in {M}atlab.
\newblock {\em Structural and Multidisciplinary Optimization}, 45(3):309--328,
  2012.

\bibitem{TKWF2010}
S.~Triebenbacher, M.~Kaltenbacher, B.~Wohlmuth, and B.~Flemisch.
\newblock Applications of the mortar finite element method in vibroacoustics
  and flow induced noise computations.
\newblock {\em Acta Acustica united with Acustica}, 96(3):536--553(18), 2010.

\end{thebibliography}

\end{document}